\documentclass[11pt]{article}
\usepackage{amsmath, amssymb, amscd, amsthm, amsfonts, tikz}
\usepackage{graphicx}
\usepackage{hyperref}
\usepackage{float}
\usepackage{comment}
\usepackage{caption}
\usepackage{subcaption}
\usepackage{ytableau}
\usetikzlibrary{patterns}

\title{On Semisymmetric Height and a Multidimensional Generalization of Weighted Catalan Numbers}
\author{Ryota Inagaki \and Dimana Pramatarova}

\newtheorem{theorem}{Theorem}[section]
\newtheorem{corollary}[theorem]{Corollary}

\newtheorem{proposition}[theorem]{Proposition}

\newtheorem{remark}[theorem]{Remark}

\theoremstyle{remark}

\theoremstyle{definition}
\newtheorem{definition}[theorem]{Definition}
\newtheorem{example}[theorem]{Example}

\newtheorem{openproblem}[theorem]{Open Problem}

\allowdisplaybreaks

\begin{document}
\maketitle

\begin{abstract}
Weighted Catalan numbers are a class of weighted sums over Dyck paths. Well-studied for their arithmetic properties and applications to enumerative combinatorics, these numbers were recently generalized to the setting of $k$-dimensional Catalan numbers for $k \geq 2$. In this paper, we introduce the $k$-dimensional semisymmetric weighted Catalan numbers ($k$-dimensional SSWCNs), an alternative $k$-dimensional generalization, along with their variant, the $k$-dimensional $u$-bounded semisymmetric weighted Catalan numbers ($k$-dimensional $u$-bounded SSWCNs). We define these two classes of numbers using the notion of semisymmetric height, a new statistic on points in $\mathbb{Z}^k_{\geq 0}$ motivated by geometric symmetries of $k$-dimensional analogs of Dyck paths and of the fundamental Weyl chamber of type $A_{k-1}$. For our main results, we prove the eventual periodicity of $k$-dimensional SSWCNs and their $u$-bounded variants modulo a suitable integer $m$, and we derive formulas for several classes of $k$-dimensional $u$-bounded SSWCNs. Additionally, using semisymmetric height, we derive novel analogs in the $k$-dimensional setting of the integer sequence counting Dyck paths by height and of the Narayana numbers. We conclude the paper with a future direction for generalizing weighted Catalan numbers to the $k$-dimensional setting.
\end{abstract}
\renewcommand{\thefootnote}{\fnsymbol{footnote}} 

\footnotetext{\emph{2020 Mathematics Subject Classification}:  05A15, 05A19, 11B65.}

\footnotetext{\emph{Key words and phrases: }  Weighted Catalan Numbers, Multidimensional Catalan Numbers, Narayana Numbers} 

\maketitle

\section{Introduction}
\label{section:intro}
\subsection{Catalan Numbers}
The sequence of Catalan numbers $\displaystyle C_n = \frac{1}{n+1} \binom{2n}{n}$ is well studied in enumerative and algebraic combinatorics (A$000108$ in the OEIS \cite{oeis}). The $n$th Catalan number counts the number of Dyck paths of $2n$ steps. A Dyck path is a sequence of steps $(\vec{s}_1, \vec{s}_2, \dots, \vec{s}_{2n})$ in $\mathbb{Z}^2$ that starts at $(0,0)$ and ends at $(2n,0)$. It uses $n$ steps of the form $(1,1)$, which are called \textit{up-steps}, and $n$ of the form $(1, -1)$, which are called \textit{down-steps}. Each intermediate point $\vec{0}+\sum_{j=1}^i\vec{s}_j$ must remain on or above the $x$-axis. The Catalan numbers have many refinements, such as the Narayana numbers (A001263 in the OEIS \cite{oeis}), which count Dyck paths by the number of peaks, where a {\em peak} is an up-step immediately followed by a down-step. The Narayana number $N(n, \alpha)$ counts the Dyck paths of $2n$ steps with exactly $\alpha$ peaks.

\subsection{Weighted Catalan Numbers}\label{sec:WCN}
We next discuss the weighted Catalan numbers, introduced by Goulden and Jackson \cite{Goulden}, which generalize Catalan numbers via weighted sums over Dyck paths. This geometric perspective motivates our later $k$-dimensional generalizations, defined as weighted sums over $k$-dimensional analogs of Dyck paths.

Weighted Catalan numbers are defined as follows. For a sequence of integers $\vec{b} = (b_0,b_1,b_2,\ldots)$ and a Dyck path $P$ of $2n$ steps, the \textbf{weight} $wt_{\vec{b}}(P)$ is defined as the product $b_{h_1} b_{h_2}\cdots b_{h_n}$, where each $h_i$ is the $y$-coordinate of the starting point for the $i$th up-step in $P$. Accordingly, the \textbf{$n$th weighted Catalan number} is $$C_n^{\vec{b}} = \sum_P wt_{\vec{b}}(P),$$ with the sum over all Dyck paths of length $2n$. In particular, taking $\vec{b}=(1, 1, \dots)$, we recover the standard Catalan number $C_n$.
 
The periodicity of the weighted Catalan numbers modulo an integer has been extensively studied. In \cite{postnikov2000counting}, Postnikov conjectured that the sequence $C^{(1^2, 3^2, 5^2, \dots)}_n \pmod{3^r}$ is periodic with a period of $2\cdot3^{r-3}$. In 2013, Shader \cite{shader2013weighted} found an upper bound on the period of $C^{(1^2, 3^2, 5^2, \dots)}_n \pmod{3^r}$. In 2021, Gao and Gu \cite{gaogu} proved Postnikov's conjecture and found a necessary and sufficient condition for the eventual periodicity of $C_n^{\vec{b}}$ modulo an integer.

\subsection{Relation to the \texorpdfstring{$k$}{k}-dimensional Weighted Catalan Numbers}
In our previous work \cite{MR5034574}, we extended the study of weighted Catalan numbers to the $k$-dimensional case for $k \geq 2$, expanding upon the foundational results described above.

To achieve this extension, we employed $k$-dimensional balanced ballot paths, which are $k$-dimensional generalizations of Dyck paths. These paths are lattice paths on $\mathbb{Z}_{\geq 0}^k$  whose steps are standard unit vectors (i.e., the unit vectors $\vec{e}_1$, $\vec{e}_2$, etc). (We formally define balanced ballot paths in Section~\ref{sec:bbp}.) For $k=2$, these paths correspond to Dyck paths as each $\vec{e}_1$ step becomes $(1,1)$ and each $\vec{e}_2$ step becomes $(1,-1)$. Figure~\ref{fig:PathExample} shows an example of a 3-dimensional balanced ballot path.

We also defined a notion of height in \cite{MR5034574}. For $k\geq 2$, we defined the height of points in  $\mathbb{Z}_{\geq 0}^k$ as \[h_k(\vec{x}) = (k-1)x_1-x_2-x_3 -\dots -x_k.\] We then defined, for any $k$-dimensional balanced ballot path $P$, the height $h_k(P)$ of $P$ as the maximum value of $h_k(\vec{x})$ over all intermediate points of $P$. We chose this height function because it specializes at $k=2$ to the height of intermediate points on Dyck paths.

Using the above definition of height $h_k$, we defined \textbf{$k$-dimensional weighted Catalan numbers}, the {\em first} generalization of the weighted Catalan numbers to the $k$-dimensional setting, as a weighted sum over
$k$-dimensional balanced ballot paths. In \cite{MR5034574}, we computed examples and studied the arithmetic properties of these numbers. As additional applications of the height function $h_k$, we introduced analogs for the enumeration of Dyck paths by height and the Narayana numbers. 

\begin{figure}[H]
    \centering
    \includegraphics[width=0.40\linewidth]{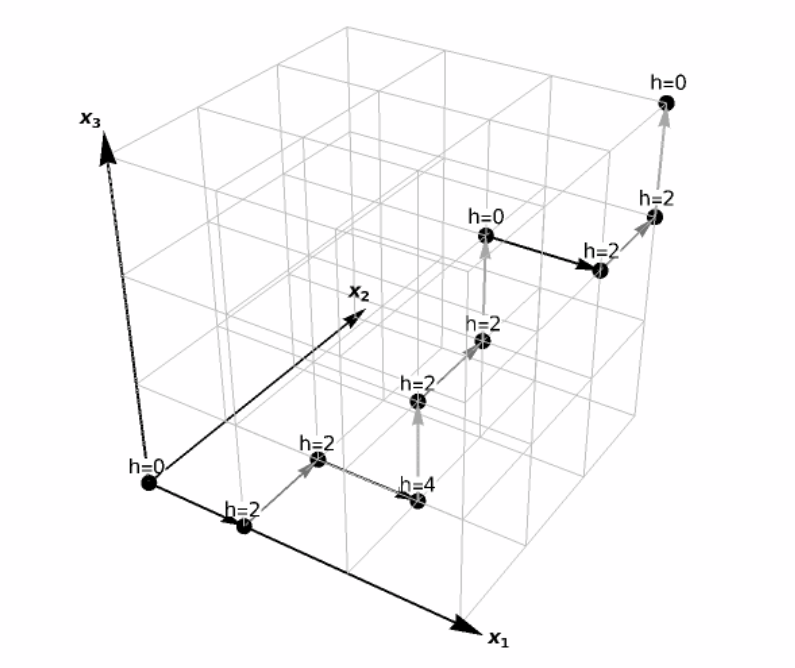}
    \caption{A $3$-dimensional balanced ballot path of $9$ steps. Each intermediate point of the path is labeled with its semisymmetric height $g_3(\vec{x}) = 2x_1-2x_3$.}
    \label{fig:PathExample}
\end{figure}

\subsection{Our Motivation and Contributions}

Although the height function $h_k$ from \cite{MR5034574} successfully generalized the notion of height of Dyck paths and enabled us to define $k$-dimensional weighted Catalan numbers, it does not exhibit symmetries of its specialization  $h_2(\vec{x}) = x_1- x_2$, the height of intermediate points in Dyck paths. More specifically, for any nonnegative integer $n$, the function $h_k$ is not invariant under the involution of $k$-dimensional points $\phi_{k, n}(\vec{x}) = (n-x_k, n-x_{k-1}, \dots, n-x_{1})$. (For example, observe that $h_3(\phi_{3, 1}(1, 0, 0)) = h_3((1, 1, 0)) = 1 \neq 2= h_3((1, 0, 0))$, whereas we have $h_2 \circ \phi_{2, n} = h_2$ for all nonnegative integers $n$.) Furthermore, it does not account for the symmetries of the fundamental Weyl chamber of type $A_{k-1}$, the region $\{\vec{x} \in \mathbb{Z}_{\geq 0}^k: x_1 \geq x_2 \geq \dots \geq x_k\}$ containing the intermediate points of balanced ballot paths. From the perspective of Weyl chambers, the map $\phi_{k, n}$ is also a natural reflection of coordinates. (We will more formally discuss the Weyl chamber in Section~\ref{sec:ssheight}.)

In light of this lack of symmetry exhibited by $h_k$, we introduce a new notion of height of points in $\mathbb{Z}^k$ and balanced ballot paths, the \textbf{semisymmetric height}. Following a suggestion made by our colleague Kenta Suzuki \cite{KSuzuki}, we define the semisymmetric height of the vector $\vec{x} \in \mathbb{Z}^k_{\geq 0}$ as follows: $$g_k(\vec{x}) = (k-1)x_1 + (k-3)x_2 + \cdots + (3-k)x_{k-1} + (1-k)x_k = \sum_{i=1}^k (k+1-2i)x_i.$$ 
 The semisymmetric height of a $k$-dimensional balanced ballot path $P$ is the maximum semisymmetric height attained by an intermediate point in $P$. This function, unlike $h_k$, is both invariant under $\phi_{k, n}$ for all $n$ and respects symmetries of the fundamental Weyl chamber of type $A_{k-1}$. We name this function the \textit{semisymmetric} height since the function $g_k$ exhibits the aforementioned symmetries, but is not a symmetric function in the sense of Stanley \cite[Chapter 7]{MR4621625}.

Using the semisymmetric height, we introduce an alternative generalization of weighted Catalan numbers to the setting of $k$-dimensional balanced ballot paths. More specifically, we introduce two new objects: the \textbf{$k$-dimensional semisymmetric weighted Catalan numbers ($k$-dimensional SSWCNs)}—the main $k$-dimensional generalization of weighted Catalan numbers for this paper—and their variant, the \textbf{$k$-dimensional $u$-bounded semisymmetric weighted Catalan numbers ($k$-dimensional $u$-bounded SSWCNs)}. Both are weighted sums over $k$-dimensional balanced ballot paths, with the latter being a sum over $k$-dimensional balanced ballot paths whose semisymmetric heights are at most $u$. (See Table~\ref{tab:notations} for a list of notations and links to definitions for $k$-dimensional SSWCNs and their variants.) The introduction of these two objects is the {\em central conceptual contribution}, around which we develop subsequent results.

For this paper's main results, we prove the eventual periodicity of the $k$-dimensional SSWCNs and their $u$-bounded variants under suitable conditions on $m$ (Theorems~\ref{thm:periodboundedkdim},~\ref{thm:period-weighted-kdim}, and ~\ref{thm:Productperiodicity}), and we derive recursive and closed formulas for several classes of $k$-dimensional $u$-bounded SSWCNs involving small $k$ and $u$. As special cases of formulas for $k$-dimensional $u$-bounded SSWCNs, we obtain numerous notable corollaries, such as the result that the number of $3$-dimensional balanced ballot paths of length $3n$ and semisymmetric height at most $4$ is the $n$th term of the OEIS sequence A015448 \cite{oeis} (Proposition~\ref{cor:OEISEquals}). While the proofs of our main results follow a similar approach to those in Sections 3 and 4 of \cite{MR5034574}, {\em the results themselves are new, as they describe the $k$-dimensional SSWCNs and their $u$-bounded variants.}

For our secondary results and additional applications of semisymmetric height, we use semisymmetric height to develop novel analogs of integer sequences. We introduce the \textbf{$k$-dimensional semisymmetric height triangle}, a novel $k$-dimensional analog of the sequence counting Dyck paths by their exact height (A080936 in the OEIS \cite{oeis}). By defining a new notion of peaks of balanced ballot paths via semisymmetric height, we also define the $k$-dimensional semisymmetric Narayana numbers, a new analog of the Narayana numbers. For the last two analogs, we compute numerical data showing that they differ from the corresponding sequences in Sulanke's paper \cite{multidimnarayana} and in our prior paper \cite{MR5034574}. We show that the $k$-dimensional semisymmetric height and Narayana triangles share a subsequence (Theorem~\ref{thm:RelatingTheTriangles}).

\subsection{Roadmap}

 In Section \ref{section:preliminaries}, we formally introduce definitions and basic properties used throughout the paper. We begin by defining balanced ballot paths, semisymmetric height, and semisymmetric weights of such paths. Then, we use those notions to introduce the main subjects of this paper: the $k$-dimensional SSWCNs and the $k$-dimensional $u$-bounded SSWCNs (Definitions~\ref{defn:kdimweighted} and~\ref{defn:k-dims-boundCat}). We then prove that the $k$-dimensional SSWCNs are distinct from the $k$-dimensional weighted Catalan numbers in \cite{MR5034574}. Afterward, we find bounds on semisymmetric heights of balanced ballot paths and introduce auxiliary variants of SSWCNs for use in Sections~\ref{sec:periodicity} and~\ref{sec:kuboundedweightedCatalan}.

In Section~\ref{sec:periodicity}, we prove periodicity of $k$-dimensional SSWCNs and $k$-dimensional $u$-bounded SSWCNs under suitable conditions.

In Section~\ref{sec:kuboundedweightedCatalan}, we prove formulas for several examples of $k$-dimensional $u$-bounded SSWCNs.

In Section~\ref{sec:heighttriangles}, we introduce the $k$-dimensional semisymmetric height triangle,  the integer sequence counting $k$-dimensional balanced ballot paths by their exact semisymmetric height. 

In Section~\ref{sec:Narayana}, we introduce and study the $k$-dimensional semisymmetric Narayana triangle. 

In Section~\ref{sec:FurtherDir}, we propose a future direction for discovering novel $k$-dimensional generalizations of weighted Catalan numbers.

\section{Definitions and Preliminaries}
\label{section:preliminaries}

In this section, we introduce the key definitions and preliminary results used throughout the paper.

In Section~\ref{sec:bbp}, we recall the definition of balanced ballot paths from \cite{MR5034574}. These paths serve as the underlying combinatorial objects for our constructions. We also introduce a new taxonomy for steps in such paths. In Section~\ref{sec:ssheight}, we introduce and motivate the semisymmetric height and define the associated notion of semisymmetric weight. In Section~\ref{sec:MainObject}, we introduce the main objects: the $k$-dimensional semisymmetric weighted Catalan numbers (SSWCNs) and their $u$-bounded variants ($k$-dimensional $u$-bounded SSWCNs). The definitions developed in the preceding subsections provide the necessary framework for these main objects. In Section~\ref{sec:Relation}, we show that these objects are distinct from the $k$-dimensional weighted Catalan numbers of \cite{MR5034574}. The remaining subsections introduce auxiliary definitions and results used in later proofs. 

Throughout, we introduce various notations; for clarity, Table~\ref{tab:notations} summarizes the notation for the $k$-dimensional SSWCNs and their key variants.
\subsection{Balanced Ballot Paths}\label{sec:bbp}
We first state definitions related to balanced ballot paths, a generalization of Dyck paths to $k$-dimensional space. These are our core combinatorial objects. Many definitions in this section are from our previous paper \cite{MR5034574} and are included here for completeness. Balanced ballot paths provide the basis for defining the semisymmetric height, as well as the two main objects of the paper: the $k$-dimensional SSWCNs and $k$-dimensional $u$-bounded SSWCNs, discussed next in Section~\ref{sec:ssheight}. We then define these main objects as weighted sums over balanced ballot paths, following the geometric perspective on weighted Catalan numbers, where the weights are the sums of the weights of the corresponding Dyck paths.

We begin with the basic terminology for paths, most of which was defined at the beginning of Section 2.1 of \cite{MR5034574}. In this paper, we assume all vectors are in the $k$-dimensional Euclidean space $\mathbb{R}^k$ for $k \geq 2$. The vector $\vec{e}_i$ is the $i$th standard basis vector in $\mathbb{R}^k$. A point in the $k$-dimensional lattice $\mathbb{Z}^k$ is a $k$-tuple $\vec{x} = (x_1, x_2, \ldots, x_k)$. A path of $\ell$ steps in the $k$-dimensional lattice is a finite sequence $P= (\vec{s}_1, \vec{s}_2, \dots, \vec{s}_{\ell})$ of vectors in $\mathbb{R}^k$ called steps. Given a path $P$ and $i \in \{0, 1, \dots, \ell\}$, the $i$th intermediate point of $P$ is $\vec{v}_i = \vec{0}+\sum_{j=1}^i \vec{s}_j$. The starting point of the $i$th step is the $(i-1)$th intermediate point, and the resulting point is the $i$th intermediate point.

We now formally state the definition of balanced ballot paths, the generalization of Dyck paths to the $k$-dimensional Euclidean space. We imported this definition from Section 2 of \cite{MR5034574}.

\begin{definition}
\label{defn:k-dim-path}
A $k-$\textbf{dimensional balanced ballot path} is a path $P = (\vec{s}_1, \vec{s}_2, \dots, \vec{s}_{kn})$ of $kn$ steps satisfying the following properties:
\begin{itemize}
\item (\textbf{balanced property}) For each $i \in \{1, 2, \dots, k\}$, exactly $n$ steps are equal to $\vec{e}i$.
\item (\textbf{ballot property}) For each intermediate point $\vec{x} = (x_1, x_2, \dots, x_k) = \sum_{j=1}^{i}\vec{s}_j$, we have $x_1 \geq x_2 \geq \dots \geq x_k$.
\end{itemize}
\end{definition}
\begin{example}\label{ex:3Dpathexample}
The path $P = (\vec{e}_1,\vec{e}_2,\vec{e}_1, \vec{e}_3,\vec{e}_2,\vec{e}_3, \vec{e}_1,\vec{e}_2,\vec{e}_3)$ is a $3$-dimensional balanced ballot path of $9$ steps. Figure~\ref{fig:PathExample} illustrates this path. In this section, we frequently refer to $3$-dimensional balanced ballot paths.
\end{example}
In particular, the $2$-dimensional balanced ballot paths are equivalent to Dyck paths \cite{StanleyCatalan}.  For any $2$-dimensional balanced ballot path $P$, we obtain the equivalent Dyck path by replacing each $\vec{e}_1$ step with $(1, 1)$ and each $\vec{e}_2$ step with $(1, -1)$.

\begin{remark}
We name the ballot property of $k$-dimensional balanced ballot paths in Definition~\ref{defn:k-dim-path} after ballot sequences (Chapter 7 of \cite{StanleyCatalan}), which are similarly defined sequences of integers.
\end{remark}

Using balanced ballot paths, we introduce the $k$-dimensional Catalan numbers, and we find weighted analogs.
\begin{definition}[A$060854$ in OEIS \cite{oeis}]\label{def:multidim}
For $n$ and $k$, the $n$-th $k$-dimensional Catalan number $C_{k, n}$ is the number of $k$-dimensional balanced ballot paths of length $kn$.
\end{definition}

The $n$th $k$-dimensional Catalan number equals $$C_{k,n} = \frac{0!1!\cdots(n-1)! \cdot (kn)!}{k!(k+1)!\cdots(k+n-1)!}.$$ In particular, we have $C_{2, n} = C_n$.
\begin{example}\label{ex:EnumerationRunning}
We find that $C_{3, 2} = 5$ because there are exactly five 3-dimensional balanced ballot paths of length 6. They are $(\vec{e}_1, \vec{e}_1, \vec{e}_2, \vec{e}_2, \vec{e}_3, \vec{e}_3)$, $(\vec{e}_1, \vec{e}_2, \vec{e}_1, \vec{e}_2, \vec{e}_3, \vec{e}_3)$, $(\vec{e}_1, \vec{e}_1, \vec{e}_2, \vec{e}_3, \vec{e}_2, \vec{e}_3)$, $(\vec{e}_1, \vec{e}_2, \vec{e}_3, \vec{e}_1, \vec{e}_2, \vec{e}_3)$, and $(\vec{e}_1, \vec{e}_2, \vec{e}_1, \vec{e}_3, \vec{e}_2, \vec{e}_3)$. Figure~\ref{fig:five-balanced-ballot-paths} illustrates these balanced ballot paths. These paths will appear later in Example~\ref{ex:computation}.
\begin{figure}[H]
\centering

\begin{minipage}{0.32\textwidth}
    \centering
    \includegraphics[width=0.85\linewidth]{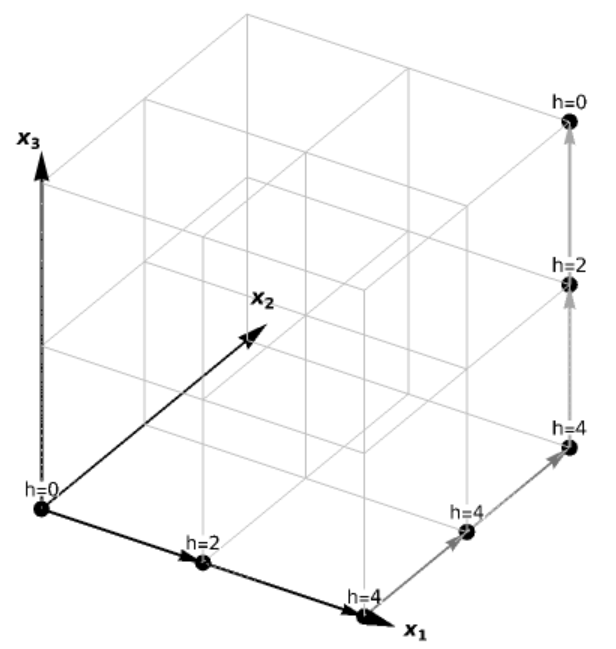}
    \caption*{$(\vec{e}_1, \vec{e}_1, \vec{e}_2, \vec{e}_2, \vec{e}_3, \vec{e}_3)$.}
\end{minipage}
\hfill
\begin{minipage}{0.32\textwidth}
    \centering
    \includegraphics[width=0.85\linewidth]{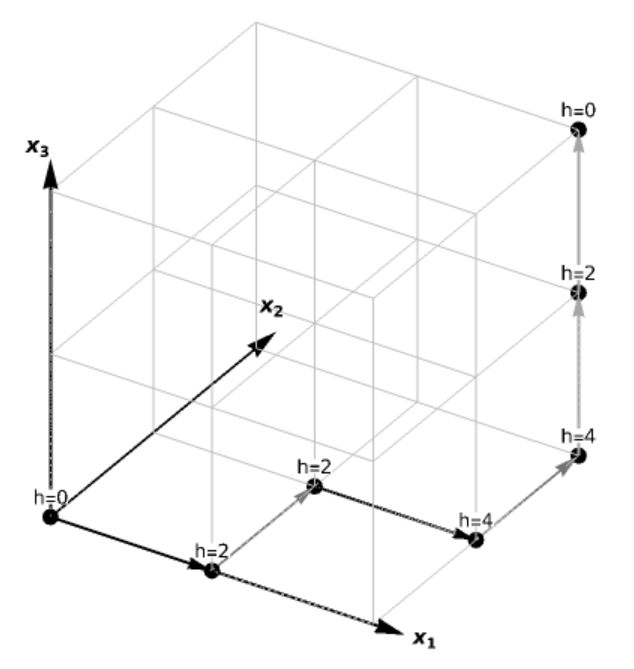}
    \caption*{$(\vec{e}_1, \vec{e}_2, \vec{e}_1, \vec{e}_2, \vec{e}_3, \vec{e}_3)$.}
\end{minipage}
\hfill
\begin{minipage}{0.32\textwidth}
    \centering
    \includegraphics[width=0.85\linewidth]{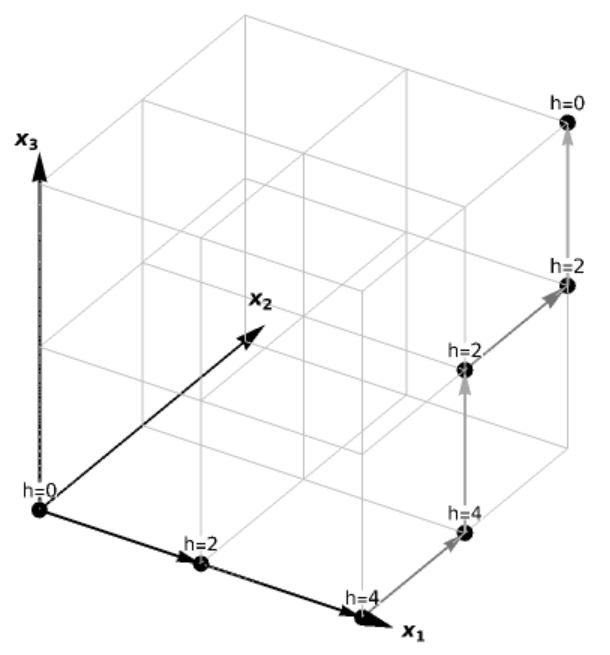}
    \caption*{$(\vec{e}_1, \vec{e}_1, \vec{e}_2, \vec{e}_3, \vec{e}_2, \vec{e}_3)$.}
\end{minipage}

\vspace{0.8em}

\begin{minipage}{0.32\textwidth}
    \centering
    \includegraphics[width=0.85\linewidth]{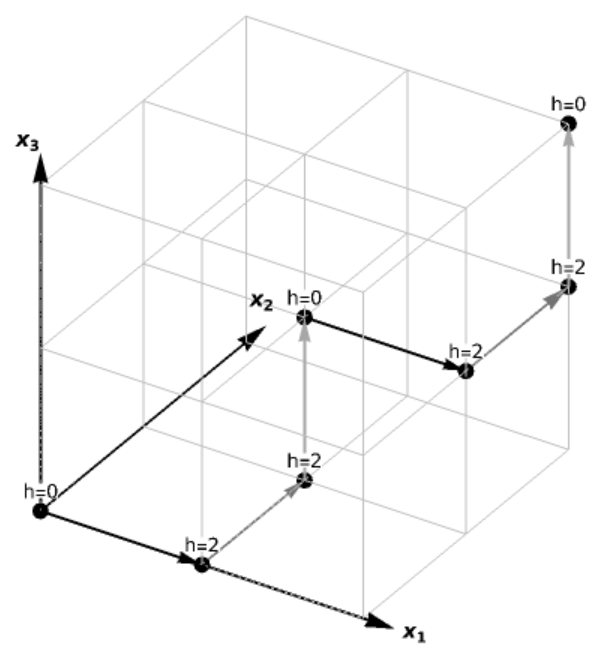}
    \caption*{$(\vec{e}_1, \vec{e}_2, \vec{e}_3, \vec{e}_1, \vec{e}_2, \vec{e}_3)$.}
\end{minipage}
\hfill
\begin{minipage}{0.32\textwidth}
    \centering
    \includegraphics[width=0.85\linewidth]{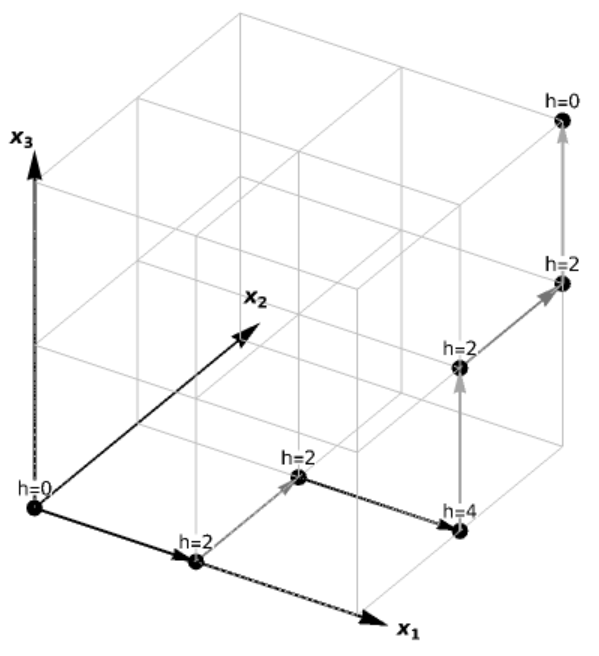}
    \caption*{$(\vec{e}_1, \vec{e}_2, \vec{e}_1, \vec{e}_3, \vec{e}_2, \vec{e}_3)$.}
\end{minipage}

\caption{The five $3$-dimensional balanced ballot paths of length $6$, with intermediate points labeled by their semisymmetric height.}
\label{fig:five-balanced-ballot-paths}

\end{figure}
\end{example}

\begin{remark}
The $n$th $k$-dimensional Catalan number is also equal to the number of standard Young tableaux of shape $k \times n$ (See Chapter 7 of Stanley \cite{MR4621625}).
\end{remark}

We also recall the definition of sub-ballot paths from \cite{MR5034574}. The sub-ballot paths will be crucial for our proofs in Sections~\ref{sec:periodicity} and~\ref{sec:kuboundedweightedCatalan}.
\begin{definition}
\label{defn:k-dim-pathsubballot}
Let $\vec{a}, \vec{a}' \in \mathbb{Z}_{\geq 0}^{k}$. A \textbf{$k$-dimensional sub-ballot path from $\vec{a}$ to $\vec{a}'$} is a path $P =(\vec{s}_1, \vec{s}_2, \dots, \vec{s}_{\ell})$ satisfying the following properties:
\begin{itemize}
\item Each step $\vec{s}_i$ is in ${\vec{e}_1, \vec{e}_2, \dots, \vec{e}_k}$.
\item For each $i \in \{0,1, \dots, \ell\}$, the point $\displaystyle (x_1, x_2, \dots, x_k) = \vec{a}+\sum_{j=1}^i\vec{s}_j$ satisfies the property $x_1 \geq x_2 \geq \dots \geq x_k\geq 0$.
\item We have $\displaystyle \vec{a} + \sum_{j=1}^{\ell}\vec{s}_j = \vec{a}'$.
\end{itemize}
For a $k$-dimensional sub-ballot path $P = (\vec{s}_1, \vec{s}_2, \dots, \vec{s}_{\ell})$ from $\vec{a}$ to $\vec{a}'$ and $i \in \{1, 2, \dots, \ell\}$, the \textbf{starting point of the $i$th step} is $\vec{a} + \sum_{j=1}^{i-1}\vec{s}_j$. The \textbf{resulting point of the $i$th step} is $\vec{a} + \sum_{j=1}^{i}\vec{s}_j$.
\end{definition}
In particular, $k$-dimensional balanced ballot paths with $kn$ steps are equivalent to $k$-dimensional sub-ballot paths from $(0, 0, \dots, 0)$ to $(n, n, \dots, n)$.

For both balanced ballot and sub-ballot paths, we define semisymmetric up-steps, down-steps, and neutral steps: a new taxonomy of steps that generalizes the up-steps and down-steps of Dyck paths. Our classification of steps will allow us to define semisymmetric weight and, consequently, the main objects of our paper in Section~\ref{sec:MainObject}. We also apply the taxonomy of steps to define an analog of peaks for $k$-dimensional balanced ballot paths in Section~\ref{sec:Narayana}.

\begin{definition}
\label{defn:up-step-down-step}
We call a \textbf{semisymmetric up-step} any step in the direction of $\vec{e}_i$, where $i \in \{1,2,\ldots, \left\lfloor \frac{k}{2}\right\rfloor \}.$ Similarly, a \textbf{semisymmetric down-step} is any step equal to $\vec{e}_j,$ where $j \in \{k , k-1, \dots, k - \left\lfloor \frac{k}{2} \right\rfloor+1\}$. Finally, for odd $k$, we define the \textbf{semisymmetric neutral step}, which is the step in the direction of $\vec{e}{\left \lfloor\frac{k}{2}\right\rfloor+1}$. Semisymmetric neutral steps do not exist for even $k$.
\end{definition}
We refer to these steps as semisymmetric up-steps and down-steps because they increase and decrease the semisymmetric height $g_k$, respectively, in analogy with the role of up-steps and down-steps in Dyck paths.

In particular, a $k$-dimensional balanced ballot path of length $kn$ contains $\left\lfloor \frac{k}{2}\right\rfloor n$ semisymmetric up-steps and $\left\lceil \frac{k}{2}\right\rceil n$ steps that are either neutral or down-steps.
\subsection{The Semisymmetric Height and Weight}\label{sec:ssheight}
In this subsection, we introduce and motivate the semisymmetric height and then define the semisymmetric weight, which will be used in Section~\ref{sec:MainObject} to define the $k$-dimensional SSWCNs and their $u$-bounded variants.

Recall the height function $$h_k(\vec{x}) = (k-1)x_1 -x_2-\dots-x_k$$ from \cite{MR5034574}. The height function $h_k$ is a key tool for $k$-dimensional balanced ballot paths. However, for $k \geq 3$, this function does not respect a natural symmetry of these paths, namely the involution $\phi_{k,n}$ defined below. Since this symmetry arises naturally from both the geometry of balanced ballot paths of the fundamental Weyl chamber of type $A_{k-1}$, it is desirable to work on a height function that is invariant under $\phi_{k,n}$. This motivates the introduction of the semisymmetric height.

To that end, consider the map $\phi_{k, n}(x_1,\ldots,x_k)= (n-x_k,\ldots,n-x_1)$. The map is significant for two reasons. First, it is an involution on $k$-dimensional balanced ballot paths of length $kn$. More specifically, for any sequence of intermediate points $\vec{v}_0, \vec{v}_1, \vec{v}_2, \dots, \vec{v}_{kn}$ of a $k$-dimensional balanced ballot path $P$ of length $kn$, the sequence $\phi{k, n}(\vec{v}_{kn}), \phi_{k, n}(\vec{v}_{kn-1}), \dots, \phi_{k, n}(\vec{v}_0)$ is a sequence of intermediate points corresponding to some other balanced ballot path. Second, the map $\phi{k, n}$ is a natural reflection of points in the fundamental Weyl Chamber of type $A_{k-1}$. In the theory of reflection and Coxeter groups (see \cite{MR1066460} for a formal exposition), the \textbf{fundamental Weyl chamber of type $A_{k-1}$} is the region $\{\vec{x} \in \mathbb{Z}_{\geq0 }^k: x_1 \geq x_2 \geq \dots \geq x_k\}$.

The height function for $k=2$, $h_2$, is invariant under $\phi_{2, n}$. Indeed, we have $(h_2 \circ \phi_{2, n})(x, y) = h_2(n-y, n-x)= x-y=h_2(x, y)$. On the other hand, for $k \geq 3$, the invariance does not hold. This can be seen by computing $(h_k \circ \phi_{k, n})(x_1, x_2, \dots, x_k)  = x_1+x_2+\dots+x_{k-1}-(k-1)x_{k} \neq h_k(x_1, x_2, \dots, x_k)$. In light of this lack of symmetry exhibited by $h_k$, we define a new notion of height that is invariant under $\phi_{k, n}$.
\begin{definition}
\label{def:height-new}
For points in $\mathbb{Z}{\geq 0}^k$, the \textbf{semisymmetric height} function $g_k: \mathbb{Z}_{\geq 0}^k \to \mathbb{R}$ is $$g_k(\vec{x}) = (k-1)x_1 + (k-3)x_2 + \cdots + (3-k)x_{k-1} + (1-k)x_k = \sum_{i=1}^k (k+1-2i)x_i.$$ For any intermediate point $\vec{x}$ in a $k$-dimensional balanced ballot path $P$, the \textbf{semisymmetric height of the intermediate point} $\vec{x}$ is $g_k(\vec{x})$.  For any $k$-dimensional balanced ballot path $P = (\vec{s}_1, \vec{s}_2, \dots, \vec{s}_{kn})$, the semisymmetric height of $P$ is denoted by $$g_k(P) = \max_{i \in \{0,1, \dots, kn\}}{g_k(\vec{0}+\sum_{j=1}^i\vec{s}i)}.$$ Similarly, for $\vec{a}, \vec{a}^{\prime} \in \mathbb{Z}_{\geq 0}^k $ and any $k$-dimensional sub-ballot path $P = (\vec{s}_1, \vec{s}_2, \dots, \vec{s}_{\ell})$ from $\vec{a}$ to $\vec{a}^{\prime}$, the \textbf{semisymmetric height of sub-ballot path $P$} is $$g_k(P) = \max_{i \in \{0,1, \dots, \ell\}}{g_k(\vec{a}+\sum_{j=1}^i\vec{s}_i)}.$$ As a convention, the semisymmetric height of the empty balanced ballot path $P = \emptyset$ is $0$.
\end{definition}

The definition of the semisymmetric height function was suggested by Kenta Suzuki~\cite{KSuzuki}.

\begin{example}\label{ex:Example3Dbalancedballot}
Consider the $3$-dimensional balanced ballot path $P = (\vec{e}_1, \vec{e}_1, \vec{e}_2, \vec{e}_2, \vec{e}_3, \vec{e}_3)$. We illustrate this path in Figure~\ref{fig:3ddyck}, where each intermediate point $\vec{x}$ is labeled with its semisymmetric height $g_k(\vec{x})$.
\begin{figure}[H]
\centering
\includegraphics[width=0.35\linewidth]{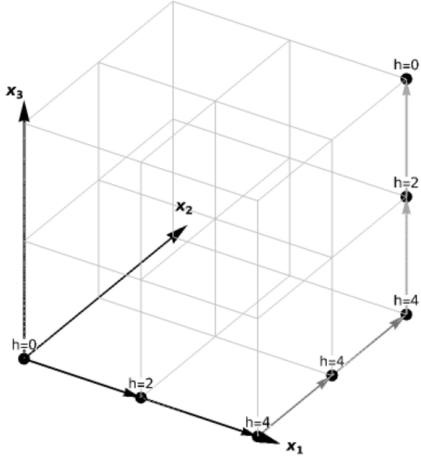}
\caption{A 3-dimensional balanced ballot path. We label each intermediate point by its semisymmetric height $g_3(\vec{x}) = 2x_1-2x_3$.}
\label{fig:3ddyck}
\end{figure}
\end{example}

The semisymmetric height is a natural generalization of the Dyck path height. Indeed, in the case $k=2$, we have
\[g_2(\vec{x}) = x_1 - x_2,\]
which coincides with the usual height function for intermediate points of Dyck paths.

We show that the semisymmetric height $g_k$ is designed to respect the symmetries of the fundamental Weyl chamber of type $A_{k-1}$. In the $k$-dimensional Euclidean space, the region where the balanced ballot paths reside, $\{\vec{x} \in \mathbb{Z}_{\geq0 }^k: x_1 \geq x_2 \geq \dots \geq x_k\}$, can be viewed as the fundamental Weyl chamber of type $A_{k-1}$. The involution $\phi_{k, n}$ is a natural reflection of points in $C$ whose coordinates are at most $n$.

We can formalize this by using the positive roots of type $A_{k-1}$, namely $\{\vec{e}_j - \vec{e}_{j'}: j < j' \}$. This naturally yields the coefficients of $g_k(\vec{x})$, via the half-sums of positive roots $\frac{1}{2}(\sum_{j=1}^{i-1}(\vec{e}_j - \vec{e}i) +\sum_{j=i+1}^k(\vec{e}_i - \vec{e}_j))$.

Finally, we observe the symmetries in the formula for $g_k$. For $i \in \{1, 2, \dots, k\}$, the coefficient of $x_i$ in $g_k(\vec{x})$ is the negation of that of $x_{k+1-i}$. This is a symmetry not present in the expression of $h_k(\vec{x})$. Because $g_k$ exhibits this and the previous noted symmetries but is not a symmetric function in the classical sense (c.f., Chapter 7 of Stanley \cite{MR4621625}), we call $g_k$ the \textit{semisymmetric} height.

Just as counting Dyck paths below a given height refines the Catalan numbers (sequence A080934 in the OEIS \cite{oeis}), we introduce an analogous refinement of balanced ballot paths using semisymmetric height. In Section~\ref{sec:kuboundedweightedCatalan}, we derive closed formulas for several cases of these numbers, and in Section~\ref{sec:heighttriangles}, they underlie the $k$-dimensional semisymmetric height triangles.
\begin{definition}
\label{def:bounded-and-height}
We define the \textbf{$n$th $k$-dimensional $u-$bounded semisymmetric Catalan number} $\widehat{C}_{k,u,n}$ as the number of balanced ballot paths of length $kn$ with semisymmetric height at most $u$.
\end{definition}
\begin{example}
We compute $\widehat{C}_{3, 2, 2}$. Out of the $C_{3 , 2}=5$ $3$-dimensional balanced ballot paths of $6$, there is only $1$ balanced ballot path whose semisymmetric height is at most $2$: the path $P = (\vec{e}_1, \vec{e}_2, \vec{e}_3, \vec{e}_1, \vec{e}_2, \vec{e}_3)$. We obtain this fact because all other paths have at least two semisymmetric up-steps before their first semisymmetric down-step. Therefore, $\widehat{C}_{3, 2, 2}=1$.
\end{example}

The $k$-dimensional balanced ballot paths with semisymmetric height at most $u$ can be visualized as balanced ballot paths whose intermediate points are between the hyperplanes $$ x_1 = \frac{-(k-3)x_2 + \cdots + (1-k)x_k}{k-1} \mbox{ and } x_1 = \frac{-(k-3)x_2 + \cdots + (1-k)x_k + u}{k-1}. $$

Having defined the semisymmetric height, we introduce a new weight on $k$-dimensional balanced ballot paths. Since the weights of Dyck paths were defined as products of contributions from up-steps, we define them as products of contributions from steps. Unlike the weights of Dyck paths introduced in Section~\ref{section:intro}, which depend only on up-steps, the semisymmetric weight is the product of contributions from all steps. That way, our definition of semisymmetric weight generalizes variants of Dyck path weights that incorporate contributions from down-steps, such as those considered in An's thesis \cite{ansthesis}.

\begin{definition}[Semisymmetric Weight]
\label{defn:ssweight}  Let $P$ be a $k$-dimensional ballot path of length $kn$. Let $\vec{b} = (b_0, b_1, \dots)$ and $ \vec{c}=(c_0, c_1, \dots)$ be infinite sequences of integers. The \textbf{semisymmetric weight of $P$ with respect to $\vec{b}$ and $\vec{c}$} is $$sswt_{\vec{b}, \vec{c}}(P)= \left(\prod_{i=1}^{\lfloor \frac{k}{2} \rfloor n}b_{u_i}\right) \cdot \left(\prod_{j=1}^{(\lceil \frac{k}{2} \rceil) n}c_{u_{j}'}\right),$$ where $u_i$ is the semisymmetric height of the starting point of $i$th semisymmetric up-step of $P$ and $u_{j}'$ is the semisymmetric height of the resulting point of $j$th step of $P$ that is either a semisymmetric neutral step (in the case of odd $k$) or down-step.

For abbreviation, we define $$sswt_{\vec{b}}(P) = sswt_{\vec{b}, (1, 1, \dots)}(P).$$
\end{definition}

\begin{example}\label{ex:ComputeSSWT2}
Consider sequences of integers $\vec{b} = (b_0,b_1, \dots)$ and $\vec{c} = (c_0, c_1, \dots)$ along with the balanced ballot path $P = (\vec{e}_1, \vec{e}_2, \vec{e}_1, \vec{e}_3, \vec{e}_2, \vec{e}_3, \vec{e}_1, \vec{e}_2, \vec{e}_3)$ from Example~\ref{ex:3Dpathexample}. We compute the semisymmetric weight $sswt{\vec{b}, \vec{c}}(P)$ of the path. The semisymmetric up-steps start at semisymmetric heights $ 0$, $ 2$, and $0$, and contribute $b_0$, $b_2$, and $b_0$, respectively. The semisymmetric down-steps end at semisymmetric height $2$, $0$, and $0$, and contribute factors $c_2$, $c_0$, and $c_0$, respectively. All semisymmetric neutral steps start at a semisymmetric height of $2$, and each contributes a factor of $c_2$. This yields $sswt_{\vec{b}, \vec{c}}(P) = b_0 b_2 c_2^4c_0^2$.
\end{example}
\subsection{The \texorpdfstring{$k$}{k}-dimensional Semisymmetric Weighted Catalan Numbers (\texorpdfstring{$k$}{k}-dimensional SSWCNs)}\label{sec:MainObject}
Equipped with the notion of semisymmetric weight on balanced ballot paths, we are ready to introduce the two main objects of the paper: the $k$-dimensional semisymmetric weighted Catalan numbers ($k$-dimensional SSWCNs—this paper’s main $k$-dimensional generalization of weighted Catalan numbers—and the $k$-dimensional $u$-bounded semisymmetric weighted Catalan numbers ($k$-dimensional $u$-bounded SSWCNs).

We define the first main object of our paper, the $k$-dimensional semisymmetric weighted Catalan numbers (SSWCNs), as follows.

\begin{definition}\label{defn:kdimweighted}
Let $k$ and $n$ be positive integers. Let  $\vec{b} = (b_0, b_1, \dots)$ and $ \vec{c} = (c_0, c_1, \dots)$ be infinite sequences of integers. We define the $n$th \textbf{$k$-dimensional semisymmetric weighted Catalan number ($k$-dimensional SSWCN)} \[
\widehat{C}_{k,n}^{\vec{b}, \vec{c}} = \sum_P sswt_{\vec{b},\vec{c}}(P),
\] where the summation is over all $k$-dimensional balanced ballot paths of length $kn$. The quantity $sswt_{\vec{b}, \vec{c}}(P)$ is the semisymmetric weight defined in Definition~\ref{defn:ssweight}.
\end{definition}

In particular, we have $\widehat{C}_{k, n}^{(1, 1, \dots), (1, 1, \dots)} = C_{k, n}$, the $n$th $k$-dimensional Catalan number.

We use $\widehat{C}$ to denote $k$-dimensional SSWCNs and related sequences, distinguishing them from the $k$-dimensional weighted Catalan numbers, previously denoted by C in our earlier work \cite{MR5034574}. This notation distinguishes between these two generalizations and reinforces the difference in their underlying concepts of weight, as elaborated in Section~\ref{sec:ssheight}.

We introduce abbreviations for $k$-dimensional SSWCNs and semisymmetric weights.

\begin{definition}[Abbreviations for $\vec{c} = (1, 1, \dots)$]\label{def:shorthand1}
For sequences of integers $\vec{b} = (b_0, b_1, \dots)$, the \textbf{$n$th $k$-dimensional SSWCN with respect to $\vec{b}$} is $$\widehat{C}_{k,n}^{\vec{b}} = \widehat{C}_{k,n}^{\vec{b},(1,1,\dots)}.$$ For any sub-ballot path or balanced ballot path $P$, the \textbf{semisymmetric weight of $P$ with respect to $\vec{b}$} is  $$sswt_{\vec{b}}(P) = sswt_{\vec{b}, (1,1, \dots)}(P).$$
\end{definition}

In particular, observe that $\widehat{C}_{2, n}^{\vec{b}} = C_n^{\vec{b}}$, i.e., the weighted Catalan number $C_n^{\vec{b}}$, as defined in Section~\ref{sec:WCN}, is equal to the 2-dimensional SSWCN $\widehat{C}_{2, n}^{\vec{b}}$.

\begin{example}\label{ex:computation}
Suppose $\vec{b} = (b_0, b_1, \dots)$ and $\vec{c} = (c_0, c_1, \dots)$ are infinite sequences of integers. We compute $\widehat{C}^{\vec{b}, \vec{c}}{3, 2}$ by finding the semisymmetric weights of all 3-dimensional balanced ballot paths of length $6$, which we listed in Example~\ref{ex:EnumerationRunning}. Table~\ref{tab:ballotPathTablewWeights} lists these paths and their semisymmetric weights. From this data, we conclude:
$\widehat{C}^{\vec{b}, \vec{c}}{3, 2} =  b_0b_2c_4^2c_2c_0 + b_0b_2c_4c_2^2c_0 + b_0b_2c_4c_2^2c_0 + b_0^2c_2^2c_0^2 + b_0b_2c_2^3c_0.$

\begin{table}[H]
\centering
\begin{tabular}{|c|c|}
\hline  balanced ballot path & semisymmetric weight of balanced ballot path with respect to $(\vec{b}, \vec{c})$ \\ \hline
$(\vec{e}_1, \vec{e}_1, \vec{e}_2, \vec{e}_2, \vec{e}_3, \vec{e}_3)$ & $b_0b_2c_4^2c_2c_0$        \\ \hline $(\vec{e}_1, \vec{e}_2, \vec{e}_1, \vec{e}_2, \vec{e}_3, \vec{e}_3)$ & $b_0b_2c_4c_2^2c_0$ \\ \hline $(\vec{e}_1, \vec{e}_1, \vec{e}_2, \vec{e}_3, \vec{e}_2, \vec{e}_3)$ & $b_0b_2c_4c_2^2c_0$ \\ \hline $(\vec{e}_1, \vec{e}_2, \vec{e}_3, \vec{e}_1, \vec{e}_2, \vec{e}_3)$ & $b_0^2c_2^2c_0^2$ \\ \hline $(\vec{e}_1, \vec{e}_2, \vec{e}_1, \vec{e}_3, \vec{e}_2, \vec{e}_3)$ & $b_0b_2c_2^3c_0$ \\ \hline
\end{tabular}
\caption{3-dimensional ballot paths of length $6$ with respective weights.}
\label{tab:ballotPathTablewWeights}
\end{table}
\end{example}

In parallel with the $k$-dimensional SSWCNs, we also introduce a variant: the $k$-dimensional $u$-bounded SSWCNs. This forms the second main object of the paper.

\begin{definition}
\label{defn:k-dims-boundCat}
Let $n$ and $k$ be positive integers.  Let $\vec{b} = (b_0,b_1, \dots)$ and $\vec{c}=(c_0, c_1, \dots)$ be infinite sequences of integers. We define the $n$th \textbf{$k$-dimensional $u$-bounded SSWCN} to be $$\widehat{C}_{k, u, n}^{\vec{b}, \vec{c}} = \sum_{P}sswt_{\vec{b}, \vec{c}}(P),$$ where the sum is over all balanced ballot paths $P$ whose semisymmetric heights are at most $u$.
\end{definition}
\begin{definition}[Abbreviation for $\vec{c} = (1, 1, \dots)$]\label{def:abbrv2}For shorthand, the $n$th {$k$-dimensional $u$-bounded SSWCN with respect to $\vec{b}$} is $$\widehat{C}_{k, u, n}^{\vec{b}} = \widehat{C}_{k, u, n}^{\vec{b}, (1, 1, \dots)}.$$
\end{definition}
In particular, we have $\widehat{C}_{k, u, n} = \widehat{C}{k, u, n}^{(1, 1, \dots )}$.

We end the subsection with a table for notation for the $k$-dimensional SSWCNs and their variants.
\begin{table}[H]
\centering
\begin{tabular}{|c|c|c|}
\hline Notation & Description & Definition No. \\ \hline
$\widehat{C}_{k, u, n}$& the $n$th $k$-dimensional $u$-bounded semisymmetric Catalan number. &  Def.~\ref{def:bounded-and-height}. \\  \hline $\widehat{C}_{k, n}^{\vec{b}, \vec{c}}$ &  the $n$th $k$-dimensional SSWCN with respect to $\vec{b}$ and $\vec{c}$. & Def.~\ref{defn:kdimweighted}.\\  \hline $\widehat{C}_{k, n}^{\vec{b}}$ &  the $n$th $k$-dimensional SSWCN with respect to $\vec{b}$ ( and $\vec{c}  = (1, 1, \dots)$). & Def.~\ref{def:shorthand1}. \\ \hline $\widehat{C}_{k, u, n}^{\vec{b},  \vec{c}}$ &  the $n$th $k$-dimensional $u$-bounded SSWCN. & Def.~\ref{defn:k-dims-boundCat}. \\ \hline $\widehat{C}_{k, u, n}^{\vec{b}}$ &  the $n$th $k$-dimensional $u$-bounded SSWCN for $\vec{c} = (1, 1, \dots)$. & Def.~\ref{def:abbrv2}. \\ \hline
\end{tabular}
\caption{Notation table for $k$-dimensional SSWCNs and their variants.}
\label{tab:notations}
\end{table}

\subsection{Relation to Our Prior Generalization of Weighted Catalan Numbers}\label{sec:Relation}
In this section, we show that the $k$-dimensional SSWCNs are distinct from the $k$-dimensional weighted Catalan numbers from \cite{MR5034574}.

To clearly distinguish the two generalizations of weighted Catalan numbers, we begin by restating the definition of $k$-dimensional weighted Catalan numbers from \cite{MR5034574}. Given a sequence of integers $\vec{b} = (b_0, b_1, \dots)$, the \textbf{weight} $wt_{\vec{b}}(P)$ of a balanced ballot path $P$ is defined as $wt_{\vec{b}}(P) = b_{u_1}b_{u_2}\dots b_{u_n}$, where $u_i$ denotes the height of the starting point of the $i$th step, equal to $\vec{e}_1$. The \textbf{$n$th $k$-dimensional weighted Catalan number} is $C_{k, n}^{\vec{b}}= \sum_{P}wt_{\vec{b}}(P)$, where the sum ranges over all balanced ballot paths $P$ of $kn$ steps. Similarly, the \textbf{$n$th $k$-dimensional $u$-bounded weighted Catalan number} is $C_{k, u, n}^{\vec{b}}= \sum_{P}wt_{\vec{b}}(P)$, where the sum is restricted to all balanced ballot paths $P$ of $kn$ steps with $h_k(P) \leq u$.

\begin{example}
For the balanced ballot path $(\vec{e}_1, \vec{e}_1, \vec{e}_2, \vec{e}_2, \vec{e}_3, \vec{e}_3)$, we have $wt_{\vec{b}}((\vec{e}_1, \vec{e}_1, \vec{e}_2, \vec{e}_2, \vec{e}_3, \vec{e}_3)) = b_0b_2$. The steps in the $\vec{e}_1$ direction start at height $0$ and $2$. The $3$-dimensional weighted Catalan number is $C_{3, 2}^{\vec{b}} = 4b_0b_2 + b_0^2$. This is the sum of weights of all 3-dimensional balanced ballot paths of length $6$, which are listed in Example~\ref{ex:EnumerationRunning}.
\end{example}

Although both $k$-dimensional SSWCNs and weighted Catalan numbers are weighted sums over balanced ballot paths, the definitions of weight are fundamentally different. The semisymmetric weight is computed as a product of stepwise contributions along the entire path. In contrast, $wt_{\vec{b}}$ only uses contributions from steps that are equal to $\vec{e}_1$. Additionally, the semisymmetric weight relies on the semisymmetric height $g_k$, a concept distinct from the height $h_k$.

\begin{example}\label{ex:WhyNotTheSame}
Even for small values of $n$ and $k$, these differences affect the outcomes for $k$-dimensional SSWCNs and weighted Catalan numbers. To illustrate, we show that $C_{4, 1}^{\vec{b}} \neq \widehat{C}_{4, 1}^{\vec{b}}$. Consider the path $P = (\vec{e}_1, \vec{e}_2, \vec{e}_3, \vec{e}_4)$, which is the unique 4-dimensional balanced ballot path of length $4$. In this case, we find $\widehat{C}_{4, 4, 1}^{\vec{b}} = \widehat{C}_{4, 1}^{\vec{b}} = sswt_{\vec{b}, (1, 1, \dots)}(P) = b_0 b_3$, whereas $C_{4, 1}^{\vec{b}} = wt_{\vec{b}}(P) = b_0$.
\end{example}
The same two features, the use of the semisymmetric height $g_k$ and the fact that the semisymmetric weight incorporates contributions from all steps, also distinguish the $k$-dimensional $u$-bounded SSWCNs from the $k$-dimensional $u$-bounded weighted Catalan numbers in \cite{MR5034574}.
\subsection{Bounds on Semisymmetric Heights of \texorpdfstring{$k$}{k}-dimensional Balanced Ballot Paths}\label{sec:BasicObservations}

In this subsection, we determine the minimum and maximum values of the semisymmetric height for $k$-dimensional balanced ballot paths of nonzero length. These bounds serve two purposes in the paper. First, when we compute examples of $k$-dimensional $u$-bounded SSWCNs in Section~\ref{sec:kuboundedweightedCatalan}, they justify why we only consider $u \geq \left \lceil \frac{k}{2} \right\rceil \cdot \left \lfloor \frac{k}{2} \right\rfloor$. Second, these bounds are central in Section~\ref{sec:heighttriangles}, where we study the $k$-dimensional semisymmetric height triangle, which enumerates such paths according to their semisymmetric height.

We begin by finding the maximum semisymmetric height attainable by a $k$-dimensional balanced ballot path of length $kn$.

\begin{proposition}\label{prop:maxheight}
Let $n > 0$. Then, the largest possible semisymmetric height that is attainable by $k$-dimensional balanced ballot paths of length $kn$ is $\left\lfloor \frac{k}{2}\right\rfloor \left\lceil \frac{k}{2} \right\rceil n$.
\end{proposition}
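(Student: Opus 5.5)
Write $p=\lfloor k/2\rfloor$ and $q=\lceil k/2\rceil$, so $p+q=k$. The proof has two parts: an upper bound on $g_k(\vec{x})$ for every intermediate point $\vec{x}$ of a balanced ballot path of length $kn$, and an explicit path attaining that bound.

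\textbf{Upper bound.} Every intermediate point satisfies $n\ge x_1\ge x_2\ge\cdots\ge x_k\ge 0$. The function $g_k$ is linear, and its coefficients $k+1-2i$ are antisymmetric: the coefficient of $x_i$ is the negative of the coefficient of $x_{k+1-i}$. For odd $k$ the middle coefficient is zero. Pairing terms gives
\[
g_k(\vec{x})=\sum_{i=1}^{p}(k+1-2i)\,(x_i-x_{k+1-i}).
\]
Each difference satisfies $0\le x_i-x_{k+1-i}\le n$, and each coefficient $k+1-2i$ is positive for $i\le p$. Hence
\[
g_k(\vec{x})\le n\sum_{i=1}^{p}(k+1-2i)=n\bigl(p(k+1)-p(p+1)\bigr)=np(k-p)=npq.
\]
The only computation to check is the arithmetic sum $\sum_{i=1}^{p}(k+1-2i)=p(k-p)$.

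\textbf{Attainment.} I would use the path that takes $n$ steps $\vec{e}_1$, then $n$ steps $\vec{e}_2$, and so on, through $n$ steps $\vec{e}_k$. Its intermediate points are nonincreasing vectors with entries between $0$ and $n$, and the path uses each $\vec{e}_i$ exactly $n$ times, so it is a balanced ballot path. After the first $pn$ steps it reaches the point $(n,\dots,n,0,\dots,0)$, with $p$ entries equal to $n$. For this point every paired difference $x_i-x_{k+1-i}$ with $i\le p$ equals $n$, because $k+1-i\ge q+1>p$. Its height is therefore $npq$.

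Combining the two parts shows that the maximum semisymmetric height over all such paths is exactly $\lfloor k/2\rfloor\lceil k/2\rceil n$. No step is difficult. The one point needing care is the pairing rewrite of $g_k$ and the bookkeeping of the middle coordinate when $k$ is odd.
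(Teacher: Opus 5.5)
Your proof is correct and follows the paper's argument. For the upper bound, the paper notes that only the coordinates $x_1,\dots,x_{\lceil k/2\rceil}$ carry nonnegative coefficients and that each is at most $n$; your pairing $g_k(\vec{x})=\sum_{i=1}^{\lfloor k/2\rfloor}(k+1-2i)(x_i-x_{k+1-i})$ is a cleaner, more explicit form of that bound. The paper then attains the bound with the same block path ($n$ copies of $\vec{e}_1$, then $n$ of $\vec{e}_2$, and so on), evaluated after the $\vec{e}_{\lceil k/2\rceil}$ block rather than your $\vec{e}_{\lfloor k/2\rfloor}$ block, which gives the same value because the middle coefficient vanishes when $k$ is odd.
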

\begin{proof}
First, we show that all $k$-dimensional balanced ballot paths of length $kn$ have a height of at most $\lfloor \frac{k}{2}\rfloor \lceil \frac{k}{2} \rceil n$. For each $i \in \{1,2, \dots, \lceil \frac{k}{2}\rceil\}$, there are $n$ steps of form $\vec{e}_i$. All other steps are semisymmetric down-steps. We find that the maximum semisymmetric height of any intermediate point in the path is no more than $\left \lfloor \frac{k}{2}\right\rfloor \lceil \frac{k}{2} \rceil n$.

Next, we show that there exists a $k$-dimensional balanced ballot path of $kn$ steps that has semisymmetric height $\left\lfloor \frac{k}{2} \right\rfloor \lceil \frac{k}{2} \rceil n$. Consider the balanced ballot path that first has $n$ consecutive steps in the $\vec{e}_1$ direction, then $n$ consecutive steps in the $\vec{e}_2$ direction, and so on. Immediately after the last step in the $\vec{e}_{\lceil \frac{k}{2} \rceil}$th direction, the path is at the intermediate point $\sum_{j=1}^{\lceil \frac{k}{2} \rceil}n \vec{e}_j$. The semisymmetric height at this point is $\sum_{j=1}^{\left\lceil \frac{k}{2}\right\rceil} (k-2j+1)n = n \lceil \frac{k}{2} \rceil \cdot \left\lfloor \frac{k}{2} \right\rfloor$. This fact completes the proof.\end{proof}

We now find the lower bound.

\begin{proposition}\label{prop:MinHeight}
Let $n \geq 1$. Then, the minimum possible semisymmetric height of a $k$-dimensional balanced ballot path of length $kn$ is $\left\lceil \frac{k}{2}\right\rceil \left\lfloor \frac{k}{2} \right\rfloor.$
\end{proposition}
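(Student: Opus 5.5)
Two things need to be shown: every balanced ballot path of length $kn$ (with $n\ge 1$) has semisymmetric height at least $\lceil \frac{k}{2}\rceil\lfloor \frac{k}{2}\rfloor$, and some path attains exactly this value.

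For the lower bound, I would look at the intermediate point $\vec{v}_{\lceil k/2\rceil} = \sum_{j=1}^{\lceil k/2\rceil}\vec{e}_j$. The ballot property forces the first step to be $\vec{e}_1$. More generally, a coordinate $x_i$ cannot become positive before $x_{i-1}$ is positive. So at any intermediate point with $x_1 = 1$, the positive coordinates form an initial segment $x_1=\dots=x_r=1$. Fix a path and let $t$ be the index of its last intermediate point with $x_1 \le 1$. Before the second $\vec{e}_1$ step, every coordinate lies in $\{0,1\}$, and the points are of the form $\sum_{j=1}^r \vec{e}_j$ with $r$ running through $0,1,\dots$ in order. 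The path cannot take a second $\vec{e}_1$ step until $r$ reaches $k$, because the second $\vec{e}_1$ step needs $x_1\geq x_2$ afterwards, which always holds. This is wrong: the path may take a second $\vec{e}_1$ step early, so I would restate the argument as follows.

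Let $r$ be the number of steps taken before the second $\vec{e}_1$ step (or $r=k$ if $n=1$). Then the point $\sum_{j=1}^{m}\vec{e}_j$ is visited for $m=\min(r,\lceil k/2\rceil)$. If $r \ge \lceil k/2\rceil$, the point $\sum_{j=1}^{\lceil k/2\rceil}\vec{e}_j$ is visited, and its height is
\[\sum_{j=1}^{\lceil k/2\rceil}(k+1-2j)=\left\lceil \tfrac{k}{2}\right\rceil\left\lfloor \tfrac{k}{2}\right\rfloor.\]
If $r<\lceil k/2\rceil$, the next point is $2\vec{e}_1+\sum_{j=2}^{r}\vec{e}_j$, whose height is $(k-1)+\sum_{j=1}^r(k+1-2j)$. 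I need to compare this with the target. More robustly, I would use a monotonicity argument: the height increments along a path are $k+1-2i$ for a step $\vec{e}_i$, so the height increases exactly on up-steps.

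So the cleaner plan is the following. Consider the first intermediate point at which the path takes a step that is not a semisymmetric up-step. I claim the height there is already at least the target. Until that point, only up-steps $\vec{e}_i$ with $i\le\lfloor k/2\rfloor$ occur. The first non-up-step is some $\vec{e}_i$ with $i>\lfloor k/2\rfloor$, and it requires $x_{i-1}\ge x_i+1\geq 1$. So all of $x_1,\dots,x_{\lfloor k/2\rfloor}$ are at least $1$ at that moment (when $i=\lfloor k/2\rfloor+1$; for larger $i$, $x_{i-1}\ge1$ would need an earlier non-up-step, which is impossible). Each $\vec{e}_j$ with $j\le\lfloor k/2\rfloor$ contributes $k+1-2j>0$ (or $\ge 1$). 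Hence the height is at least
\[\sum_{j=1}^{\lfloor k/2\rfloor}(k+1-2j)=\left\lfloor \tfrac{k}{2}\right\rfloor\left\lceil \tfrac{k}{2}\right\rceil,\]
using the same arithmetic as in Proposition~\ref{prop:maxheight}. For odd $k$, the neutral index contributes $0$, which is consistent.

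For attainability, I would take the path $(\vec{e}_1,\vec{e}_2,\dots,\vec{e}_k)^n$. Within each block the height rises to the target after the up-steps, stays there through the neutral step, and then falls back to $0$ at the block's end. So the maximum height is exactly $\lceil k/2\rceil\lfloor k/2\rfloor$. The main subtlety is justifying that the first non-up-step must be $\vec{e}_{\lfloor k/2\rfloor+1}$, with every earlier coordinate positive. This follows directly from the ballot property, since the coordinates $x_{\lfloor k/2\rfloor+1},\dots,x_k$ are all still $0$ at that point.
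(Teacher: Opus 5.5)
Once you delete the abandoned first attempts, your final argument is correct and is essentially the paper's. The ballot property forces $x_1,\dots,x_{\lfloor k/2\rfloor}$ all to become positive before any neutral or down-step can occur, and the path $(\vec{e}_1,\dots,\vec{e}_k)^n$ attains the bound.

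Your witness point is slightly cleaner than the paper's. You use the start of the first non-up-step, which exists because $n\ge 1$. There each $x_j\ge 1$ for $j\le\lfloor k/2\rfloor$ individually, so positivity of the coefficients finishes the argument. The paper instead uses the $\lfloor k/2\rfloor$th intermediate point, where it only knows $\sum_j x_j=\lfloor k/2\rfloor$. To conclude from that, it needs a dominance argument using $x_1\ge\dots\ge x_{\lfloor k/2\rfloor}$ and the decreasing coefficients $k+1-2j$, which the paper leaves implicit.
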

\begin{proof}
We first show that any $k$-dimensional balanced ballot path $P = (\vec{s}_1, \vec{s}_2, \dots, \vec{s}_{kn})$ of $kn$ steps has a semisymmetric height of at least $\left\lceil \frac{k}{2}\right\rceil \left\lfloor \frac{k}{2} \right\rfloor.$  It suffices to show that $P$ has an intermediate point $\vec{x}$ with semisymmetric height at least $\left\lceil \frac{k}{2}\right\rceil \left \lfloor \frac{k}{2} \right\rfloor.$ We use $\vec{x} = (x_1, x_2, \dots, x_k) = \sum_{j=1}^{\left\lceil \frac{k}{2}\right\rceil}\vec{s}_{j}$ to denote the $\left\lfloor \frac{k}{2}\right\rfloor$th intermediate point of $P$. Because of the ballot property of balanced ballot paths (Definition~\ref{defn:k-dim-path}), we find that the first $\left\lfloor \frac{k}{2} \right\rfloor$ steps of $P$ are all semisymmetric up-steps (i.e., in ${\vec{e}_{1}, \vec{e}_2, \dots, \vec{e}_{\left\lfloor \frac{k}{2} \right\rfloor}}$); otherwise, there exist indices $i_{bad} \in \{1, 2, \dots, \left\lfloor \frac{k}{2} \right\rfloor \}$ and $i_{bad2} \in \{i_{bad}+1, i_{bad}+2, \dots, k\}$ such that both $x_{i_{bad}} = 0$ and $x_{i_{bad2}} > 0$, which contradicts the ballot property.  Because the first $\left\lfloor \frac{k}{2}\right\rfloor$ steps are all semisymmetric up-steps, we have $\sum_{j=1}^{\left\lfloor \frac{k}{2}\right\rfloor}x_j \geq \left\lfloor \frac{k}{2}\right\rfloor$. Thus, balanced ballot path $P$ has a semisymmetric height of at least $\lceil \frac{k}{2} \rceil \lfloor \frac{k}{2} \rfloor$.

Next, we show that there exists a $k$-dimensional ballot path of length $kn$ that has height $\lceil \frac{k}{2}\rceil \cdot \left\lfloor \frac{k}{2} \right\rfloor$. Consider the balanced ballot path $P_0 = (\vec{s}_1, \vec{s}_2, \dots, \vec{s}_{kn})$. It consists of $\vec{e}_1, \vec{e}_2, \dots, \vec{e}_k$ for the first $k$ steps, that same sequence for the next $k$ steps, and so on. The maximum height reached by an intermediate point in this path is $\sum_{j=1}^{\left\lfloor\frac{k}{2} \right\rfloor}(k-2j+1) =  \left\lceil \frac{k}{2} \right\rceil \cdot \left\lfloor\frac{k}{2} \right\rfloor$. This is reached at the intermediate point $\vec{e}_1+\vec{e}_2+\dots+\vec{e}_{\left\lfloor \frac{k}{2} \right\rfloor }$. Thus, by Definition~\ref{def:height-new}, the semisymmetric height of $P_0$ is $g_k(P_0) = \max \{g_k(\vec{0}+\sum_{j=1}^i\vec{s}_j):i \in \{0, 1, \dots, kn\}\} = \left\lceil \frac{k}{2} \right\rceil \cdot \left\lfloor\frac{k}{2} \right\rfloor$.

\end{proof}

One may be tempted to conjecture that for any $n \geq 1$ and $k \geq 2$, we have $\widehat{C}_{k, \left\lfloor\frac{k}{2} \right\rfloor \cdot \left\lceil\frac{k}{2} \right\rceil, n}=1$, i.e., there is exactly one $k$-dimensional balanced ballot path of $kn$ steps with semisymmetric height $\left\lfloor \frac{k}{2} \right\rfloor \cdot \left\lceil \frac{k}{2} \right\rceil$. For any positive $n$, there is only one Dyck path of length $2n$ with the smallest possible height; it has no intermediate points above the line $y=1$. However, this conjecture is false. Note the following counterexample.

\begin{example}
Consider $k=18$ and $n=2$. We show $\widehat{C}_{k, \left\lfloor\frac{k}{2} \right\rfloor \cdot \left\lceil\frac{k}{2} \right\rceil, n} > 1$, i.e., there are multiple $18$-dimensional balanced ballot paths of length $36$ and semisymmetric height of exactly $81$. One such path is $P_0 =(\vec{e}_1, \vec{e}_2, \dots, \vec{e}_{18}, \vec{e}_1, \vec{e}_2, \dots, \vec{e}_{18})$ from the proof of Proposition~\ref{prop:MinHeight}. We prove that there exists another path $P' \neq P_0$. First, consider the sub-ballot path $P{sub} = (\vec{e}_1,\vec{e}_2,\ldots,\vec{e}_{16},\vec{e}_1,\vec{e}_2)$ from $(0, 0, \dots, 0)$ to $(2, 2, 1, 1,\dots, 1,0, 0)$, which has a semisymmetric height of exactly $81$. The semisymmetric height of $(2, 2, 1, 1,\dots, 1,0, 0)$ is $g_{18}((2, 2, 1, 1,\dots, 1,0, 0)) = 64<81$. Next, append the sequence $(\vec{e}_{17}, \vec{e}_{17}, \vec{e}_{18}, \vec{e}_{18}, \vec{e}_3, \dots, \vec{e}_{16})$ to $P_{sub}$. The resulting sequence is a balanced ballot path $P'$ with $36$ steps and semisymmetric height $81$, as the sub-ballot path $(\vec{e}_{17}, \vec{e}_{17}, \vec{e}_{18}, \vec{e}_{18}, \vec{e}_3, \dots, \vec{e}_{16})$ from $(2, 2, 1, 1,\dots, 1,0, 0)$ to $(2, 2, \dots, 2)$ has a height of less than $81$.
\end{example}

\subsection{Auxiliary Definitions for Proofs}\label{sec:Auxiliary}
We introduce the $n$th $k$-dimensional $u$-bounded sub-SSWCNs: auxiliary variants of our $k$-dimensional SSWCNs. We use these auxiliary definitions only in the proofs of Theorem~\ref{thm:periodboundedkdim} in Sections~\ref{sec:periodicity} and of Theorems~\ref{thm:42},~\ref{thm:58}, and~\ref{thm:kboundedkdimensional} in Section \ref{sec:kuboundedweightedCatalan}. In particular, the proofs in those sections require decomposing balanced ballot paths at intermediate points, which motivates generalizing the notion of $k$-dimensional SSWCNs to the setting of sub-ballot paths.

We first introduce the notion of a semisymmetric weight for sub-ballot paths, which is the key tool in defining the $n$th $k$-dimensional $u$-bounded sub-SSWCNs.
\begin{definition}
\label{defn:weightsSSWTsubballot}
Let $\vec{a}$ and $ \vec{a}'$ both be vectors in $ \mathbb{Z}_{\geq 0}^k$. Let $P$ be a $k$-dimensional sub-ballot path $P$ from $\vec{a}$ to $\vec{a}'$. Let $\vec{b}= (b_0, b_1, \dots)$ and $ \vec{c} = (c_0, c_1, c_2, \dots, )$ be infinite sequences of integers. The \textbf{semisymmetric weight $sswt{\vec{b}, \vec{c}}(P)$ of $P$ with respect to $\vec{b}$ and $\vec{c}$} is the product $$sswt_{\vec{b}, \vec{c}}(P) = (b_{u_1}b_{u_2}\dots b_{u_{\ell}})(c_{u_1'}c_{u_2'}\dots c_{u_{\ell'}'}),$$ where the product is over all steps in $P$. In the product, we use $u_i$ to denote the semisymmetric height of the starting point of the $i$th semisymmetric up-step and  $u_{i}'$ to denote the semisymmetric height of the resulting point of the $i$th step of $P$ that is not a semisymmetric up-step.
\end{definition}

Using the semisymmetric weight of sub-ballot paths, we define the following generalization of $k$-dimensional, $u$-bounded SSWCNs, which we will use in the proof of Theorem~\ref{thm:periodboundedkdim}, which is the key result for Section~\ref{sec:periodicity}, and in the results for Section~\ref{sec:kuboundedweightedCatalan}
\begin{definition}\label{def:subSSWCN}
Let $\vec{b}$ and $\vec{c}$ be infinite sequences of integers. Let $\vec{a}$ be a vector in $\mathbb{Z}{\geq 0}^k$. Let $k, u$ be positive integers. Let $n $ be a nonnegative integer. The \textbf{$n$th $k$-dimensional $u$-bounded sub-SSWCN} for $\vec{a}$ is $$\widehat{C}_{k, u, \vec{a}, n}^{\vec{b}, \vec{c}} = \sum_{P}sswt_{\vec{b}, \vec{c}}(P),$$ with the summation being over all sub-ballot paths from $\vec{a}$ to $(n, \dots, n)$ whose semisymmetric heights never exceed $u$.
\end{definition}

As with the $k$-dimensional SSWCNs and their variants, we introduce an abbreviation of the mathematical expression for the $k$-dimensional $u$-bounded sub-SSWCNs for $\vec{c} = (1, 1, \dots)$. These abbreviations are only used in the proofs of Section~\ref{sec:kuboundedweightedCatalan}.

\begin{definition}[Abbreviation for $\vec{c} = (1, 1, \dots)$.]\label{def:abbrvSumSub}
We define the following abbreviation of \textbf{$n$th $k$-dimensional $u$-bounded sub-SSWCN}:
$$\widehat{C}_{k, u, \vec{a}, n}^{\vec{b}} = \widehat{C}_{k, u, \vec{a}, n}^{\vec{b}, (1, 1, \dots)}.$$
\end{definition}
In particular, we have $\widehat{C}_{k, u,n}^{\vec{b}, \vec{c}} = \widehat{C}_{k, u, \vec{0}, n}^{\vec{b}, \vec{c}}$

\section{Eventual Periodicity of \texorpdfstring{$k$}{k}-dimensional SSWCNs \texorpdfstring{$\pmod{m}$}{(mod m)}}\label{sec:periodicity}
Now that we have introduced new $k$-dimensional generalizations of weighted Catalan numbers, an arithmetic question arises: are these sequences eventually periodic modulo an integer $m$? In our previous paper \cite{MR5034574}, we proved eventual periodicity results for weighted $k$-dimensional Catalan numbers $C_{k,n}^{\vec{b}}$ (see Section~\ref{sec:Relation}). This leads us to ask whether the same phenomenon holds for the $k$-dimensional SSWCNs and their $u$-bounded variants modulo an integer $m$.

Our strategy is first to prove the eventual periodicity of $\widehat{C}_{k,u,n}^{\vec{b},\vec{c}} \pmod{m}$ by deriving linear recurrences for the $k$-dimensional $u$-bounded sub-SSWCNs defined in Section~\ref{def:subSSWCN}. We then apply this result to deduce eventual periodicity of the sequence $\{\widehat{C}_{k,n}^{\vec{b},\vec{c}} \pmod{m}\}_{n\geq 0}$ under suitable divisibility conditions on $\vec{b}$ and $\vec{c}$.

\begin{theorem}
\label{thm:periodboundedkdim}
    Let $m$ and $u$ be positive integers. Let $\vec{b}= (b_0, b_1, b_2, \dots)$ and $\vec{c}= (c_0, c_1, c_2, \dots)$ be infinite sequences of integers. The sequence $\{\widehat{C}_{k,u,n}^{\vec{b},\vec{c}}\pmod{m} \}_{n\geq 0}$ of the $k$-dimensional $u$-bounded SSWCNs modulo $m$ is  eventually periodic.
\end{theorem}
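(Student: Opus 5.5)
The plan is a transfer-matrix argument. A path of bounded semisymmetric height stays in a region that is finite modulo translation by the diagonal vector $(1,1,\dots,1)$. Translating by that vector does not change $g_k$, since the coefficients $k+1-2i$ sum to zero. It also preserves the ballot property. So the weight of each step depends only on the translation class of its starting point.

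The first step is to show that the set
\[
S_u=\{\vec{x}\in\mathbb{Z}_{\geq 0}^k:\ x_1\geq\dots\geq x_k=0,\ g_k(\vec{x})\leq u\}
\]
is finite. Write $d_i=x_i-x_{i+1}\geq 0$. Then
\[
g_k(\vec{x})=\sum_{i=1}^{k-1} i(k-i)\,d_i ,
\]
and every coefficient $i(k-i)$ is positive. Hence $g_k\leq u$ bounds each $d_i$, and $S_u$ is finite.

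Every intermediate point $\vec{x}$ of a path counted by $\widehat{C}_{k,u,n}^{\vec{b},\vec{c}}$ decomposes uniquely as $\vec{x}=\vec{y}+x_k(1,\dots,1)$ with $\vec{y}\in S_u$. A step $\vec{e}_j$ from $\vec{x}$ is legal exactly when $\vec{y}+\vec{e}_j$ still satisfies the ballot inequalities and has height at most $u$. This condition, and the weight factor the step contributes ($b_{g_k(\vec{y})}$ for an up-step, $c_{g_k(\vec{y}+\vec{e}_j)}$ otherwise), depend only on $\vec{y}$. The step $\vec{e}_k$ is the only one that raises $x_k$: it sends $\vec{y}$ to $\vec{y}+\vec{e}_k-(1,\dots,1)$, normalized back into $S_u$.

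Next, I decompose each path at the times $x_k$ increases. Let $A$ be the $S_u\times S_u$ matrix whose $(\vec{y},\vec{y}')$ entry is the weighted sum, reduced modulo $m$, of all walks from $\vec{y}$ to $\vec{y}'$ that stay inside $S_u$ and use only steps $\vec{e}_1,\dots,\vec{e}_{k-1}$, followed by one final $\vec{e}_k$ step. Walks without $\vec{e}_k$ strictly increase the coordinate sum inside a finite set, so there are finitely many of them and the entries of $A$ are well-defined. The paths counted by $\widehat{C}_{k,u,n}^{\vec{b},\vec{c}}$ are exactly the concatenations of $n$ such blocks that start and end at $\vec{0}$: a balanced path ends at $(n,\dots,n)$, which normalizes to $\vec{0}$, and it ends with an $\vec{e}_k$ step when $n\geq 1$. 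This gives
\[
\widehat{C}_{k,u,n}^{\vec{b},\vec{c}}\equiv (A^n)_{\vec{0},\vec{0}}\pmod m .
\]
Equivalently, the vector of sub-SSWCNs $\widehat{C}_{k,u,\vec{a},n}^{\vec{b},\vec{c}}$ over normalized starting points satisfies a linear recurrence with constant coefficients.

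Finally, the powers $A^n$ live in the finite set of matrices over $\mathbb{Z}/m\mathbb{Z}$. By pigeonhole, $A^{N}=A^{N+p}$ for some $N\geq 0$ and $p\geq 1$, so $A^{n+p}=A^n$ for all $n\geq N$, and the sequence is eventually periodic.

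The main obstacle is making the block decomposition rigorous: checking that the translation invariance of the ballot constraints, the height bound, and the weights holds exactly, including the convention that non-up-steps are weighted by the height of their resulting point. The finiteness of $S_u$ is the genuinely needed input, and it comes from the positive coefficients $i(k-i)$ in the formula for $g_k$.
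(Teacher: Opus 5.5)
Your proof is correct. It follows the same overall strategy as the paper: quotient by the diagonal translation, which preserves $g_k$, the ballot inequalities and the step weights; build a finite transfer matrix; then apply pigeonhole modulo $m$. The difference is the block decomposition.

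The paper cuts every path into consecutive blocks of exactly $k$ steps. Its states are the normalized points $S'$ coming from points with $k \mid \sum_j w_j$, and its matrix entries are sums over $k$-step sub-ballot paths. These are trivially finite and easy to list by hand, as in its $k=3$, $u=5$ example and the computations of Section 4. The price is that one $k$-step block can raise $x_k$ by a variable amount. So the paper must reindex the sub-SSWCNs by the number of remaining $k$-blocks, writing $\gamma_{\vec{a},\,n-\frac{1}{k}\sum_j a_j}$, to get a first-order recurrence $\vec{\gamma}_n = T\vec{\gamma}_{n-1}$.

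You instead cut after each $\vec{e}_k$ step, using that a nonempty balanced ballot path must end in $\vec{e}_k$ (the penultimate point has to satisfy the ballot inequalities). Each block then advances $n$ by exactly one, giving $\widehat{C}^{\vec{b},\vec{c}}_{k,u,n} = (A^n)_{\vec{0},\vec{0}}$ directly, with no reindexing. The cost is that your matrix entries are sums over walks of a priori unbounded length; you correctly get their finiteness from the finiteness of $S_u$.

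You also prove that the state space is finite, via $g_k(\vec{x}) = \sum_{i=1}^{k-1} i(k-i)(x_i - x_{i+1})$ with all coefficients positive. The paper only asserts this fact.
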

\begin{proof}
Before we begin the proof, we introduce some notation. Let $S$ be the collection of points $\vec{x}$ that have semisymmetric height at most $u$ and can be reached from $(0, 0, \dots, 0)$ by a sequence of $k$-step sub-ballot paths. In other words, $$S  = \left\{(w_1, w_2, \dots, w_k) \in \mathbb{Z}_{\geq 0}^k :  k \mid \sum_{j=1}^kw_j, w_1 \geq w_2 \geq \dots \geq w_k, \mbox{ and } g_k(\vec{w})\leq u\right\}.$$ Define $S' = \{(w_1, w_2, \dots, w_k) - w_k(1, 1, \dots , 1):(w_1, w_2, \dots, w_k) \in S \}$. In other words, set $S'$ is the subset of points $S$ whose $k$th coordinates are $0$. Let $\ell$ denote the size of $S'$. We know $\ell$ is finite because there are finitely many points $\vec{w} \in \mathbb{Z}_{\geq 0}^k$ whose last coordinate is zero and semisymmetric height is between $0$ and $u$ inclusive. 

For each $\vec{a} = (a_1, a_2, \dots, a_{k-1}, 0) \in S'$ and $n \in \{0,1,\dots,k\}$, let $\gamma_{\vec{a}, n-\sum_{j=1}^k a_k} = \widehat{C}_{k, u, \vec{a}, n}^{\vec{b},\vec{c}}$. Let $\vec{\gamma}_n$ be the vector $\vec{\gamma}_n = (\gamma_{\vec{a}, n})_{\vec{a} \in S}$. For $\vec{\gamma}_n$, the coordinate corresponding to $\vec{a} = \vec{0}$ is $1$, and all others are equal to $0$.

Our strategy is first to derive an $\ell \times \ell$ matrix $T$ such that $\vec{\gamma}_n = T \vec{\gamma}_{n-1}$ and then apply a pigeonhole argument to prove the eventual periodicity of the sequence of vectors $\{\vec{\gamma}_n \pmod m\}_{n\ge 0}$ in $(\mathbb{Z}/m\mathbb{Z})^\ell$. The argument based on the matrix-based recurrence and the pigeonhole principle is similar to that used in \cite{MR5034574} to prove the periodicity of $k$-dimensional $u$-bounded weighted Catalan numbers.
    
We first find the matrix $T$ for our matrix-based recurrence $\vec{\gamma}_n = T\vec{\gamma}_{n-1}$. First, observe that for any $\vec{a} \in S'$, we have the following recursive relation, \begin{equation}\label{eq:Recursion}\widehat{C}_{k, u, \vec{a}, n}^{\vec{b}, \vec{c}} = \sum_{\vec{a}^{\prime}} \sum_{P} \left(sswt_{\vec{b}, \vec{c}}(P) \right)\widehat{C}_{k, u, \vec{a}^{\prime}, n}^{\vec{b}, \vec{c}},\end{equation} where the outer sum is over all $\vec{a}^{\prime}=(a_1', a_2',\dots, a_k') \in S$ that can be reached from $\vec{a}$ via a sub-ballot path of $k$ steps and the inner sums over all sub-ballot paths $P$ of length $k$ from $\vec{a}$ to $\vec{a}^{\prime}$. 

We then observe that $\widehat{C}_{k, u, \vec{a}', n}^{\vec{b}, \vec{c}} = \widehat{C}_{k, u, \vec{a}'-(a'_k, a'_k, \dots, a'_k), n-a_k'}^{\vec{b}, \vec{c}}$. This holds because the set of sub-ballot paths from $\vec{a}'-(a'_k, a'_k, \dots, a'_k)$ to $(n-1, n-1, \dots, n-1)$ matches those from $\vec{a}'$ to $(n, n, \dots, n)$. Translation of all intermediate points by $(a_k', a_k', \dots, a_k')$ preserves both the ballot property and semisymmetric height. The height of $\vec{a}'-(a'_k, a'_k, \dots, a'_k)$ equals that of $\vec{a}'$ since $g_k((a_k', a_k', \dots, a_k')) = \sum_{i=1}^k(k+1-2i)a_k' = a_k'\sum_{i=1}^k(k+1-2i) = 0$. This equality and Equation (\ref{eq:Recursion}) together imply that for all $\vec{a}=(a_1, a_2, \dots, a_k) \in S'$, we have

\begin{equation}\label{eq:Second}\begin{split}
     & \gamma_{\vec{a}, n -\frac{\sum_{j=1}^k a_j}{k}} \\ & = \widehat{C}_{k, u, \vec{a}, n}^{\vec{b}, \vec{c}} \\ & = \sum_{\vec{a}^{\prime} = (a_1, a_2, \dots, a_k)} \sum_{P} \left(sswt_{\vec{b}, \vec{c}}(P) \right)\widehat{C}_{k, u, \vec{a}^{\prime} - (a_k',a_k', \dots, a_k'), n-a_k'}^{\vec{b}, \vec{c}}   \\ & =\sum_{\vec{a}^{\prime}= (a_1, a_2, \dots, a_k)} \left(\sum_{P} sswt_{\vec{b}, \vec{c}}(P) \right)\gamma_{\vec{a}^{\prime} - (a_k', a_k', \dots, a_k'), n -\frac{\sum_{j=1}^k a_j}{k}-1},\end{split}
\end{equation}

where the outer sum in each expression is over all $\vec{a}^{\prime}=(a_1', a_2',\dots, a_k') \in S$ reachable from $\vec{a}$ by a sub-ballot path of $k$ steps, and the inner sum is over all sub-ballot paths $P$ of length $k$ from $\vec{a}$ to $\vec{a}^{\prime}$. Because vector $\vec{a}'-(a_k', a_k', \dots, a_k')$ is in $S'$, Equation (\ref{eq:Second}) forms a linear relation between $\gamma_{\vec{a}, n}$ terms in $\{\gamma_{\vec{w}, n-1}: \vec{w} \in S'\}$. Here, the coefficient of $\gamma_{\vec{w}, n-1}$ is $(\sum_{P}sswt_{\vec{b}, \vec{c}}(P))$, where the sum is over all $P$ from $\vec{a}$ to $\vec{a}'$.

Thus, if we take Equation (\ref{eq:Second}) for all vectors $\vec{a} \in S'$ and find the coefficients of $\gamma_{\vec{w}, n-1}$ for each $w \in S'$ in that equality, we obtain a finite matrix $T$ such that $\vec{\gamma}_n = T \vec{\gamma}_{n-1}$. The matrix is finite because $|S'| < \infty$.  

Consider left multiplication by $T$ as the function $f:(\mathbb{Z}/m\mathbb{Z})^{\ell} \to (\mathbb{Z}/m\mathbb{Z})^{\ell}$. Since $f$ is an endomorphism and $(\mathbb{Z}/m\mathbb{Z})^{\ell}$ has finitely many values, the pigeonhole principle applies. It tells us there are integers $t, \omega$ so that $\vec{\gamma}_t \equiv  \vec{\gamma}_{t+\omega} \pmod{m}$. In other words, we have $\widehat{C}_{k, u,\vec{a}, t+t'}^{\vec{b}, \vec{c}} \equiv \widehat{C}_{k, u,\vec{a}, t}^{\vec{b}, \vec{c}} \pmod{m}$. Thus, for all $n \geq t$, $t' \geq 0$, and $\vec{a} \in S'$, $\widehat{C}_{k, u,\vec{a}, n}^{\vec{b}, \vec{c}} \equiv \widehat{C}_{k, u,\vec{a}, n+t'\omega}^{\vec{b}, \vec{c}} \pmod{m}$. Since $\vec{0} \in S'$ and $\widehat{C}_{k, u, \vec{0},n}^{\vec{b}, \vec{c}} = \widehat{C}_{k, u, n}^{\vec{b}, \vec{c}}$, we conclude $\widehat{C}_{k, u, n}^{\vec{b}, \vec{c}} \equiv \widehat{C}_{k, u, n+t'\omega}^{\vec{b}, \vec{c}} \pmod{m}$ for all $n \geq t$ and $t' \geq 0$. This completes the proof.
\end{proof}
    \begin{example}
We illustrate the argument of Theorem~\ref{thm:periodboundedkdim} in the case \(k=3\) and \(u=5\). In this case, we have $
S=\{(d,d,d): d\in\mathbb{Z}_{\geq 0}\}\cup\{(d+2,d+1,d): d\in\mathbb{Z}_{\geq 0}\}$
and $S'=\{(0,0,0),(2,1,0)\}$. Define $
\vec{\gamma}_n=\bigl(\gamma_{(0,0,0),n},\gamma_{(2,1,0),n-1}\bigr)$, where $\gamma_{(0,0,0),n} = \widehat{C}_{3,5,(0,0,0),n}^{\vec b,\vec c} = \widehat{C}_{3,5,n}^{\vec b,\vec c}$
and $\gamma_{(2,1,0),n-1} =\widehat{C}_{3,5,(2,1,0),n}^{\vec b,\vec c}$. Applying Equation~(\ref{eq:Second}) to each vector in \(S'\), we obtain the equations
$
\gamma_{(0,0,0),n}
= b_0c_2c_0\,\gamma_{(0,0,0),n-1} +(b_0b_2c_4+b_0c_2b_2)\,\gamma_{(2,1,0),n-1}$ and 
and $\gamma_{(2,1,0),n-1} =(c_4c_2c_0+c_2^2c_0)\,\gamma_{(0,0,0),n-2} +
(2c_4c_2b_2+c_2^2b_2)\,\gamma_{(2,1,0),n-2}$. Thus, we obtain the matrix $T= \begin{bmatrix}
b_0c_2c_0 & b_0b_2c_4+b_0c_2b_2 \\
c_4c_2c_0+c_2^2c_0 & 2c_4c_2b_2+c_2^2b_2
\end{bmatrix}$ and the linear relation $\vec{\gamma}_n=T\vec{\gamma}_{n-1}$.
Because left multiplication by \(T\) defines a function from the finite set \((\mathbb{Z}/m\mathbb{Z})^2\) to itself, the pigeonhole principle implies that there exist integers \(t\) and \(\omega\) such that $\vec{\gamma}_t\equiv \vec{\gamma}_{t+\omega}\pmod{m}$. It follows that the sequence \(\{\vec{\gamma}_n \bmod m\}_{n\geq 0}\) is eventually periodic. Consequently, the sequence $\{\gamma_{(0,0,0),n}\bmod m\}_{n\geq 0} = \{\widehat{C}_{3,5,n}^{\vec b,\vec c}\bmod m\}_{n\geq 0}$ is eventually periodic as well.
\end{example}

We now derive the corresponding result for the $k$-dimensional SSWCNs, whose proof follows from Theorem~\ref{thm:periodboundedkdim}. This result is the analogue of Theorem 3.2 of \cite{MR5034574}, which states that if there exists an integer $u$ such that $b_u, b_{u+1}, \dots, b_{u+k-1}$ are all divisible by $m$, then the sequence $\{C_{k,n}^{\vec{b}} \pmod{m}\}_{n \geq 0}$ is eventually periodic.

\begin{theorem}\label{thm:period-weighted-kdim}
Let $m$ be a positive integer, and let $\vec b=(b_0,b_1,\dots)$ and $\vec c=(c_0,c_1,\dots)$ be infinite sequences of integers. Suppose there exists a positive integer $u$ such that at least one of the following holds:
\begin{enumerate}
    \item each element of $\{b_u,b_{u+1},\dots,b_{u+k-1}\}$ is divisible by $m$; or
    \item each element of $\{c_{u-1},c_u,\dots,c_{u+k-2}\}$ is divisible by $m$.
\end{enumerate}
Then the sequence $\{\widehat{C}^{\vec{b},\vec{c}}_{k,n} \pmod{m}\}_{n\ge 0}$ is eventually periodic.
\end{theorem}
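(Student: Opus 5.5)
The plan is to reduce Theorem~\ref{thm:period-weighted-kdim} to Theorem~\ref{thm:periodboundedkdim}. I will show that, under either hypothesis, every balanced ballot path whose semisymmetric height is too large has semisymmetric weight divisible by $m$. Then the full sum $\widehat{C}^{\vec b,\vec c}_{k,n}$ is congruent modulo $m$ to a $u'$-bounded SSWCN $\widehat{C}^{\vec b,\vec c}_{k,u',n}$ for a fixed $u'$ independent of $n$. Eventual periodicity of the latter modulo $m$ is exactly Theorem~\ref{thm:periodboundedkdim}, so the theorem follows.

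The basic observation concerns the change in $g_k$ along a single step. A step $\vec e_i$ changes $g_k$ by $k+1-2i$, so:
\begin{itemize}
\item a semisymmetric up-step ($i\le\lfloor k/2\rfloor$) increases $g_k$ by an amount in $\{1,\dots,k-1\}$;
\item a semisymmetric down-step decreases $g_k$ by an amount in $\{1,\dots,k-1\}$;
\item a neutral step (odd $k$, $i=\lfloor k/2\rfloor+1$) leaves $g_k$ unchanged.
\end{itemize}
In particular, only up-steps raise the height and only down-steps lower it, each by at most $k-1$.

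For hypothesis (1), take a path $P$ from $\vec 0$ to $(n,\dots,n)$ with $g_k(P)\ge u+k$. Let $\vec v$ be the first intermediate point with $g_k(\vec v)\ge u+k$; since $g_k(\vec 0)=0<u+k$, the point $\vec v$ is not the start. The step arriving at $\vec v$ must be an up-step. Its starting height $h$ satisfies $h\le u+k-1$ by minimality and $h\ge u+k-(k-1)=u+1$. Thus the weight of $P$ contains a factor $b_h$ with $h\in\{u,\dots,u+k-1\}$, so $m\mid sswt_{\vec b,\vec c}(P)$. Consequently
\[
\widehat{C}^{\vec b,\vec c}_{k,n}\equiv\widehat{C}^{\vec b,\vec c}_{k,u+k-1,n}\pmod m .
\]

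For hypothesis (2), the argument is symmetric, run backwards from the end. Suppose $g_k(P)\ge u+k-1$, and let $\vec v$ be the last intermediate point with $g_k(\vec v)\ge u+k-1$. Since the endpoint $(n,\dots,n)$ has height $0<u+k-1$, there is a next step, and it must strictly lower the height. Hence it is a down-step, whose resulting height lies in $[u+k-1-(k-1),\,u+k-2]=[u,u+k-2]$. The weight therefore contains a factor $c_{h'}$ with $h'\in\{u-1,\dots,u+k-2\}$, which is divisible by $m$. This gives
\[
\widehat{C}^{\vec b,\vec c}_{k,n}\equiv\widehat{C}^{\vec b,\vec c}_{k,u+k-2,n}\pmod m ,
\]
where $u+k-2\ge 1$ is a positive integer as required by Theorem~\ref{thm:periodboundedkdim}.

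The only delicate point is the bookkeeping of which heights index the $b$'s and the $c$'s. The $b$-factor uses the \emph{starting} height of an up-step, while the $c$-factor uses the \emph{resulting} height of a non-up-step. One must also check that neutral steps cannot be the step that crosses the threshold, since they leave $g_k$ unchanged. Once the extremal-point choice is set up correctly (first crossing for $\vec b$, last crossing for $\vec c$), the reduction to Theorem~\ref{thm:periodboundedkdim} is immediate.
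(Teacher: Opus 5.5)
Your proof is correct and follows essentially the same route as the paper: show that every path of sufficiently large semisymmetric height has weight divisible by $m$ (a first up-crossing for hypothesis 1, a last down-crossing for hypothesis 2), then reduce to a $u'$-bounded SSWCN and apply Theorem~\ref{thm:periodboundedkdim}. The differences are minor. In case 1 you use the cutoff $u+k-1$ where the paper uses $u+k-2$; both are valid. Your explicit first/last-crossing choice of extremal point, together with the observation that the endpoint $(n,\dots,n)$ has height $0$, makes precise the existence claims that the paper states somewhat tersely.
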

\begin{proof}
The idea of the proof is to show that balanced ballot paths $P$ with sufficiently large semisymmetric height contribute $0 \pmod{m}$ to $\widehat{C}_{k, n}^{\vec{b}, \vec{c}} \pmod{m}$. This computation reduces the problem to the one on $k$-dimensional $u$-bounded SSWCNs, where Theorem~\ref{thm:periodboundedkdim} applies. 

We show that any balanced ballot path with semisymmetric height of more than $u+k-2$ has a semisymmetric weight divisible by $m$. We consider the two cases of the hypothesis separately. 

First, suppose $u$ is such that the integers $b_u, b_{u+1}, \dots, b_{u+k-1}$ are all divisible by $m$. Consider any $k$-dimensional balanced ballot path $P$ with semisymmetric height greater than $u+k-2$. There must be a semisymmetric up-step of $P$ starting at an intermediate point $\vec{x}$, where the semisymmetric height $g_k(\vec{x})$ is between $u$ and $u+k-2$. Semisymmetric up-steps increase the semisymmetric height of their endpoints by at least $1$ and at most $k-1$. By the definition of semisymmetric weight, this step contributes a factor $b_i$ for some $i\in\{u\dots,u+k-1\}$. Thus, the semisymmetric weight of $P$ is a multiple of some element in $\{b_u, b_{u+1}, \dots, b_{u+k-1}\}$. Therefore, for any $k$-dimensional balanced ballot path $P$ of length $kn$ with semisymmetric height greater than $u+k-2$, the semisymmetric weight of $P$ is divisible by $m$, i.e., $sswt_{\vec{b}, \vec{c}}(P) \equiv 0 \pmod{m}$.

Next, consider the case where $u$ is such that $c_{u+k-2}, c_{u+k-3}, \dots, c_{u-1}$ are all divisible by $m$. Let $P$ be any $k$-dimensional balanced ballot path reaching semisymmetric height greater than $u+k-2$. There must be some step that is not an up-step and has a resulting intermediate point $\vec{y}$ with semisymmetric height $g_k(\vec{y}) \in \{u+k-2, u+k-3, \dots, u-1\}$. This is because steps that are not up-steps (i.e., down-steps and neutral steps) weakly decrease the semisymmetric heights of their resulting intermediate points by at least $0$ and by at most $k-1$. Thus, there exists a semisymmetric down-step or neutral step ending at a point whose semisymmetric height lies in $\{u+k-2, u+k-3, \dots, u-1\}$, and hence $sswt_{\vec{b}, \vec{c}}(P)$ has a factor in $\{c_{u+k-2}, c_{u+k-3}, \dots, c_{u-1}\}$. This implies $sswt_{\vec{b}, \vec{c}}(P)$ is divisible by $m$. 

Thus, any $k$-dimensional balanced ballot path $P$ with semisymmetric height greater than $u+k-2$ yields $sswt_{\vec{b}, \vec{c}}(P) \equiv 0 \pmod{m}$. Consequently, we obtain the congruence $\widehat{C}_{k, n}^{\vec{b}, \vec{c}} = \sum_{P}sswt_{\vec{b}, \vec{c}}(P) \equiv \sum_{P'} sswt_{\vec{b}, \vec{c}}(P') \pmod{m} \equiv \widehat{C}_{k, u+k-2, n}^{\vec{b}, \vec{c}} \pmod{m}$, where the summation is over all $k$-dimensional balanced ballot paths $P$ of length $kn$ with semisymmetric height at most $u+k-2$. Since the sequence $\{\widehat{C}_{k,u+k-2,n}^{\vec{b},\vec{c}} \pmod m\}_{n \geq 0}$ is eventually periodic by Theorem~\ref{thm:periodboundedkdim} and $\widehat{C}_{k,n}^{\vec{b},\vec{c}} \equiv \widehat{C}_{k,u+k-2,n}^{\vec{b},\vec{c}} \pmod m$, the sequence $\{\widehat{C}_{k,n}^{\vec{b},\vec{c}} \pmod m\}_{n \geq 0}$ is eventually periodic.
\end{proof}

We now derive a sufficient condition for the eventual periodicity of $\widehat{C}_{k,n}^{\vec{b},\vec{c}}$ modulo $m$ when $m$ divides certain products of two entries of $\vec{b}$. As in Theorem~\ref{thm:period-weighted-kdim}, the proof of the following result also follows from Theorem~\ref{thm:periodboundedkdim}.

\begin{theorem}\label{thm:Productperiodicity}
Let $m$ be a positive integer, and let $\vec b=(b_0,b_1,\dots)$ and $\vec c=(c_0,c_1,\dots)$ be infinite sequences of integers. Suppose there exists a positive integer $u$ such that
\[
b_j b_{j'} \equiv 0 \pmod m
\]
for every pair of distinct integers $j,j' \in \{u,u+1,\dots,u+2k-1\}$. Then the sequence $\{\widehat{C}^{\vec{b},\vec{c}}_{k,n} \pmod{m}\}_{n\ge 0}$ is eventually periodic.
\end{theorem}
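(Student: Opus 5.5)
We reduce to Theorem~\ref{thm:periodboundedkdim}, as in the proof of Theorem~\ref{thm:period-weighted-kdim}. The goal is to show that every balanced ballot path whose semisymmetric height is large enough, say greater than some threshold $U$ depending only on $u$ and $k$, has weight $sswt_{\vec{b},\vec{c}}(P)\equiv 0 \pmod m$. Once this holds, we get $\widehat{C}_{k,n}^{\vec{b},\vec{c}} \equiv \widehat{C}_{k,U,n}^{\vec{b},\vec{c}} \pmod m$ for every $n$. Eventual periodicity then follows from Theorem~\ref{thm:periodboundedkdim} applied with bound $U$.

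To locate the divisible factors, I would look at the window $W=\{u,u+1,\dots,u+2k-1\}$ of $2k$ consecutive heights. It suffices to find two semisymmetric up-steps in $P$ whose starting heights are \emph{distinct} elements of $W$. Their contribution $b_jb_{j'}$ is then $\equiv 0 \pmod m$, so the whole weight vanishes modulo $m$.

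Suppose $g_k(P) > u+2k-2$; I would take $U=u+2k-2$, or something slightly larger if needed. The path starts at height $0<u$. Each semisymmetric up-step raises $g_k$ by an amount in $\{1,\dots,k-1\}$, and the other steps never raise it. Consider the first time the path reaches a height $\geq u$. The up-step doing so starts at a height $j_1 \in [u-(k-1), u-1]$, which may lie outside $W$, so it is cleaner to argue as follows. Let $t_1$ be the first up-step whose starting height lies in $[u, u+k-2]$. Such a step must exist: the path must cross from height $<u$ to height $>u+k-2$, and any up-step that starts below $u$ ends at height at most $u+k-2$. Hence some up-step starts in $[u,u+k-2]$, giving $j_1\in W$.

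Next take the last time the path is at height $\le j_1$ before it first exceeds $j_1+k-1$. Since the path later exceeds $u+2k-2 \ge j_1+k$, the same crossing argument produces an up-step starting at some height $j_2$ with $j_1 < j_2 \le j_1+k-1 \le u+2k-3$. Concretely, apply the argument to the first up-step whose ending height exceeds $j_1+k-1$: that step starts at a height in $[j_1+1, j_1+k-1]$ and lies in $W$, so $j_2\in W$ with $j_2\ne j_1$.

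The main obstacle is making this double-crossing argument rigorous. The subtle point is that the second up-step must start strictly above $j_1$ and still inside $W$; this is exactly why the window has length $2k$ rather than $k$. I would handle it by a clean lemma: if a path's height goes from $<h$ to $>h+k-2$, then some up-step starts at a height in $[h,h+k-2]$. The proof is that the first step whose endpoint exceeds $h+k-2$ is necessarily an up-step of size at most $k-1$. Applying the lemma with $h=u$ gives $j_1$. Applying it again with $h=j_1+1$, along the portion of the path after the point where the height first exceeds $j_1$, gives $j_2$. This requires the threshold $g_k(P)>j_1+k-1$, which holds with $U=u+2k-2$ since $j_1\le u+k-2$. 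The rest is the same bookkeeping as in Theorem~\ref{thm:period-weighted-kdim}.
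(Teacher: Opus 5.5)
Your proof is correct and follows the paper's route: show that every path above a height threshold contains two up-steps contributing $b_jb_{j'}$ with distinct $j,j'$ in the window, deduce $\widehat{C}_{k,n}^{\vec b,\vec c}\equiv \widehat{C}_{k,U,n}^{\vec b,\vec c}\pmod m$, and invoke Theorem~\ref{thm:periodboundedkdim}. The paper uses the threshold $u+2k-1$ instead of your $u+2k-2$, which is immaterial, and your double first-crossing argument forcing $j_1<j_2$ is more careful than the paper's, which never explicitly checks that its two starting heights are distinct. One small tidy-up: apply your lemma with $h=j_1+1$ to the whole path, which starts at height $0<h$, rather than to the portion after the height first exceeds $j_1$, since that portion does not literally start below $h$.
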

\begin{proof}
The gist of this proof is to show that balanced ballot paths $P$ with semisymmetric height exceeding $u+2k-1$ contribute $0 \pmod{m}$ to $\widehat{C}_{k, n}^{\vec{b}, \vec{c}}$. This computation reduces the problem to the one on $k$-dimensional $u$-bounded SSWCNs, where Theorem~\ref{thm:period-weighted-kdim} applies. 

 Let $P$ be any $k$-dimensional balanced ballot path with semisymmetric height larger than $u+2k-1$. We show that the semisymmetric weight $sswt_{\vec{b}, \vec{c}}(P)$ is divisible by $m$. First, we observe that path $P$ must contain at least two up-steps whose starting heights lie in $\{u, u+1, \ldots, u+k-1\}$. In particular, there exist semisymmetric up-steps starting at heights $u_1 \in \{u,u+1,\ldots,u+k-1\}$ and $u_2 \in \{u,u+1,\ldots,u+2k-1\}$. Thus, the semisymmetric weight $sswt_{\vec{b}, \vec{c}}(P)$ of the balanced ballot path $P$ is divisible by $b_{u_1}b_{u_2}$. Consequently, it is divisible by $m$. 

Since $\widehat{C}_{k, n}^{\vec{b}, \vec{c}} = \sum_{P}sswt_{\vec{b}, \vec{c}}(P) \equiv  \sum_{P'}sswt_{\vec{b}, \vec{c}}(P')  \pmod{m} \equiv \widehat{C}_{k, u+2k-1, n}^{\vec{b}, \vec{c}} \pmod{m} $, with the first sum being over all $k$-dimensional balanced ballot paths of length $kn$ and the latter being over the subset of such paths with semisymmetric height at most $u+2k-1$.  Since $\widehat{C}_{k, n}^{\vec{b}, \vec{c}} \equiv  \widehat{C}_{k, u+2k-1, n}^{\vec{b}, \vec{c}} \pmod{m} $  and sequence $\{  \widehat{C}^{\vec{b}}_{k, u+2k-1, 1}  \pmod{m} \}_{n\geq 0}$ is eventually periodic by Theorem \ref{thm:periodboundedkdim}, we conclude that sequence $\{\widehat{C}_{k, n}^{\vec{b}, \vec{c}} \pmod{m}\}_{n \geq 0}$ is eventually periodic.
\end{proof}

\section{Examples of \texorpdfstring{$k$}{k}-dimensional \texorpdfstring{$u$}{u}-bounded SSWCNs}\label{sec:kuboundedweightedCatalan}
To complement our general periodicity results from Section~\ref{sec:periodicity}, we derive formulas for $k$-dimensional $u$-bounded SSWCNs $\widehat{C}_{k,u,n}^{\vec{b}}$ for select small values of $u$ and $k$. As corollaries, we compute closed formulas for the corresponding $k$-dimensional $u$-bounded semisymmetric Catalan numbers $\widehat{C}_{k,u,n}$ (Definition~\ref{def:bounded-and-height}). These formulas also connect the study of $k$-dimensional $u$-bounded SSWCNs to the broader theory of integer sequences (e.g., Corollary~\ref{cor:OEISEquals}).

We restrict our attention to $\widehat{C}_{k,u,n}^{\vec b}$ (rather than $\widehat{C}_{k,u,n}^{\vec b,\vec c}$) and to small values of $k$ and $u$, where explicit formulas can be computed. For each fixed $k$ considered in this section, we begin with a proposition treating the smallest admissible value of $u$ and then proceed to compute  $\widehat{C}_{k,u,n}^{\vec b}$ for the third smallest admissible value.

For the theorems of this section (Theorems~\ref{thm:kboundedkdimensional},~\ref{thm:42}, and~\ref{thm:58}), we employ the following general strategy. We decompose $k$-dimensional balanced ballot paths into smaller $k$-step sub-ballot paths and use this decomposition to derive recursive relations. More specifically, we construct linear relations between $\widehat{C}_{k,u,n}^{\vec{b}}$ and the $k$-dimensional $u$-bounded sub-SSWCNs $\widehat{C}_{k,u,\vec{a},n}^{\vec{b}}$ introduced in Section~\ref{sec:Auxiliary} (Definition~\ref{def:subSSWCN}). The coefficients in these relations are given by weighted sums over sub-ballot paths of length $k$. These results are similar in spirit to the examples of $k$-dimensional weighted Catalan numbers computed in \cite{MR5034574}.

For $k=3$, Proposition~\ref{prop:MinHeight} implies that every nonempty $3$-dimensional balanced ballot path of length $3n$ has semisymmetric height at least $2$. Thus, $u=2$ is the smallest admissible value of $u$ we consider for $k=3$, and we begin by determining $\widehat{C}_{3, 2, n}^{\vec{b}}$ and $\widehat{C}_{3, 3, n}^{\vec{b}}$.
\begin{proposition}\label{prop:three}
    For any integer $n \geq 1$, we have \[\widehat{C}_{3,2,n}^{\vec b}=\widehat{C}_{3,3,n}^{\vec b}=(b_0)^n.\]
\end{proposition}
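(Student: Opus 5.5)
The plan is to show that for $k=3$ there is exactly one balanced ballot path of length $3n$ with semisymmetric height at most $3$, namely the path $P_0=(\vec{e}_1,\vec{e}_2,\vec{e}_3,\vec{e}_1,\vec{e}_2,\vec{e}_3,\dots)$ from the proof of Proposition~\ref{prop:MinHeight}. Its semisymmetric weight is then computed directly.

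First I observe that $g_3(\vec{x}) = 2x_1 - 2x_3$ is always even. Hence a path has semisymmetric height at most $3$ if and only if it has semisymmetric height at most $2$. This gives $\widehat{C}_{3,2,n}^{\vec b}=\widehat{C}_{3,3,n}^{\vec b}$ immediately, since both sums range over the same set of paths.

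Next I characterize the paths of height at most $2$, i.e.\ paths whose every intermediate point satisfies $x_1 - x_3 \leq 1$. I argue by induction on the number of completed blocks of three steps that the path passes through $(d,d,d)$ after $3d$ steps and that the next three steps are forced. Suppose the path is at $(d,d,d)$.
\begin{itemize}
\item The ballot property forces the next step to be $\vec{e}_1$, reaching $(d+1,d,d)$ at height $2$.
\item From $(d+1,d,d)$, an $\vec{e}_1$ step would reach height $4$, and an $\vec{e}_3$ step would violate $x_2 \geq x_3$. So the step is $\vec{e}_2$, reaching $(d+1,d+1,d)$.
\item From $(d+1,d+1,d)$, an $\vec{e}_1$ step would give height $4$, and an $\vec{e}_2$ step would violate $x_1\ge x_2$. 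So the step is $\vec{e}_3$, returning to $(d+1,d+1,d+1)$.
\end{itemize}
Thus the only such path is $P_0$. Since $P_0$ has height exactly $2$, it is indeed counted.

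Finally I compute $sswt_{\vec b}(P_0)$ with $\vec{c}=(1,1,\dots)$. The semisymmetric up-steps are exactly the $n$ steps $\vec{e}_1$. Each of them starts at a point $(d,d,d)$ of height $0$ and so contributes $b_0$. All other steps contribute $1$, which gives $(b_0)^n$.

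The only step requiring care is the forced-step case analysis, and that is routine. The main point to get right is the combination of the parity observation with the two ballot constraints at each stage.
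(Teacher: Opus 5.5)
Your proof is correct and follows essentially the same route as the paper: a forced-step analysis showing that $(\vec e_1,\vec e_2,\vec e_3)$ repeated $n$ times is the only admissible path, followed by a direct computation of its weight. Your parity remark ($g_3(\vec x)=2x_1-2x_3$ is always even) makes explicit why the $u=2$ and $u=3$ sums coincide. The paper leaves that point implicit, since its case analysis only rules out steps that reach height $4$.
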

\begin{proof}
The sequence formed by repeating $(\vec e_1, \vec e_2, \vec e_3)$ $n$ times is the unique nonempty 3-dimensional balanced ballot path of length $3n$ with semisymmetric height of at most 2.

Indeed, the first step of any nonempty $3$-dimensional balanced ballot path must be $\vec {e} _ {1} $. If the second step were $\vec e_1$, then the semisymmetric height would become $4$; if it were $\vec e_3$, the ballot condition would fail. Thus, the second step must be $\vec {e} _2$. Similarly, the third step must be $\vec {e} _3$. Removing this initial block and repeating the same argument shows that the only such path is $P_0$.

Moreover, by Proposition~\ref{prop:MinHeight}, the path $P_0$ has semisymmetric height $2$. Furthermore, each semisymmetric up-step of $P_0$ starts at semisymmetric height $0$. Thus, we obtain 
\[
\widehat{C}_{3,2,n}^{\vec b}=\widehat{C}_{3,3,n}^{\vec b}=sswt_{\vec b}(P_0)=b_0^n.
\]\end{proof}
We now turn to proving a formula for the $3$-dimensional, $4$-bounded SSWCN $\widehat{C}_{3,4,n}^{\vec b}$.
\begin{theorem}
\label{thm:kboundedkdimensional}
    Let $\vec{b} = (b_0, b_1, \dots)$ be an infinite sequence of integers, and let $n \geq 2$. Then, the following recursive relation holds:
    \[\widehat{C}_{3, 4, n}^{\vec{b}} = (b_0+3b_2) \widehat{C}_{3, 4, n-1}^{\vec{b}} + b_0b_2\widehat{C}_{3, 4, n-2}^{\vec{b}}.\]
\end{theorem}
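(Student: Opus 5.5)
The plan is to follow the transfer-matrix strategy of Theorem~\ref{thm:periodboundedkdim}, specialized to $k=3$, $u=4$, $\vec c=(1,1,\dots)$, and then eliminate the auxiliary sequence to get a scalar recurrence. Since $g_3(\vec x)=2x_1-2x_3$, the bound $u=4$ says exactly that every intermediate point satisfies $x_1-x_3\le 2$. Points reachable after a multiple of $3$ steps, normalized so that $x_3=0$, give $S'=\{(0,0,0),(2,1,0)\}$, as in the example after Theorem~\ref{thm:periodboundedkdim}. The point $(3,0,0)$ is excluded because it has height $6$.

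Set $A_n=\widehat{C}^{\vec b}_{3,4,n}=\widehat{C}^{\vec b}_{3,4,\vec 0,n}$ and $D_n=\widehat{C}^{\vec b}_{3,4,(2,1,0),n}$. By the translation invariance noted in the proof of Theorem~\ref{thm:periodboundedkdim}, a sub-path starting at $(d+2,d+1,d)$ contributes $D_{n-d}$, and one starting at $(d,d,d)$ contributes $A_{n-d}$. The only up-step is $\vec e_1$, and every other step contributes $c=1$. So I only need to enumerate the $3$-step sub-ballot paths staying in $x_1-x_3\le2$ and record the heights at which $\vec e_1$ starts.

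\textbf{Paths from $(0,0,0)$.}
\begin{itemize}
\item $\vec e_1\vec e_2\vec e_3$ ends at $(1,1,1)$ and has weight $b_0$.
\item $\vec e_1\vec e_1\vec e_2$ ends at $(2,1,0)$. Its second $\vec e_1$ starts at height $2$, so its weight is $b_0b_2$.
\item $\vec e_1\vec e_2\vec e_1$ ends at $(2,1,0)$ with weight $b_0b_2$.
\end{itemize}
Hence $A_n=b_0A_{n-1}+2b_0b_2D_n$.

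\textbf{Paths from $(2,1,0)$.} Here $\vec e_1$ is forbidden until $x_3$ has increased.
\begin{itemize}
\item $\vec e_2\vec e_3\vec e_3$ and $\vec e_3\vec e_2\vec e_3$ end at $(2,2,2)$, each with weight $1$.
\item $\vec e_2\vec e_3\vec e_1$, $\vec e_3\vec e_1\vec e_2$ and $\vec e_3\vec e_2\vec e_1$ end at $(3,2,1)$. In each, $\vec e_1$ starts at height $2$, so each has weight $b_2$.
\end{itemize}
Hence $D_n=3b_2D_{n-1}+2A_{n-2}$ for $n\ge2$.

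\textbf{Elimination.} Put $E_n=2b_0b_2D_n=A_n-b_0A_{n-1}$. Multiplying the second recurrence by $2b_0b_2$ gives $E_n=3b_2E_{n-1}+4b_0b_2A_{n-2}$, with no division needed. Substituting $E_n=A_n-b_0A_{n-1}$ and $E_{n-1}=A_{n-1}-b_0A_{n-2}$ gives
\[A_n-b_0A_{n-1}=3b_2A_{n-1}-3b_0b_2A_{n-2}+4b_0b_2A_{n-2},\]
which rearranges to the claimed relation.

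The main obstacle is boundary bookkeeping rather than algebra. I must check that the first relation holds for $n\ge1$, using $D_n$ as defined. I must also check the second relation and the substitution for $n\ge2$. In particular $D_1=0$, since $(1,1,1)$ cannot be reached from $(2,1,0)$, and $A_0=1$, which is the empty-path convention. I also need the path enumeration to be exhaustive, which is easy to confirm by a short case check on the first step.
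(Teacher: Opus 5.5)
Your proposal is correct and follows the paper's proof essentially step for step: your $D_n$ is the paper's $B_{n-1}$, and your two transfer relations $A_n=b_0A_{n-1}+2b_0b_2D_n$ and $D_n=3b_2D_{n-1}+2A_{n-2}$ come from the same enumeration of $3$-step sub-ballot paths starting at $(0,0,0)$ and at $(2,1,0)$. The only difference is in the elimination: your substitution $E_n=2b_0b_2D_n=A_n-b_0A_{n-1}$ avoids the paper's division by $2b_0b_2$, so your argument also covers the case $b_0b_2=0$, which the paper's algebra technically skips.
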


\begin{proof}
    For brevity, let $A_n = \widehat{C}_{3, 4,n}^{\vec{b}}$. Let  $B_{n-1}$ denote $\widehat{C}_{3, 4, (2, 1, 0), n}^{\vec{b}}$, the sum of semisymmetric weights of sub-ballot paths $P$ from $(2,1,0)$ to $(n,n,n)$ such that $g_k(P) \leq 4$. We prove the recursive formulas $A_n = b_0 A_{n-1}+2b_0b_2B_{n-1}$ and $B_{n-1} = 2A_{n-2}+3b_2 B_{n-2}$ for all $n \geq 2$  by considering the sub-ballot paths of 3 steps that we can append together to form a full 3-dimensional balanced ballot path of length $3n$. Then, we solve for $A_n$ to obtain the expression $\widehat{C}_{3, 4, n}^{\vec{b}} = (b_0+3b_2) \widehat{C}_{3, 4, n-1}^{\vec{b}} + b_0b_2\widehat{C}_{3, 4, n-2}^{\vec{b}}$.

    We first prove $A_n = b_0A_{n-1}+2b_0b_2B_{n-1}$. There is only one 3-dimensional sub-ballot path of $3$ steps from $(0, 0, 0)$ to $(1, 1, 1)$ that does not exceed semisymmetric height $4$. This sub-ballot path has semisymmetric weight $b_0$. Also, the 3-dimensional 4-bounded SSWCN $\widehat{C}_{3, 4, (1, 1, 1), n}^{\vec{b}}$, the sum of semisymmetric weights of sub-ballot paths from $(1, 1, 1)$ to $(n, n, n)$ with semisymmetric height at most $4$, equals $\widehat{C}_{3, 4, n}^{\vec{b}}$, the sum of semisymmetric weights of sub-ballot paths from $(0, 0, 0)$ to $(n-1, n-1, n-1)$ with semisymmetric height at most $4$. Thus, we obtain the summand $b_0\widehat{C}_{3, 4, n-1}^{\vec{b}} = b_0A_{n-1}$. Then, note that there are only $2$ sub-ballot paths of $3$ steps from $(0, 0, 0)$ to $(2, 1, 0)$ not exceeding the semisymmetric height of $4$: these sub-ballot paths are $(\vec{e}_1, \vec{e}_1, \vec{e}_2)$ and $(\vec{e}_1, \vec{e}_2, \vec{e}_1)$. Both paths have semisymmetric weight $b_0b_2$. Also, the value of the $3$-dimensional $4$-bounded sub-SSWCN $\widehat{C}_{3, 4, (2, 1, 0), n}^{\vec{b}} = B_{n-1}$ is the sum of semisymmetric weights of sub-ballot paths from $(2, 1, 0)$ to $(n, n, n)$ that have semisymmetric height at most $4$. Thus, we obtain the summand $2b_0b_2B_{n-1}$. In sum, we get $A_n = b_0A_{n-1}+2b_0b_2B_{n-1}$.
    
    Now, we prove that $B_{n-1} = 2A_{n-2}+3b_2B_{n-2}$. Observe that are only two sub-ballot paths of $3$ steps from $(2, 1, 0)$ to $(2, 2, 2)$ that have semisymmetric height at most $4$. These sub-ballot paths are $(\vec{e}_2, \vec{e}_3, \vec{e}_3)$ and $(\vec{e}_3, \vec{e}_2, \vec{e}_3)$. Both have a semisymmetric weight of $1$. Also, observe that the $3$-dimensional $4$-bounded sub-SSWCN $\widehat{C}_{3, 4, (2, 2, 2), n}^{\vec{b}}$, the sum of semisymmetric weights of sub-ballot paths from $(2, 2,2)$ to $(n, n, n)$ with semisymmetric height at most $4$, equals $\widehat{C}_{3, 4, n-2}^{\vec{b}} = A_{n-2}$. Thus, we obtain the summand $2A_{n-2}$. Then, note that there are only three 3-step sub-ballot paths from $(2, 1, 0)$ to $(3, 2, 1)$ with a semisymmetric height of at most $4$; these sequences are $(\vec{e}_2, \vec{e}_3, \vec{e}_1), (\vec{e}_3, \vec{e}_2, \vec{e}_1),$ and $(\vec{e}_3, \vec{e}_1, \vec{e}_2)$, which all have semisymmetric weight $b_2$. Since we have $\widehat{C}_{3,4, (3, 2, 1)}^{\vec{b}} = B_{n-2}$ by translation by $(1, 1, 1)$, we obtain the summand $3b_2B_{n-2}$. In sum, we get $B_{n-1} = 2A_{n-2}+3b_2 B_{n-2}$.

 From $A_n = b_0A_{n-1}+2b_0b_2B_{n-1}$, we obtain $B_{n-1} = \frac{A_n- b_0A_{n-1}}{2b_0b_2}$. Substituting into $B_{n-1} = 2A_{n-2}+3b_2B_{n-2}$, we obtain $B_{n-1}= 2A_{n-2} + (3b_2) \frac{A_{n-1}- b_0A_{n-2}}{2b_0b_2} = \frac{3}{2b_0}A_{n-1} +\frac{1}{2}A_{n-2}$. Substituting the last identity into $A_n = b_0A_{n-1}+2b_0b_2B_{n-1}$, we obtain $ A_n = (b_0+3b_2) A_{n-1} + b_0b_2A_{n-2}$. This completes the proof.
\end{proof}

We observe $\widehat{C}_{3, 4, 0}^{\vec{b}} = 1$, since there is one and only one $3$-dimensional balanced ballot path of length $0$. Also, we have $\widehat{C}_{3, 4, 1}^{\vec{b}} = b_0$, since path $(\vec{e}_1, \vec{e}_2, \vec{e}_3)$ is the sole 3-dimensional balanced ballot path of length $3$ with semisymmetric height at most $4$. By applying the characteristic root technique for finding formulas for integer sequences with linear recurrences (c.f., Chapter 4.4 of Levin's reference \cite{levin2025discrete}), we obtain the following formula for the case when $b_0^2+10b_0b_2+9b_2^2 \neq 0$:
 \begin{equation}\label{eq:Equation1}\begin{split}
     \widehat{C}_{3, 4, n}^{\vec{b}} = &   \left(\frac{b_0-3b_2+\sqrt{b_0^2+10b_0b_2+9b_2^2}}{2\sqrt{b_0^2+10b_0b_2+9b_2^2}}\right)  \left(\frac{b_0+3b_2 + \sqrt{b_0^2+10b_0b_2+9b_2^2}}{2}\right)^n \\ & + \left(\frac{-b_0+3b_2+\sqrt{b_0^2+10b_0b_2+9b_2^2}}{2\sqrt{b_0^2+10b_0b_2+9b_2^2}}\right) \left(\frac{b_0+3b_2 - \sqrt{b_0^2+10b_0b_2+9b_2^2}}{2}\right)^n\end{split}
 \end{equation}

In the following corollary of Theorem~\ref{thm:kboundedkdimensional}, we obtain that $\widehat{C}_{3, 4, n}$, the number of $3$-dimensional balanced ballot paths of length $3n$ with semisymmetric height at most $4$, is the $n$th term of the OEIS sequence A015448 \cite{oeis}.

\begin{corollary}\label{cor:OEISEquals}
    Let $n$ be any nonnegative integer. Then, we have $$\widehat{C}_{3, 4, n} = \frac{1}{2\sqrt{5}}\left( (1+\sqrt{5})(2-\sqrt{5})^n - (1-\sqrt{5})(2+\sqrt{5})^n\right).$$ 
\end{corollary}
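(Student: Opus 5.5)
The plan is to specialize Theorem~\ref{thm:kboundedkdimensional} to the all-ones weight sequence $\vec{b} = (1, 1, \dots)$. In that case $sswt_{\vec{b}}(P) = 1$ for every path, so $\widehat{C}^{(1,1,\dots)}_{3,4,n} = \widehat{C}_{3,4,n}$ by Definition~\ref{def:bounded-and-height}. Writing $A_n = \widehat{C}_{3,4,n}$ and putting $b_0 = b_2 = 1$ in Theorem~\ref{thm:kboundedkdimensional} gives
\[
A_n = 4A_{n-1} + A_{n-2} \qquad (n \geq 2).
\]
The initial values come from the remarks after that theorem: $A_0 = 1$ from the empty path, and $A_1 = b_0 = 1$ since $(\vec{e}_1,\vec{e}_2,\vec{e}_3)$ is the only path of length $3$.

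Next, I verify that the right-hand side $F_n = \frac{1}{2\sqrt{5}}\bigl((1+\sqrt{5})(2-\sqrt{5})^n - (1-\sqrt{5})(2+\sqrt{5})^n\bigr)$ satisfies the same recurrence with the same initial values; induction on $n$ then gives $A_n = F_n$ for all $n \geq 0$. The recurrence holds because $2 \pm \sqrt{5}$ are the roots of the characteristic polynomial $x^2 - 4x - 1$: their sum is $4$ and their product is $-1$. So each of $(2\pm\sqrt{5})^n$, and hence any linear combination of them, satisfies $F_n = 4F_{n-1} + F_{n-2}$.

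For the initial values, at $n=0$ we get $F_0 = \frac{(1+\sqrt5)-(1-\sqrt5)}{2\sqrt5} = 1$. At $n=1$, expanding gives $(1+\sqrt5)(2-\sqrt5) = -3+\sqrt5$ and $(1-\sqrt5)(2+\sqrt5) = -3-\sqrt5$, so $F_1 = \frac{2\sqrt5}{2\sqrt5} = 1$. Alternatively, the formula can be read off directly from Equation~(\ref{eq:Equation1}). There the discriminant $b_0^2+10b_0b_2+9b_2^2 = 20 \neq 0$, so that formula applies, with $\sqrt{20} = 2\sqrt5$ and roots $\frac{4\pm 2\sqrt5}{2} = 2\pm\sqrt5$. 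This route requires a short simplification of the coefficients $\frac{-2\pm 2\sqrt5}{4\sqrt5}$ to match the stated form.

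No step is a real obstacle. The only care needed is bookkeeping: the recurrence is valid only for $n \geq 2$, so both base cases $n=0$ and $n=1$ must be checked separately, and the signs in the stated closed form must be confirmed to match.
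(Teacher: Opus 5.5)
Your proof is correct and is essentially the paper's. Both arguments specialize Theorem~\ref{thm:kboundedkdimensional} to $\vec{b}=(1,1,\dots)$. The paper substitutes into Equation~(\ref{eq:Equation1}), while you check directly that the closed form satisfies $A_n=4A_{n-1}+A_{n-2}$ with $A_0=A_1=1$. Your route is slightly more self-contained, since it does not depend on the only-sketched derivation of (\ref{eq:Equation1}).

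There is one small slip in your aside. At $b_0=b_2=1$ the coefficients in (\ref{eq:Equation1}) are $\frac{2\sqrt{5}-2}{4\sqrt{5}}$ on $(2+\sqrt{5})^n$ and $\frac{2\sqrt{5}+2}{4\sqrt{5}}$ on $(2-\sqrt{5})^n$, both positive, not $\frac{-2\pm 2\sqrt{5}}{4\sqrt{5}}$.
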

\begin{proof}
We observe that $\widehat{C}_{3, 4, n} = \widehat{C}_{3, 4, n}^{(1, 1, \dots)}$. We evaluate Equation (\ref{eq:Equation1}) for $\vec{b} = (1, 1, \dots)$ and obtain $\frac{1}{2\sqrt{5}}\left( (1+\sqrt{5})(2-\sqrt{5})^n - (1-\sqrt{5})(2+\sqrt{5})^n\right)$.
\end{proof}

We now compute examples of $\widehat{C}_{4, u, n}^{\vec{b}}$. Proposition~\ref{prop:MinHeight} implies that every nonempty $4$-dimensional balanced ballot path of length $4n$ has semisymmetric height at least $4$. We first find the closed formulas for $\widehat{C}_{4, 4, n}^{\vec{b}}$ and $\widehat{C}_{4, 5, n}^{\vec{b}}$ via a proof similar to the one from Proposition~\ref{prop:three}.
\begin{proposition}
    For any integer $n \geq 1$, we have $\widehat{C}_{4, 4, n}^{\vec{b}} = \widehat{C}_{4, 5, n}^{\vec{b}} = (b_0b_3)^n$.
\end{proposition}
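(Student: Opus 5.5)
The plan mirrors the proof of Proposition~\ref{prop:three}. For $k=4$ we have $g_4(\vec{x}) = 3x_1 + x_2 - x_3 - 3x_4$. The semisymmetric up-steps are $\vec{e}_1$ (raising $g_4$ by $3$) and $\vec{e}_2$ (raising it by $1$). The down-steps are $\vec{e}_3$ (lowering it by $1$) and $\vec{e}_4$ (lowering it by $3$). We will show that when $n \geq 1$, the only $4$-dimensional balanced ballot path of length $4n$ with semisymmetric height at most $5$ is $P_0 = (\vec{e}_1,\vec{e}_2,\vec{e}_3,\vec{e}_4)$ repeated $n$ times. By Proposition~\ref{prop:MinHeight} this path has height exactly $4$, so it is the unique path counted by both $\widehat{C}_{4,4,n}^{\vec b}$ and $\widehat{C}_{4,5,n}^{\vec b}$. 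The formula then reduces to computing $sswt_{\vec b}(P_0)$.

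For uniqueness we argue block by block from a diagonal point $(d,d,d,d)$, where $g_4 = 0$. The ballot property forces the next step to be $\vec{e}_1$, reaching height $3$. From there the options are as follows:
\begin{itemize}
\item another $\vec{e}_1$ would reach height $6>5$;
\item $\vec{e}_3$ or $\vec{e}_4$ would violate the ballot property;
\item so the step must be $\vec{e}_2$, reaching $(d+1,d+1,d,d)$ at height $4$.
\end{itemize}
Next, $\vec{e}_1$ would give height $7$, $\vec{e}_2$ is blocked by $x_1 \geq x_2$, and $\vec{e}_4$ is blocked by $x_3 \geq x_4$. So the step is $\vec{e}_3$, reaching height $3$ at $(d+1,d+1,d+1,d)$. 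From there, $\vec{e}_1$ gives height $6>5$, while $\vec{e}_2$ and $\vec{e}_3$ are blocked by the ballot property, so the step is $\vec{e}_4$. The path is now back on the diagonal at $(d+1,d+1,d+1,d+1)$.

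Inducting on the number of completed blocks, with translation invariance of $g_4$ along $(1,1,1,1)$ as used in the proof of Theorem~\ref{thm:periodboundedkdim}, shows the path must be $P_0$. It also ends correctly at $(n,n,n,n)$, since every block returns to the diagonal.

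For the weight, recall that $sswt_{\vec b} = sswt_{\vec b,(1,1,\dots)}$, so only up-steps contribute. In each block, $\vec{e}_1$ starts at height $0$ and contributes $b_0$, and $\vec{e}_2$ starts at height $3$ and contributes $b_3$. Each block therefore contributes $b_0 b_3$, which gives $(b_0b_3)^n$ and agrees with Example~\ref{ex:WhyNotTheSame} at $n=1$.

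The only delicate step is the case check at height $4$, where the bound $u=5$ rather than $u=4$ matters. This is exactly where we must confirm that no step raising the height by $1$ is available: the only such step is $\vec{e}_2$, and it is blocked by the ballot property.
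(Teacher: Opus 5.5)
Your proposal is correct and follows the paper's proof. Both use a forced-step case analysis within each four-step block from a diagonal point, ruling out every alternative step by the ballot property or by exceeding height $5$. Both then repeat this block by block and read off the factors $b_0$ and $b_3$ from the two semisymmetric up-steps in each block. Your version checks the fourth step and the $\vec{e}_2$ move at height $4$ explicitly, where the paper just says ``similarly.''
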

\begin{proof}
    Observe that the path formed by repeating $(\vec{e}_1, \vec{e}_2, \vec{e}_3, \vec{e}_4)$ $n$ times is the unique nonempty 4-dimensional balanced ballot path of length $4n$ with semisymmetric height of at most $5$. We show this by applying the same argument as the one used in Proposition~\ref{prop:three} for the sequence $(\vec{e}_1, \vec{e}_2, \vec{e}_3)$ repeated $n$ times. 
    
    First, observe that the first step of any nonempty $4$-dimensional balanced ballot path must be $\vec {e}_{1} $. If the second step is $\vec{e}_1$, then the semisymmetric height of the resulting intermediate point would become $6$; if it were $\vec {e} _3$ or $\vec{e}_4$, the ballot property from Definition~\ref{defn:k-dim-path} would fail. Thus, the second step must be $\vec{e}_2$. Likewise, the third step must be $\vec {e}_3$. Otherwise, if the second step is $\vec{e}_1$, then the semisymmetric height of the resulting intermediate point would become $7$; if it were $\vec{e}_2$ or $\vec{e}_4$, the ballot property from Definition~\ref{defn:k-dim-path} would fail. Similarly, the fourth step must be $\vec{e}_4$. Removing this initial instance of $(\vec{e}_1, \vec{e}_2, \vec{e}_3, \vec{e}_4)$ and repeating the same argument shows that the only path with semisymmetric height at most $5$ is $P_0$.

Moreover, by Proposition~\ref{prop:MinHeight}, the path $P_0$ has semisymmetric height $4$. Furthermore, it has $n$ semisymmetric up-steps starting at intermediate points of semisymmetric height $0$ and an additional $n$ semisymmetric up-steps starting at intermediate points of semisymmetric height $3$. Therefore, we have $\widehat{C}_{4,4,n}^{\vec b}=\widehat{C}_{4,5,n}^{\vec b}=sswt_{\vec b}(P_0)=(b_0b_3)^n$.
\end{proof}

Having determined $\widehat{C}_{4,4,n}^{\vec b}$ and $\widehat{C}_{4,5,n}^{\vec b}$, we next derive the closed form for $\widehat{C}_{4,6,n}^{\vec b}$.

\begin{theorem}
\label{thm:42}
    Let $\vec{b} = (b_0, b_1, \dots)$ be an infinite sequence of integers. For $n \geq 1$, we have \[\widehat{C}_{4, 6,n}^{\vec{b}} = b_0b_3(2b_0b_3)^{n-1}.\]
    \end{theorem}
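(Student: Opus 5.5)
We follow the strategy of Theorem~\ref{thm:kboundedkdimensional}: cut a $4$-dimensional balanced ballot path of length $4n$ into consecutive $4$-step sub-ballot paths and track the finitely many states that can occur at the cut points, up to translation by $(1,1,1,1)$. Here $g_4(\vec{x}) = 3x_1 + x_2 - x_3 - 3x_4$. The step $\vec{e}_1$ raises the height by $3$, $\vec{e}_2$ by $1$, $\vec{e}_3$ lowers it by $1$, and $\vec{e}_4$ by $3$.

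\textbf{Step 1: list the admissible points.} We first list the points $(a,b,c,0)$ with $a \ge b \ge c \ge 0$ and $3a+b-c \le 6$. Any point with $a \ge 3$ is excluded. For $a=2$ we need $b=c$, and for $a \le 1$ every point is allowed. The points whose coordinate sum is divisible by $4$ are then $\vec{0}$ and $(2,1,1,0)$, up to translation. By analogy with the proof of Theorem~\ref{thm:kboundedkdimensional}, we set
\[
A_n = \widehat{C}_{4,6,n}^{\vec b}, \qquad B_{n-1} = \widehat{C}_{4,6,(2,1,1,0),n}^{\vec b}.
\]
Translation invariance of $g_4$, used as in the proof of Theorem~\ref{thm:periodboundedkdim}, identifies every later sub-SSWCN with one of these two.

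\textbf{Step 2: enumerate the $4$-step blocks by hand.} From $\vec{0}$ there are two blocks.
\begin{itemize}
\item To $(1,1,1,1)$, only $(\vec e_1,\vec e_2,\vec e_3,\vec e_4)$ stays within height $6$.
\item To $(2,1,1,0)$, only $(\vec e_1,\vec e_2,\vec e_3,\vec e_1)$ works. The alternatives pass through $(2,1,0,0)$, which has height $7$.
\end{itemize}
From $(2,1,1,0)$, which has height $6$, both up-steps are forbidden and $\vec e_3$ violates the ballot property, so the first step is forced to be $\vec e_4$. This leads to $(2,1,1,1)$, which has height $3$. From there the options are:
\begin{itemize}
\item $\vec e_1$ leads to $(3,1,1,1)$, which is a dead end within the block.
\item $\vec e_2$ followed by $\vec e_3$ leads to $(2,2,2,1)$. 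The block then ends with $\vec e_4$, reaching $(2,2,2,2)$, or with $\vec e_1$, reaching $(3,2,2,1)$.
\end{itemize}
Recording the starting heights of all up-steps in each block gives the recurrences
\[
A_n = w_{AA} A_{n-1} + w_{AB} B_{n-1}, \qquad B_{n-1} = w_{BA} A_{n-2} + w_{BB} B_{n-2}.
\]
Each coefficient is the product of the $b$-factors of the unique block between the corresponding states.

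\textbf{Step 3: solve the recurrence.} We eliminate $B$ exactly as in Theorem~\ref{thm:kboundedkdimensional} and use the initial values $A_0 = 1$ and $A_1 = b_0 b_3$. We aim to show that the resulting second-order recurrence collapses to $A_n = 2b_0b_3A_{n-1}$ for $n \ge 2$, which gives the claimed closed form by induction.

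\textbf{Main obstacle.} The enumeration is mechanical. The delicate part is the exact weights in Step 2, because the stated formula needs the four coefficients to combine so that the recurrence becomes geometric with ratio $2b_0b_3$. My first-pass computation gives
\[
w_{AA} = b_0 b_3,\quad w_{AB} = b_0 b_3^2,\quad w_{BA} = b_3,\quad w_{BB} = b_3^2.
\]
With these values the recurrences reproduce the count $2^{n-1}$ when $\vec b = (1,1,\dots)$. For general $\vec b$, however, they give
\[
A_2 = b_0^2 b_3^2 + b_0 b_3^3 \quad\text{rather than}\quad 2b_0^2b_3^2.
\]
Before finishing, I would therefore recheck the starting height of each up-step, in particular the second $\vec e_1$ in the block from $\vec 0$ to $(2,1,1,0)$, which starts at height $3$. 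If the discrepancy persists, the statement would need to be corrected to the formula produced by these recurrences. The transfer-matrix argument itself carries over unchanged.
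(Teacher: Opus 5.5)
Your block enumeration and all four weights are correct. They give exactly the recurrences $A_n = b_0b_3A_{n-1} + b_0b_3^2B_{n-1}$ and $B_{n-1} = b_3A_{n-2} + b_3^2B_{n-2}$ that the paper's proof uses. The recheck you plan only confirms them: the second $\vec{e}_1$ in $(\vec{e}_1,\vec{e}_2,\vec{e}_3,\vec{e}_1)$ starts at $(1,1,1,0)$, and $g_4(1,1,1,0)=3$.

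So the step that fails is your Step 3. The recurrence cannot collapse to $A_n = 2b_0b_3A_{n-1}$, because the statement itself is false for $n \ge 2$. You can see this directly at $n=2$. The only two $4$-dimensional balanced ballot paths of length $8$ with semisymmetric height at most $6$ are:
\begin{itemize}
\item $(\vec{e}_1,\vec{e}_2,\vec{e}_3,\vec{e}_4,\vec{e}_1,\vec{e}_2,\vec{e}_3,\vec{e}_4)$, of weight $b_0^2b_3^2$;
\item $(\vec{e}_1,\vec{e}_2,\vec{e}_3,\vec{e}_1,\vec{e}_4,\vec{e}_2,\vec{e}_3,\vec{e}_4)$, whose up-steps start at heights $0,3,3,3$, so its weight is $b_0b_3^3$.
\end{itemize}
Hence $\widehat{C}^{\vec{b}}_{4,6,2} = b_0^2b_3^2 + b_0b_3^3$. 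For $b_0=1$, $b_3=2$ this is $12$, not $2b_0^2b_3^2 = 8$. You should state this as a counterexample, not as a suspected miscalculation.

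The paper's proof sets up the same recurrences and correctly obtains $B_{n-1} = A_{n-1}/b_0$. It then substitutes wrongly: $b_0b_3^2\cdot A_{n-1}/b_0 = b_3^2A_{n-1}$. The true recurrence is therefore $A_n = b_3(b_0+b_3)A_{n-1}$, not $2b_0b_3A_{n-1}$.

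There is also a division-free way to see this; the paper's elimination silently needs $b_0b_3 \neq 0$. Multiply $B_n = b_3A_{n-1} + b_3^2B_{n-1}$ by $b_0$. The result is exactly the right-hand side of the $A_n$ recurrence, so $A_n = b_0B_n$ for all $n \ge 1$. Combinatorially, from $\vec{0}$ the first step is forced to be $\vec{e}_1$, with weight $b_0$. From $(2,1,1,0)$ it is forced to be $\vec{e}_4$, with weight $1$. These land at $(1,0,0,0)$ and its translate $(2,1,1,1)$.

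Substituting $b_0B_{n-1} = A_{n-1}$ into the first recurrence gives $A_n = (b_0b_3 + b_3^2)A_{n-1}$ for $n\ge 2$. Hence
\[
\widehat{C}^{\vec{b}}_{4,6,n} = b_0\,b_3^{\,n}\,(b_0+b_3)^{n-1} \qquad (n \ge 1).
\]
This agrees with the stated formula when $b_0 = b_3$, and in particular it still gives the unweighted count $2^{n-1}$. It does not agree in general. Your transfer-matrix argument is sound; its correct conclusion is this corrected formula, not the one in the statement.
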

    
    \begin{proof}
    
        Let $A_n = \widehat{C}_{4, 6, n}^{\vec{b}}$. Let $B_{n-1} $ be $\widehat{C}_{4, 6, (2, 1, 1, 0), n}^{\vec{b}}$, the sum of semisymmetric weights $sswt_{\vec{b}}(P)$ over sub-ballot paths $P$ from $(2, 1, 1, 0)$ to $(n, n, n,n)$ that do not exceed semisymmetric height $6$. We prove the two linear recurrences $A_n = b_0b_3A_{n-1}+b_0b_3^2B_{n-1}$ and $B_{n-1} = b_3A_{n-2}+b_3^2B_{n-2}$ for $n \geq 2$.

To begin, there is only one sub-ballot path from $(0, 0, 0, 0)$ to $(1,1,1,1)$: this path is $(\vec{e}_1, \vec{e}_2, \vec{e}_3, \vec{e}_4)$, which has semisymmetric weight $b_0b_3$. Hence, we obtain the summand $b_0b_3A_{n-1}$. Also, there is only one sub-ballot path from $(0, 0, 0, 0)$ to $(2,1,1,0)$: this path is $(\vec{e}_1, \vec{e}_2, \vec{e}_3, \vec{e}_1)$, which has semisymmetric weight $b_0b_3^2$. Hence, we obtain the summand $b_0b_3^2B_{n-1}$. Thus, we obtain $A_n = b_0b_3A_{n-1}+b_0b_3^2B_{n-1}$.

Next, there is exactly one sub-ballot path from $(2, 1, 1, 0)$ to $(2, 2, 2, 2)$: this path is $(\vec{e}_4, \vec{e}_2, \vec{e}_3, \vec{e}_4)$, which has semisymmetric weight $b_3$. Thus, we get the summand $b_3\widehat{C}_{4, 6, (2, 2, 2, 2), n}^{\vec{b}} = b_3\widehat{C}_{4, 6, n-2}^{\vec{b}} = b_3A_{n-2}$. Next, there is exactly one sub-ballot path from $(2, 1, 1, 0)$ to $(3, 2, 2, 1)$ that does not exceed semisymmetric height $6$: that path is $(\vec{e}_4, \vec{e}_2, \vec{e}_3, \vec{e}_1)$, which has semisymmetric weight $b_3^2$. This observation yields the summand $b_3^2\widehat{C}_{4, 6, (3, 2, 2, 1), n}^{\vec{b}} = b_3^2\widehat{C}_{4, 6, (2, 1, 1, 0), n-1}^{\vec{b}} = b_3^2 B_{n-2}$. In sum, we obtain $B_{n-1} = b_3A_{n-2}+b_3^2B_{n-2}$.

From $
A_{n-1} = b_0b_3 A_{n-2} + b_0 b_3^2 \, B_{n-2}$,
we obtain $B_{n-2} = \frac{A_{n-1} - b_0 b_3 A_{n-2}}{b_0 b_3^2}.$ Substituting this into $B_{n-1} = b_3A_{n-2}+b_3^2B_{n-2}$, we get $B_{n-1} = \frac{1}{b_0} A_{n-1}$. Substituting the last identity into $A_n = b_0b_3A_{n-1}+b_0b_3^2$, we obtain $A_n = (2b_0b_3)A_{n-1}$ for all $n \geq 2$. Together with the initial condition $A_{1} = b_0b_3$, the recursive relation yields $\widehat{C}_{4, 6, 1}^{\vec{b}}  = A_n = b_0b_3(2b_0b_3)^{n-1}$ for $n \geq 1$.
 \end{proof}

A special case of this identity is the following.
\begin{corollary}
\label{cor:2^n}
    For any integer $n\geq 1$, we have
    $\widehat{C}_{4, 6,n}= 2^{n-1}.$
\end{corollary}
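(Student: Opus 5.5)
The plan is to specialize Theorem~\ref{thm:42} to the all-ones sequence. By Definition~\ref{def:bounded-and-height} and Definition~\ref{def:abbrv2}, $\widehat{C}_{4,6,n}$ counts the $4$-dimensional balanced ballot paths of length $4n$ with semisymmetric height at most $6$. This count equals $\widehat{C}_{4,6,n}^{\vec{b}}$ with $\vec{b}=(1,1,\dots)$, because every semisymmetric weight $sswt_{\vec{b}}(P)$ is then a product of ones and equals $1$.

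Next I set $b_0=b_3=1$ in the closed form $\widehat{C}_{4,6,n}^{\vec{b}} = b_0b_3(2b_0b_3)^{n-1}$, valid for $n\ge 1$. This gives $\widehat{C}_{4,6,n} = 1\cdot 2^{n-1} = 2^{n-1}$ for all $n \geq 1$, which is exactly the claim.

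The one point that needs care is that the proof of Theorem~\ref{thm:42} eliminated $B_{n-2}$ by dividing by $b_0b_3^2$. For $\vec{b}=(1,1,\dots)$ this divisor is $1$, so the specialization is legitimate. Even so, I would double-check the result directly from the two linear recurrences $A_n = A_{n-1}+B_{n-1}$ and $B_{n-1}=A_{n-2}+B_{n-2}$, which hold for all $n \ge 2$.

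These recurrences give $B_{n-1}=A_{n-1}$ by induction. For the base case $n=2$, we have $B_0 = A_0 = 1$: the only $4$-step sub-ballot path from $(2,1,1,0)$ to $(2,2,2,2)$ is the one identified in the proof, and $A_0=1$ counts the empty path. For the inductive step, $B_{n-1} = A_{n-2}+B_{n-2} = A_{n-2}+A_{n-2}$ by hypothesis, and this equals $A_{n-1}$ because $A_{n-1} = A_{n-2}+B_{n-2}$. Hence $A_n = 2A_{n-1}$ for $n \ge 2$.

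Together with $A_1 = 1$ (the unique path $(\vec{e}_1,\vec{e}_2,\vec{e}_3,\vec{e}_4)$), this yields $A_n = 2^{n-1}$. I do not expect any step to be a real obstacle.
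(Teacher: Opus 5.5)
Your main argument, specializing Theorem~\ref{thm:42} at $\vec{b}=(1,1,\dots)$, is exactly the paper's proof. However, Theorem~\ref{thm:42} is not correct as stated for general $\vec{b}$, so this route works only because of a coincidence at $b_0=b_3=1$.

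The two recurrences in the theorem's proof are right. They give $A_{n-1}=b_0\bigl(b_3A_{n-2}+b_3^2B_{n-2}\bigr)=b_0B_{n-1}$, with no division needed. Substituting into $A_n=b_0b_3A_{n-1}+b_0b_3^2B_{n-1}$ gives $A_n=(b_0b_3+b_3^2)A_{n-1}$, not $2b_0b_3A_{n-1}$.

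You can see this directly at $n=2$. The only two admissible paths are $(\vec{e}_1,\vec{e}_2,\vec{e}_3,\vec{e}_4,\vec{e}_1,\vec{e}_2,\vec{e}_3,\vec{e}_4)$ and $(\vec{e}_1,\vec{e}_2,\vec{e}_3,\vec{e}_1,\vec{e}_4,\vec{e}_2,\vec{e}_3,\vec{e}_4)$. Their weights are $b_0^2b_3^2$ and $b_0b_3^3$, whereas the theorem predicts $2b_0^2b_3^2$. The correct closed form $b_0b_3(b_0b_3+b_3^2)^{n-1}$ agrees with the stated one when $b_0=b_3=1$, which is why the corollary survives. So the step that needs care is this substitution, not the division by $b_0b_3^2$ that you flagged.

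Consequently, your direct check from $A_n=A_{n-1}+B_{n-1}$ and $B_{n-1}=A_{n-2}+B_{n-2}$ is what actually proves the statement, and it should be presented as the proof. Its base case, though, is mis-indexed. $B_0=\widehat{C}_{4,6,(2,1,1,0),1}$ counts sub-ballot paths from $(2,1,1,0)$ to $(1,1,1,1)$, and there are none, so $B_0=0$. Indeed, $B_0=1$ would force $A_1=A_0+B_0=2$, contradicting your $A_1=1$. The path from $(2,1,1,0)$ to $(2,2,2,2)$ that you cite computes $B_1=1$, so the base case you want is $B_1=A_1=1$.

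No induction is needed. For $m\ge 2$, the recurrences $A_m=A_{m-1}+B_{m-1}$ and $B_m=A_{m-1}+B_{m-1}$ have identical right-hand sides, so $B_m=A_m$. Together with $B_1=A_1=1$, this gives $A_n=A_{n-1}+B_{n-1}=2A_{n-1}$ for $n\ge 2$, and hence $A_n=2^{n-1}$.
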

\begin{proof}
  We have $\widehat{C}_{4, 6, n}= \widehat{C}_{4, 6, n}^{(1, 1, \dots)}$. Then, we evaluate the expression in Theorem~\ref{thm:42} for $\vec{b} =(1, 1, \dots)$ to obtain the formula.
\end{proof}

For $k=5$, Proposition~\ref{prop:MinHeight} implies that the smallest admissible value is $u=6$. We therefore determine $\widehat{C}_{5,6,n}^{\vec b}$ and $\widehat{C}_{5,7,n}^{\vec b}$. The argument is identical to that of Proposition~\ref{prop:three}.

\begin{proposition}
    For any integer $n \geq 1$, we have $\widehat{C}_{5, 6, n}^{\vec{b}} = \widehat{C}_{5, 7, n}^{\vec{b}} = (b_0b_4)^n$.
\end{proposition}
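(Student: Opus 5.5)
We mirror the proofs of Proposition~\ref{prop:three} and the $k=4$ analogue. Let $P_0$ be the path obtained by repeating $(\vec e_1,\vec e_2,\vec e_3,\vec e_4,\vec e_5)$ $n$ times. We will show that $P_0$ is the unique nonempty $5$-dimensional balanced ballot path of length $5n$ with semisymmetric height at most $7$, and then compute its weight. Here $g_5(\vec x)=4x_1+2x_2-2x_4-4x_5$, the semisymmetric up-steps are $\vec e_1,\vec e_2$, the neutral step is $\vec e_3$, and the down-steps are $\vec e_4,\vec e_5$.

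\textbf{Uniqueness.} We argue by induction on blocks. Suppose the current intermediate point is $(d,d,d,d,d)$, so its height is $0$.
\begin{itemize}
\item The next step must be $\vec e_1$ by the ballot property, reaching height $4$.
\item From $(d+1,d,d,d,d)$, a second $\vec e_1$ would give height $8>7$, and $\vec e_3,\vec e_4,\vec e_5$ violate the ballot property. So the step is $\vec e_2$, reaching height $6$.
\item From $(d+1,d+1,d,d,d)$, the step $\vec e_1$ gives height $10$ and $\vec e_2$ gives height $8$, both exceeding $7$. The steps $\vec e_4,\vec e_5$ violate the ballot property. So the step is $\vec e_3$, and the height stays at $6$.
\item From $(d+1,d+1,d+1,d,d)$, the up-steps $\vec e_1,\vec e_2$ give heights $10$ and $8$, both too large. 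The step $\vec e_3$ would make $x_3>x_2$, and $\vec e_5$ would make $x_5>x_4$, both breaking the ballot property. So the step is $\vec e_4$, reaching height $4$.
\item From $(d+1,d+1,d+1,d+1,d)$, the up-steps give heights $8$ and $6$. The step $\vec e_1$ is excluded by height. The steps $\vec e_2,\vec e_3,\vec e_4$ break the ballot property, since all of $x_1,\dots,x_4$ are equal.
\end{itemize}

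The last case is the main obstacle: the step $\vec e_5$ (back to height $0$) must be forced, yet $\vec e_1$ is ballot-legal at height $8$. Since $8>7$, $\vec e_1$ is excluded, so the step is forced to be $\vec e_5$, returning to $(d+1,\dots,d+1)$. Induction on the number of completed blocks then forces $P_0$.

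By Proposition~\ref{prop:MinHeight}, $g_5(P_0)=6\le 7$. Hence both $\widehat{C}_{5,6,n}^{\vec b}$ and $\widehat{C}_{5,7,n}^{\vec b}$ consist of the single term $sswt_{\vec b}(P_0)$.

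\textbf{Weight.} In each block, $\vec e_1$ starts at height $0$ and contributes $b_0$, and $\vec e_2$ starts at height $4$ and contributes $b_4$. Since $\vec c=(1,1,\dots)$, the remaining steps contribute $1$. So $sswt_{\vec b}(P_0)=(b_0b_4)^n$.
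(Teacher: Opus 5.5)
Your proof is correct and follows the paper's approach. You show by a step-by-step forcing argument that repeating $(\vec e_1,\dots,\vec e_5)$ is the unique path of semisymmetric height at most $7$, then read off the weight $(b_0b_4)^n$. Your explicit case analysis fills in what the paper only asserts by analogy with Proposition~\ref{prop:three}, in particular the observation that $\vec e_1$ from $(d+1,d+1,d+1,d+1,d)$ is ballot-legal and is excluded only because it reaches height $8$.
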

\begin{proof}
    Observe that the path formed by repeating $(\vec{e}_1, \vec{e}_2, \vec{e}_3, \vec{e}_4, \vec{e}_5)$ $n$ times is the only nonempty 5-dimensional balanced ballot path of length $5n$ with semisymmetric height of at most $7$. We can verify this by applying the same argument as the one used in Proposition~\ref{prop:three} for the sequence $(\vec{e}_1, \vec{e}_2, \vec{e}_3)$ repeated $n$ times. 

    By Proposition~\ref{prop:MinHeight}, the path $P_0$ has semisymmetric height $6$, with $n$ semisymmetric up-steps starting at intermediate points of semisymmetric height $0$ and another $n$ starting at intermediate points of semisymmetric height $4$. This yields $\widehat{C}_{5,6,n}^{\vec b}=\widehat{C}_{5,7,n}^{\vec b}=sswt_{\vec b}(P_0)=(b_0b_4)^n$.
\end{proof}

For $5$-dimensional, $8$-bounded SSWCNs, we prove a similar statement. The proof follows the same
strategy as Theorem~\ref{thm:42}.

\begin{theorem}\label{thm:58}
    For the infinite sequence of integers $\vec{b} = (b_0, b_1, \dots)$ and any integer $n \geq 1$, we obtain
     $$\widehat{C}_{5,8,n}^{\vec{b}} = b_0b_4(2b_0b_4)^{n-1} .$$
\end{theorem}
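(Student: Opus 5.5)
The plan is to repeat the proof of Theorem~\ref{thm:42}. We cut a $5$-dimensional balanced ballot path of semisymmetric height at most $8$ into consecutive blocks of $5$ steps and use the $5$-dimensional $8$-bounded sub-SSWCNs of Definition~\ref{def:subSSWCN}. Here
\[
g_5(\vec x)=4x_1+2x_2-2x_4-4x_5 .
\]
So $\vec e_1$ and $\vec e_2$ are semisymmetric up-steps raising the height by $4$ and $2$, $\vec e_3$ is a neutral step, and $\vec e_4$ and $\vec e_5$ are down-steps lowering it by $2$ and $4$. Set $A_n=\widehat{C}_{5,8,n}^{\vec b}$ and $B_{n-1}=\widehat{C}_{5,8,(2,1,1,1,0),n}^{\vec b}$. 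The target is the pair of recurrences
\[
A_n=b_0b_4A_{n-1}+b_0b_4^2B_{n-1},\qquad B_{n-1}=b_4A_{n-2}+b_4^2B_{n-2}\qquad (n\ge 2).
\]

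\textbf{Step 1: first block from $\vec 0$.} I do a forced-step analysis like in Proposition~\ref{prop:three}, using the ballot property and the bound $g_5\le 8$.
\begin{itemize}
\item The first step must be $\vec e_1$ (height $4$).
\item Next, $\vec e_1$ would reach height $8$ at $(2,0,0,0,0)$. After that, $\vec e_1$ exceeds $8$ and $\vec e_2$ gives $10$, so this branch dies. Hence the second step is $\vec e_2$ (height $6$).
\item Similarly the third step is $\vec e_3$ (height $6$) and the fourth is $\vec e_4$ (height $4$).
\item The fifth step is either $\vec e_5$, ending at $(1,1,1,1,1)$ with weight $b_0b_4$, or $\vec e_1$, ending at $(2,1,1,1,0)$ of height $8$ with weight $b_0b_4^2$.
\end{itemize}
In the block ending at $(2,1,1,1,0)$, the up-steps start at heights $0,4,4$.

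\textbf{Step 2: block from $(2,1,1,1,0)$.} At height $8$ the ballot property forbids $\vec e_3$ and $\vec e_4$, and the height bound forbids $\vec e_1$ and $\vec e_2$. So the first step is $\vec e_5$ (height $4$).
\begin{itemize}
\item From $(2,1,1,1,1)$, the step $\vec e_1$ leads to $(3,1,1,1,1)$. There only $\vec e_2$ is allowed, and it overshoots to height $10$, so this branch dies.
\item Hence the path continues with $\vec e_2$, then $\vec e_3$, then $\vec e_4$, reaching $(2,2,2,2,1)$ at height $4$.
\item It then finishes with $\vec e_5$ to $(2,2,2,2,2)$, weight $b_4$, or with $\vec e_1$ to $(3,2,2,2,1)$, weight $b_4^2$.
\end{itemize}
Translating by multiples of $(1,1,1,1,1)$ preserves $g_5$ and the ballot property. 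This identifies the remaining sums with $A_{n-1}$, $A_{n-2}$ and $B_{n-2}$, giving both recurrences.

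\textbf{Step 3: solving the recurrences.} To avoid dividing by $b_0$ or $b_4$, as the proof of Theorem~\ref{thm:42} does, I would prove $b_0B_{n-1}=A_{n-1}$ for $n\ge 2$ directly. Multiplying the second recurrence by $b_0$ gives $b_0b_4A_{n-2}+b_0b_4^2B_{n-2}$, which equals $A_{n-1}$ by the first recurrence for $n\ge 3$. The case $n=2$ is a direct check: $b_0B_1=b_0b_4=A_1$. Substituting into the first recurrence yields $A_n=2b_0b_4A_{n-1}$. Together with $A_1=b_0b_4$ from the Proposition above, this gives the formula.

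\textbf{Main obstacle.} The delicate part is the exhaustiveness of the forced-step analysis in Steps 1 and 2. One must check that every excursion to height $8$ other than the two listed is a dead end within the block. One must also check that no path reaches a point of height at most $8$, with coordinate sum divisible by $5$, outside the translates of $\vec 0$ and $(2,1,1,1,0)$. I would verify this by exhausting all branches at heights $6$ and $8$ as sketched above.
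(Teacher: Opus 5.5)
Your block analysis in Steps 1 and 2 is correct. I checked every branch; the one imprecision is that at $(3,1,1,1,1)$ both $\vec{e}_1$ and $\vec{e}_2$ are ballot-admissible, overshooting to heights $12$ and $10$, so the branch still dies. These steps give exactly the two recurrences used in the paper's proof. Your division-free derivation of $b_0B_{n-1}=A_{n-1}$ is also correct and cleaner than the paper's division by $b_0b_4^2$.

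The argument breaks at the last substitution. Plugging $b_0B_{n-1}=A_{n-1}$ into the first recurrence gives
\[
A_n=b_0b_4A_{n-1}+b_4^2\,(b_0B_{n-1})=(b_0b_4+b_4^2)\,A_{n-1},
\]
not $2b_0b_4A_{n-1}$. The coefficient $2b_0b_4$ would need $b_4B_{n-1}=A_{n-1}$, which does not hold. The paper's proof makes the identical slip: substituting its $B_{n-1}=A_{n-1}/b_0$ also produces $b_0b_4+b_4^2$. Its proof of Theorem~\ref{thm:42} has the same error.

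This cannot be patched, because the statement is false for general $\vec{b}$. Your own enumeration shows that for $n=2$ there are exactly two admissible paths:
\begin{itemize}
\item $(\vec{e}_1,\dots,\vec{e}_5,\vec{e}_1,\dots,\vec{e}_5)$, of weight $b_0^2b_4^2$;
\item $(\vec{e}_1,\vec{e}_2,\vec{e}_3,\vec{e}_4,\vec{e}_1,\vec{e}_5,\vec{e}_2,\vec{e}_3,\vec{e}_4,\vec{e}_5)$, whose up-steps start at heights $0,4,4,4$, of weight $b_0b_4^3$.
\end{itemize}
Hence $\widehat{C}_{5,8,2}^{\vec{b}}=b_0^2b_4^2+b_0b_4^3$. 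For $b_0=1$, $b_4=2$ this equals $12$, not the claimed $8$.

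What your recurrences actually prove is
\[
\widehat{C}_{5,8,n}^{\vec{b}}=b_0b_4\,(b_0b_4+b_4^2)^{n-1}=b_0b_4^{\,n}(b_0+b_4)^{n-1}\qquad(n\ge 1).
\]
This agrees with the stated formula only in special cases such as $b_0=b_4$ or $b_0b_4=0$. In particular, the corollary $\widehat{C}_{5,8,n}=2^{n-1}$ for $\vec{b}=(1,1,\dots)$ remains valid.

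Your ``main obstacle'' is not actually an issue. The two block analyses from $\vec{0}$ and from $(2,1,1,1,0)$, together with translation by $(1,1,1,1,1)$, already account for every path.
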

\begin{proof}

 Let $A_n = \widehat{C}_{5, 8, n}^{\vec{b}}$, and let $B_{n-1}=\widehat{C}_{5, 8, (2,1, 1,1, 0), n}^{\vec{b}}$, the sum of semisymmetric weights $sswt_{\vec{b}}(P)$ over all sub-ballot paths $P$ from $(2, 1,1, 1, 0)$ to $(n, n, n,n,n)$ that do not exceed semisymmetric height $8$. We prove the recursive relations $A_n = b_0 b_4 A_{n-1}+b_0b_4^2B_{n-1}$ and $B_{n-1} = b_4 A_{n-2}+b_4^2 B_{n-2}$ for all $n \geq 2$.

  First, there is only one sub-ballot path from $(0, 0, 0, 0, 0)$ to $(1, 1, 1, 1, 1)$ not exceeding semisymmetric height $8$, and it has weight $b_0b_4$. Then, note that there is only one sub-ballot path from $(0, 0, 0, 0, 0)$ to $(2, 1, 1, 1, 0)$  not exceeding semisymmetric height $8$. That path has semisymmetric weight $b_0b_4^2$. Thus, we obtain $A_n = b_0b_4A_{n-1}+ b_0b_4^2B_{n-1}$.

  Now, note that there is only one sub-ballot path from $(2, 1, 1, 1, 0)$ to $(2, 2, 2, 2, 2)$ with semisymmetric height at most $8$; it contributes summand $b_4A_{n-2}$. Likewise, there is only one sub-ballot path from $(2, 1, 1, 1, 0)$ to $(3, 2, 2, 2, 1)$; it contributes summand $b_4^2B_{n-2}$. In sum, we obtain $B_{n-1} = b_4 A_{n-2}+b_4^2 B_{n-2}$.

From $A_n=b_0b_4A_{n-1}+b_0b_4^2B_{n-1}$, we have $B_{n-2} = \frac{A_{n-1}-b_0b_4A_{n-2}}{b_0b_4^2}$. Substituting into $B_{n-1} = b_4A_{n-2} + b_4^2B_{n-2}$, we get $B_{n-1} =  \frac{A_{n-1}}{b_0}$. Substituting that equality into $A_n=b_0b_4A_{n-1}+b_0b_4^2B_{n-1}$ yields $A_n = (2b_0b_4)\,A_{n-1}$ for $n \geq 2$.  We combine this recursive relation with the initial condition $A_1 = b_0b_4$ to obtain $\widehat{C}_{5, 8, n}^{\vec{b}} = A_n = b_0b_4 (2b_0b_4)^{n-1}$.
\end{proof}

We obtain the following formula for the number of $5$-dimensional balanced ballot paths of length $5n$ and semisymmetric height of at most $8$. 
\begin{corollary}
     For any integer $n \geq 1$, we have
    $\widehat{C}_{5, 8, n}= 2^{n-1}.$
\end{corollary}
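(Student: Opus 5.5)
This is a direct specialization of Theorem~\ref{thm:58}, in the same way that Corollary~\ref{cor:2^n} follows from Theorem~\ref{thm:42}. The plan is as follows.

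First, by Definition~\ref{def:bounded-and-height} and Definition~\ref{def:abbrv2}, the count $\widehat{C}_{5,8,n}$ equals the weighted sum $\widehat{C}_{5,8,n}^{(1,1,\dots)}$. This is because when $\vec{b}=(1,1,\dots)$ and $\vec{c}=(1,1,\dots)$, every semisymmetric weight $sswt_{\vec{b}}(P)$ is a product of ones. Each path with semisymmetric height at most $8$ therefore contributes exactly $1$.

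Second, substitute $b_0=b_4=1$ into the formula $\widehat{C}_{5,8,n}^{\vec{b}} = b_0b_4(2b_0b_4)^{n-1}$ of Theorem~\ref{thm:58}, which is valid for $n\geq 1$. This gives $1\cdot 2^{n-1}=2^{n-1}$, as claimed.

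There is no real obstacle here. The only point to check is that the hypothesis $n\geq 1$ matches the range in Theorem~\ref{thm:58}. It does, so no separate treatment of $n=0$ is needed.

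One caveat: the derivation in Theorem~\ref{thm:58} divides by $b_0$ and $b_4$, so one might worry whether it applies in general. For $\vec{b}=(1,1,\dots)$ these are nonzero, so the division is harmless and the specialization is fully justified.
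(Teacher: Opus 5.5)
Your argument is the paper's own proof: identify $\widehat{C}_{5,8,n}$ with $\widehat{C}^{(1,1,\dots)}_{5,8,n}$ and substitute into Theorem~\ref{thm:58}. The conclusion is true, but the formula you substitute into is not. So your claim that the specialization is ``fully justified'' fails, and your caveat about dividing by $b_0$ and $b_4$ checks the wrong failure mode.

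The two recurrences in the proof of Theorem~\ref{thm:58} are correct: $A_n=b_0b_4A_{n-1}+b_0b_4^2B_{n-1}$ and $B_{n-1}=b_4A_{n-2}+b_4^2B_{n-2}$. The right-hand side of the first is $b_0$ times that of the second with $n$ replaced by $n+1$, so $A_m=b_0B_m$ for $m\ge 1$, with no division needed. Hence $b_0b_4^2B_{n-1}=b_4^2A_{n-1}$, not $b_0b_4A_{n-1}$ as in the paper's final substitution. The true recurrence is $A_n=b_4(b_0+b_4)A_{n-1}$, which gives $\widehat{C}^{\vec{b}}_{5,8,n}=b_0b_4\,(b_4(b_0+b_4))^{n-1}$.

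To see the discrepancy concretely, take $n=2$. The only paths of semisymmetric height at most $8$ are $(\vec{e}_1,\vec{e}_2,\vec{e}_3,\vec{e}_4,\vec{e}_5,\vec{e}_1,\vec{e}_2,\vec{e}_3,\vec{e}_4,\vec{e}_5)$ and $(\vec{e}_1,\vec{e}_2,\vec{e}_3,\vec{e}_4,\vec{e}_1,\vec{e}_5,\vec{e}_2,\vec{e}_3,\vec{e}_4,\vec{e}_5)$. Their weights are $b_0^2b_4^2$ and $b_0b_4^3$, whereas Theorem~\ref{thm:58} predicts $2b_0^2b_4^2$. The analogous Theorem~\ref{thm:42}, which you cite as the model, has the identical slip.

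The corollary survives only because $b_0=b_4=1$, where the stated and corrected formulas coincide. To make your proof sound, either cite the corrected closed form or specialize before solving. At $\vec{b}=(1,1,\dots)$ the recurrences become $A_n=A_{n-1}+B_{n-1}$ and $B_n=A_{n-1}+B_{n-1}$ for $n\ge 1$, with $A_0=1$ and $B_0=0$. Hence $B_n=A_n$ for $n\ge 1$, so $A_n=2A_{n-1}$ for $n\ge 2$, and with $A_1=1$ this gives $2^{n-1}$.

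Equivalently, each of the four block transitions between translates of $(0,0,0,0,0)$ and $(2,1,1,1,0)$ is realized by exactly one five-step sub-ballot path of semisymmetric height at most $8$. So a path of length $5n$ is determined by freely choosing the type of each of its first $n-1$ block endpoints, which counts $2^{n-1}$ paths directly.
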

\begin{proof}
    We obtain this by first recalling from Section~\ref{sec:ssheight} that $\widehat{C}_{5, 8, n} = \widehat{C}_{5, 8, n}^{(1, 1, \dots)}$ and then evaluating the formula from Theorem~\ref{thm:58} for $\vec{b} = (1, 1, \dots)$.
\end{proof}

Since we see that $\widehat{C}_{4, 6, n} = 2^{n-1}$ and $\widehat{C}_{5, 8, n} = 2^{n-1}$, we suspect that there  are more pairs of integers $k$ and $u$ that yield $\widehat{C}_{k, u, n} = 2^{n-1}$ for $n \geq 1$. Thus, we end the section with a small open problem.
\begin{openproblem}
    Find all integer pairs \( k \geq 2 \) and \( u \geq \left\lceil\frac{k}{2}\right\rceil \cdot \left\lfloor\frac{k}{2}\right\rfloor \) such that \( \widehat{C}_{k, u, n} = 2^{n-1} \).
\end{openproblem}
\section{The \texorpdfstring{$k$}{k}-dimensional Semisymmetric Height Triangle}\label{sec:heighttriangles}
Since we introduced the $k$-dimensional $u$-bounded semisymmetric Catalan number $\widehat{C}_{k, u, n}$ (Definition~\ref{defn:k-dims-boundCat}), the refinement of balanced ballot paths, it is natural to refine the enumeration of $k$-dimensional balanced ballot paths, namely the $k$-dimensional Catalan numbers, by this parameter. For Dyck paths, which correspond to the case $k=2$, the resulting sequence is A080936 in the OEIS \cite{oeis}. In our previous paper \cite{MR5034574}, we counted $k$-dimensional balanced ballot paths $P$ by their height $h_k(P)$; this motivates the semisymmetric height triangle, which refines these paths by semisymmetric height.

\begin{definition} We define $D_{k,u,n}^{\,\prime}$ to be the number of $k$-dimensional ballot paths of length $kn$ such that their maximal semisymmetric height is exactly $u$, i.e. for at least one intermediate point $g_k(\vec{x})=u$, but for no points $g_k(\vec{x}) > u$. For fixed $k$, we denote the table of values of $D_{k, u, n}^{\,\prime}$ over $u$ and $n$. The table of values of $D_{k,u , n}^{\,\prime}$ is the \textbf{$k$-dimensional  semisymmetric height triangle}. \end{definition}
In particular, for the $2$-dimensional case, the triangle coincides with the sequence counting Dyck paths by exact height (A080936 of the OEIS \cite{oeis}). Indeed, consider the bijective map between $2$-dimensional balanced ballot paths and Dyck paths (from Section~\ref{sec:bbp}): given balanced ballot path $P$, replace each instance of step $\vec{e}_1$ with step $(1, 1)$ and each instance of step $\vec{e}_2$ with step $(1, -1)$. The semisymmetric height of the $i$th intermediate point of $P$ is equal to the height of the $i$th intermediate point of the corresponding Dyck path. Hence, a $2$-dimensional balanced ballot path $P$ has semisymmetric height $u$ if and only if its corresponding Dyck path $P'$ has height (maximum $y$-coordinate attained by intermediate point) $u$.

\begin{remark}
We use the $D_{k, u, n}'$ to avoid confusion with $D_{k, u, n}$, the latter being the notation used for the enumeration of $k$-dimensional balanced ballot paths by their exact height $h_k$ from \cite{MR5034574}.\end{remark}

From the definition and results in Section~\ref{sec:BasicObservations}, we make several observations on the nonzero entries of $D_{k, u, n}'$. By the above definition, we obtain the equality $$D_{k, u, n}' = \widehat{C}_{k, u, n} - \widehat{C}_{k, u-1, n}.$$

Note that entry $D_{k,\left\lfloor  \frac{k}{2} \right\rfloor \left\lceil  \frac{k}{2} \right\rceil n, n}'$ is the rightmost nonzero entry of the $n$th row of the $k$-dimensional semisymmetric height triangle. Indeed, Proposition~\ref{prop:maxheight} implies that $\left\lfloor  \frac{k}{2} \right\rfloor \left\lceil  \frac{k}{2} \right\rceil n$ is the largest possible semisymmetric height of a $k$-dimensional balanced ballot path of $kn$ steps. Furthermore, entry $D_{k,\left\lfloor  \frac{k}{2} \right\rfloor \left\lceil  \frac{k}{2} \right\rceil, n}^{\, \prime}$ is the leftmost entry of the $n$th row of the $k$-dimensional semisymmetric height triangle. This is because Proposition~\ref{prop:MinHeight} tells us that $\left\lfloor  \frac{k}{2} \right\rfloor \left\lceil  \frac{k}{2} \right\rceil$ is the smallest possible semisymmetric height of a $k$-dimensional balanced ballot path of $kn$ steps

We first examine the 3-dimensional semisymmetric height triangle, which is the new sequence A393573 of the OEIS \cite{oeis}. Using the Python script in the OEIS sequence, we compute the first 5 nonzero rows of the 3-dimensional semisymmetric height triangle for $k=3$; these values of $D_{3, u, n}^{\,\prime}$ are illustrated in Table~\ref{table:3dtriangle2}. From this table, we observe that the 3-dimensional semisymmetric height triangle is distinct from the enumeration of $3$-dimensional balanced ballot paths by height from our previous paper \cite{MR5034574} (A387912 in the OEIS \cite{oeis}).

\begin{table}[H]
    \centering
$\begin{array}{c|ccccccccccc} n \backslash u & 2 & 3 & 4 & 5 & 6 & 7 & 8 & 9 & 10 & 11 & 12\\
\hline
1 & 1 \\
2 & 1 & 0 & 4 \\ 3 & 1 & 0 & 20 & 0 & 21
\\ 4 & 1 & 0 & 88 & 0 & 252 & 0 & 121
\\ 5 & 1 & 0 & 376 & 0 & 2354 & 0 & 2547 & 0 & 728
\end{array}$

    \caption{Some entries $D_{3, u, n}'$ of the $3$-dimensional semisymmetric height triangle. The variable $u$ stands for semisymmetric height.}
    \label{table:3dtriangle2}
\end{table}

In Table~\ref{table:3dtriangle2} for the $3$-dimensional semisymmetric triangle, the columns corresponding to odd values of $u$ are zero. For odd $u$ and $k$, we have $D_{k, u, n}^{\,\prime} = 0$. Indeed, we find that for odd $k$, the semisymmetric height function $g_k(\vec{x}) = \sum_{i=1}^{k}(k+1-2i)x_i$ has only even coefficients. This means, for odd $k$, any intermediate point of a $k$-dimensional balanced ballot path has even weight. Thus, if $k$ is odd, any $k$-dimensional balanced ballot path has an even semisymmetric height

Similarly, we consider the $4$-dimensional semisymmetric height triangle as the new sequence A393594 in the OEIS \cite{oeis}. Using the Python script in the OEIS entry, we compute the first 4 nonzero rows of the $4$-dimensional semisymmetric height triangle, as illustrated in Table~\ref{table:4dtriangle2}. From the table, we see that the $4$-dimensional semisymmetric height triangle is different from the $4$-dimensional balanced-Ballot-Path-Height triangle from Section 5 of \cite{MR5034574} (A387987 in the OEIS \cite{oeis}).
\begin{table}[H]
    \centering
  \[
\begin{array}{c|cccccccccccccc} n \backslash u & 4 & 5 & 6 & 7 & 8 & 9 & 10 & 11 & 12 & 13 & 14 & 15 & 16\\
\hline
1 & 1\\
2 & 1 & 0 & 1 & 8 & 4 \\
3 & 1 & 0 & 3 & 69 & 48 & 30 & 151 & 135 & 25 \\4 & 
1 & 0 & 7 & 533 & 553 & 583 & 4299 & 5051 & 1930 & 4288 & 4819 & 1764 & 196
\end{array}
\]

    \caption{Some entries $D'_{4, u, n}$ of the 4-dimensional semisymmetric height triangle. Parameter $u$ stands for semisymmetric height.}
    \label{table:4dtriangle2}
\end{table}

In Table~\ref{table:3dtriangle2}, the last nonzero entry of each row is bounded from below by the $n$th Catalan number $C_n$. Furthermore, in Table~\ref{table:4dtriangle2}, the last nonzero entries of each row form the sequence of squares of Catalan numbers (A001246 in the OEIS \cite{oeis}). From this, one may conjecture that for general $k$, we can bound the last nonzero entry of each row of the $ k$-dimensional semisymmetric height triangle lower bound using an expression in terms of the multidimensional Catalan numbers (Definition~\ref{def:multidim}) from earlier. We confirm this intuition below.
\begin{proposition}\label{prop:Bound}
    For each $k \geq 2$ and $n \geq 1$, we have \[D'_{k, \lceil \frac{k}{2}\rceil \lfloor \frac{k}{2}\rfloor n, n} \geq {C}_{\lceil \frac{k}{2}\rceil, n}{C}_{\lfloor \frac{k}{2}\rfloor, n}.\]
\end{proposition}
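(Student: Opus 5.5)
We construct an injection from pairs of lower-dimensional balanced ballot paths into $k$-dimensional balanced ballot paths of length $kn$ whose semisymmetric height is exactly $\lceil \frac{k}{2}\rceil\lfloor \frac{k}{2}\rfloor n$. Write $p=\lfloor \frac{k}{2}\rfloor$ and $q=\lceil \frac{k}{2}\rceil$, so $p+q=k$.

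Given a $q$-dimensional balanced ballot path $Q_1$ of length $qn$ and a $p$-dimensional balanced ballot path $Q_2$ of length $pn$, we form the $k$-dimensional path $\Psi(Q_1,Q_2)$ in two phases.
\begin{itemize}
\item \emph{First phase.} Run $Q_1$ in the coordinates $1,\dots,q$: each step $\vec{e}_i$ of $Q_1$ becomes the step $\vec{e}_i$ of $\mathbb{R}^k$.
\item \emph{Second phase.} Run $Q_2$ shifted into the coordinates $q+1,\dots,k$: each step $\vec{e}_j$ of $Q_2$ becomes the step $\vec{e}_{q+j}$.
\end{itemize}

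The steps to verify, in order, are these.

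\emph{(i) $\Psi(Q_1,Q_2)$ is a balanced ballot path.} The balanced property is immediate, since each $\vec{e}_i$ appears exactly $n$ times. For the ballot property, during the first phase the last $p$ coordinates are $0$ and the first $q$ coordinates are weakly decreasing because $Q_1$ is ballot. During the second phase the first $q$ coordinates all equal $n$, so they dominate the remaining coordinates, which are bounded by $n$ and are weakly decreasing because $Q_2$ is ballot.

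\emph{(ii) $\Psi$ is injective.} $Q_1$ and $Q_2$ are recovered by splitting $\Psi(Q_1,Q_2)$ after its first $qn$ steps.

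\emph{(iii) The semisymmetric height is exactly $qpn$.} At the end of the first phase the path reaches $n(\vec{e}_1+\dots+\vec{e}_q)$, which has height $qpn$; this is the computation already done in the proof of Proposition~\ref{prop:maxheight}. Proposition~\ref{prop:maxheight} also says no intermediate point exceeds $qpn$. Hence $g_k(\Psi(Q_1,Q_2))=qpn$, and $\Psi(Q_1,Q_2)$ is counted by $D'_{k,qpn,n}$.

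Since the number of choices of $Q_1$ is $C_{q,n}$ and of $Q_2$ is $C_{p,n}$, injectivity gives the inequality $D'_{k,qpn,n}\ge C_{q,n}C_{p,n}$.

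\emph{Edge case.} When $k=3$ we have $p=1$, and a $1$-dimensional balanced ballot path is the single path $(\vec{e}_1,\dots,\vec{e}_1)$, so $C_{1,n}=1$. The construction still works in this case, so there is no real obstacle here.

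The step that needs the most care is the ballot check at the transition between the two phases, but it is straightforward because all leading coordinates equal $n$ once the first phase ends.
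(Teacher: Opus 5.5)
Your proof is correct and follows essentially the same route as the paper, which counts the paths that first take all $\lceil \frac{k}{2}\rceil n$ up/neutral steps as a $\lceil \frac{k}{2}\rceil$-dimensional ballot path and then all $\lfloor \frac{k}{2}\rfloor n$ down-steps as a $\lfloor \frac{k}{2}\rfloor$-dimensional ballot path. Each such path attains height $\lceil \frac{k}{2}\rceil\lfloor \frac{k}{2}\rfloor n$ at the switch point, and there are exactly $C_{\lceil \frac{k}{2}\rceil,n}C_{\lfloor \frac{k}{2}\rfloor,n}$ of them. Your explicit ballot check at the transition, and your appeal to Proposition~\ref{prop:maxheight} to rule out exceeding that height, spell out points the paper's proof leaves implicit.
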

\begin{proof}
    We count the number of $k$-dimensional balanced ballot paths consisting first of $\left\lceil k/2 \right\rceil n$ semisymmetric up-steps and then consisting of $\left\lfloor \frac{k}{2} \right\rfloor n$ semisymmetric down-steps. We use $\mathcal{S}$ to denote this class of $k$-dimensional balanced ballot paths. All paths in this class $\mathcal{S}$ have semisymmetric height $\sum_{j=1}^{\left\lceil \frac{k}{2} \right\rceil}(k-2j+1)n = \left\lceil \frac{k}{2}\right\rceil \left\lfloor \frac{k}{2} \right\rfloor n$.  Thus, we observe $|\mathcal{S}| \leq D'_{k, \left\lceil \frac{k}{2}\right\rceil \left\lfloor \frac{k}{2}\right\rfloor n, n} $.

    We now count the number of paths in $\mathcal{S}$. First, there are ${C}_{\left\lceil \frac{k}{2}\right\rceil, n}$ paths consisting of only the $\left\lceil k/2 \right\rceil n$ semisymmetric up and neutral steps such that for any intermediate point $x_1 \geq x_2 \geq \dots \geq x_{\left\lceil k/2 \right\rceil}$. Then, there are $C_{\left\lfloor \frac{k}{2}\right\rfloor, n}$ paths of the $\left\lfloor k/2 \right\rfloor n$ consisting of down-steps such that for any intermediate point $x_{\left\lceil \frac{k}{2}\right\rceil+1} \geq x_{\left\lceil \frac{k}{2}\right\rceil+2}  \geq \dots \geq x_{k}$. Thus, multiplying these two multidimensional Catalan numbers yields ${C}_{\left\lceil \frac{k}{2}\right\rceil, n}{C}_{\left\lfloor \frac{k}{2}\right\rfloor, n}  = |\mathcal{S}| \leq D'_{k, \left\lceil \frac{k}{2}\right\rceil \left\lfloor \frac{k}{2}\right\rfloor n, n}$.
\end{proof}

For even $k$, this lower bound on $D'_{k, \left\lceil \frac{k}{2}\right\rceil \left\lfloor \frac{k}{2}\right\rfloor n, n}$ is tight.
\begin{proposition}\label{prop:DprimeCk2}
    For any positive integer $n$ and even $k \geq 2$, we have $$D'_{k, \left(\frac{k^2}{4}\right) n, n} = ({C}_{k/2, n})^2.$$
\end{proposition}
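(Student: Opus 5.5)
Proposition~\ref{prop:Bound} already gives the lower bound $D'_{k,(k^2/4)n,n} \geq (C_{k/2,n})^2$, so we only need the reverse inequality. Write $k=2r$. The plan is to show that every $k$-dimensional balanced ballot path of length $kn$ attaining semisymmetric height $r^2 n$ lies in the class $\mathcal{S}$ from the proof of Proposition~\ref{prop:Bound}. That class consists of paths made of all $rn$ semisymmetric up-steps (directions $\vec{e}_1,\dots,\vec{e}_r$) followed by all $rn$ semisymmetric down-steps. Since $k$ is even there are no neutral steps, so $\mathcal{S}$ is exactly the set of such concatenations. We already counted $|\mathcal{S}| = (C_{r,n})^2$ in that proof, so this suffices.

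The key step is to characterize the points of semisymmetric height $r^2n$. Take any intermediate point $\vec{x}$ of a path of length $kn$, so $0 \le x_i \le n$ for all $i$. Split
\[
g_k(\vec{x}) = \sum_{i=1}^{r}(k+1-2i)x_i - \sum_{i=r+1}^{k}(2i-k-1)x_i .
\]
For $i\le r$ the coefficients $k+1-2i$ are positive, and for $i>r$ the coefficients $2i-k-1$ are positive. Hence
\[
g_k(\vec{x}) \le n\sum_{i=1}^{r}(k+1-2i) = r^2 n,
\]
which recovers Proposition~\ref{prop:maxheight}. Equality holds if and only if $x_1=\dots=x_r=n$ and $x_{r+1}=\dots=x_k=0$. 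Evenness of $k$ matters here: every coefficient is nonzero, so there is no middle coordinate left unconstrained.

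Now suppose $g_k(P)=r^2n$. Then some intermediate point equals $\vec{x}^*=(n,\dots,n,0,\dots,0)$ with $r$ copies of $n$. This point lies $rn$ steps into the path. Reaching it requires all $n$ steps in each direction $\vec{e}_1,\dots,\vec{e}_r$ and no steps in directions $\vec{e}_{r+1},\dots,\vec{e}_k$. So the first $rn$ steps are exactly the up-steps and the remaining $rn$ steps are exactly the down-steps, which means $P\in\mathcal{S}$.

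Combined with the inclusion of $\mathcal{S}$ among height-$r^2n$ paths from Proposition~\ref{prop:Bound}, this gives equality: $D'_{k,(k^2/4)n,n}=|\mathcal{S}|=(C_{k/2,n})^2$. I expect the only delicate point to be re-checking the count of $\mathcal{S}$. The ballot condition for the whole path splits into two independent conditions. During the first phase the coordinates $x_{r+1},\dots,x_k$ are zero, so we need $x_1\ge\dots\ge x_r$. During the second phase $x_1=\dots=x_r=n$, so we need $x_{r+1}\ge\dots\ge x_k$. Each condition contributes a factor $C_{r,n}$.
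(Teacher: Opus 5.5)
Your proof is correct, but it reaches the key characterization by a different route from the paper.

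The paper argues along the path. It assumes some semisymmetric down-step occurs among the first $\frac{k}{2}n$ steps and takes an index $i_{max}$ where the maximum height is attained. It notes that this index lies at or before the last semisymmetric up-step, and then splits into two cases:
\begin{itemize}
\item strictly before the last up-step, in which case some $x_i<n$ with $i\le k/2$;
\item exactly at the last up-step, in which case some down-coordinate is already positive.
\end{itemize}
Each case gives a strict inequality.

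You instead argue pointwise, writing $k=2r$. The function $g_k$ is linear with all coefficients nonzero when $k$ is even. So on the box $[0,n]^k$ its maximum $r^2n$ is attained only at $(n,\dots,n,0,\dots,0)$. A path therefore has height $r^2n$ exactly when it visits that vertex, and by the coordinate sum this must be its $rn$th intermediate point. This has several advantages:
\begin{itemize}
\item it removes the $i_{max}$ case analysis;
\item it gives both directions of the characterization at once and re-derives the upper bound of Proposition~\ref{prop:maxheight};
\item it shows exactly where evenness is used.
\end{itemize}

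Your argument also carries over to odd $k=2r+1$. There the middle coefficient vanishes, so the maximizers are the points with $x_1=\dots=x_r=n$ and $x_{r+2}=\dots=x_k=0$, with $x_{r+1}$ free. For $k=3$ this is exactly the criterion ``every $\vec{e}_1$ precedes every $\vec{e}_3$.'' The paper proves that criterion separately, again by an $i_{max}$ argument, in Proposition~\ref{prop:NotTightExample}.

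The counting step is the same in both proofs. The prefix and suffix each satisfy an independent $\frac{k}{2}$-dimensional ballot condition, which gives $(C_{k/2,n})^2$.
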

\begin{proof}
    We first claim that a $k$-dimensional balanced ballot path  $P$ reaches semisymmetric height $\left(\frac{k}{2}\right)^2 n$ if and only if the first $\frac{k}{2} \cdot n$ steps are semisymmetric up-steps of (i.e., are steps of form $\vec{e}_i \in \{ \vec{e}_1, \vec{e}_2, \ldots, \vec{e}_{\frac{k}{2}} \}$) and the next $\frac{k}{2}n$ are semisymmetric down-steps (i.e., are steps of form $\vec{e}_i \in \{\vec{e}_{k/2+1}, \vec{e}_{k/2+2}, \dots, \vec{e}_k$) of $\vec{e}_j\}$).
    
    If the first $\frac{k}{2} \cdot n$ steps of $P$ are semisymmetric up-steps, the maximum height attained by an intermediate point of $P$ is $\sum_{i=1}^{\frac{k}{2}}(k+1-2i)n = \frac{k^2}{4}n$, which is the height of the $\frac{k}{2}n$th intermediate point. Hence, if the first $\frac{k}{2} \cdot n$ steps of $P$ are semisymmetric up-steps, the semisymmetric height of $P$ is  $\frac{k^2}{4}n$. 
    
    On the other hand, suppose there exists a semisymmetric down-step $\vec{s}_{i_{down}}$ within the first $\frac{k}{2}n$ steps. Let  $i_{max} \in \{0, 1, 2, \dots, kn\}$ denote the index such that the semisymmetric height $i_{max}$th intermediate point is the semisymmetric height of $P$, i.e., is the highest possible one attained by an intermediate point of $P$. Let $\vec{x} = (x_1, x_2, \dots, x_k)$ denote the $i_{max}$th intermediate point. In this setting, either the $i_{max}$th step is before the last semisymmetric up-step, or it is the last semisymmetric up-step. (If the $\vec{s}_{i_{max}}$th step is after the last semisymmetric up-step, then $\vec{s}_{i_{max}}$ is a down-step and hence the $(i_{max}-1)$th intermediate point has a higher semisymmetric height, contradicting the definition of $i_{max}$.) If the $i_{max}$th step is before the last semisymmetric up-step, we find that $g_k(\vec{v}_{i_{max}}) = \sum_{i=1}^{k}(k+1-2i)x_i \leq \sum_{i=1}^{\frac{k}{2}}(k+1-2i)x_i < \sum_{i=1}^{\frac{k}{2}}(k+1-2i) n = \frac{k^2n}{4}$ with the strict inequality following from $i_{max}$ being before the last semisymmetric up-step.  Now, suppose that the $i_{max}$th step is the last semisymmetric up-step. Then we find 
    $g_k(\vec{v}_{i_{max}})= \sum_{i=1}^{k}(k+1-2i)x_i < \sum_{i=1}^{\frac{k}{2}}(k+1-2i)x_i = \sum_{i=1}^{\frac{k}{2}}(k+1-2i) n = \frac{k^2n}{4}$ with the first strict inequality being due to how there is a semisymmetric down-step before the last semisymmetric up-step. Therefore the semisymmetric height of $P$, $g_k(P)$, is less than $\frac{k^2n}{4}$.

    Hence, a $k$-dimensional balanced ballot path $P$ reaches semisymmetric height $\frac{k^2}{4}n$ if and only if the first $\frac{k}{2}n$ steps are semisymmetric up-steps and the rest are down-steps. This fact implies $D'_{k, \frac{k^2}{4}n, n}$ is the number of such paths.

    We now compute $D'_{k, \frac{k^2}{4}n, n}$. The number of possible length $\frac{k}{2}n$ prefixes of such $k$-dimensional balanced ballot paths is the $k/2$-dimensional Catalan number $C_{k/2, n}$, which is the number of $\frac{k}{2}$-dimensional of $\frac{kn}{2}$ steps, because of the ballot property (see Definition~\ref{defn:k-dim-path}) of balanced ballot paths. For a similar argument on the ballot property, the number of length $\frac{k}{2}n$ suffixes of such $k$-dimensional balanced ballot paths is the $k/2$-dimensional Catalan number $C_{k/2, n}$. Combining these two, we conclude with the equality $D'_{k,\lceil \frac{k}{2} \rceil \lfloor \frac{k}{2} \rfloor,n} = (C_{k/2,n})^2$.
\end{proof}
\begin{example}
    As illustrated in Table~\ref{table:4dtriangle2}, Proposition~\ref{prop:DprimeCk2} implies identity $D'_{4, 4n, n} = \widehat{C}_{2, n}^2 = C_n^2$.
\end{example}

On the other hand, the bound on $D'_{k, \left\lceil k/2\right\rceil \left\lfloor k/2 \right\rfloor n, n}$ from Proposition~\ref{prop:Bound} does not seem to be tight for odd $k$. We verify this for $k=3$ by proving the exact formula for $D'_{3, 2n, n}$. In the Proposition below, we find that $ D'_{3, 2n, n}$ is equal to the $n$th term of the sequence A274969 of the OEIS \cite{oeis}. 

\begin{proposition}\label{prop:NotTightExample}
    For each $n \geq 1$, we have $D'_{3, 2n, n} = \binom{3n}{n} - 2\binom{3n}{n-1} + \binom{3n}{n-2}$.
\end{proposition}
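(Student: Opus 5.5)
The plan rests on one observation: for $k=3$ we have $g_3(\vec{x}) = 2(x_1 - x_3)$. Every intermediate point of a balanced ballot path of length $3n$ satisfies $x_1 \le n$ and $x_3 \ge 0$, so $g_3(\vec{x}) \le 2n$. Equality holds exactly when $x_1 = n$ and $x_3 = 0$. Hence a path has semisymmetric height exactly $2n$ if and only if every $\vec{e}_1$ step precedes every $\vec{e}_3$ step. (This agrees with Proposition~\ref{prop:maxheight}, which says $2n$ is the maximum.) So $D'_{3,2n,n}$ counts $3$-dimensional balanced ballot words in the letters $1,2,3$ in which all $1$'s occur before all $3$'s.

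Next I will decompose such a word canonically at its last $\vec{e}_1$ step.

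The prefix, ending with that step, is a word in $\{1,2\}$ with $n$ ones and some number $j$ of twos. It ends in a $1$, and every prefix has at least as many $1$'s as $2$'s. Removing the final $1$ leaves a $2$-dimensional ballot word with $n-1$ ones and $j$ twos, where $0 \le j \le n-1$. By the ballot theorem there are $\frac{n-j}{n}\binom{n-1+j}{j}$ of these.

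The suffix starts at $(n,j,0)$ and uses the remaining $n-j$ steps $\vec{e}_2$ and $n$ steps $\vec{e}_3$. The constraint $x_1 = n \ge x_2$ is automatic, so the only condition is $x_2 \ge x_3$. This is a Dyck-type path from height $j$ down to height $0$ that never goes below $0$, with $n-j$ up-steps and $n$ down-steps. The reflection principle counts these as $\binom{2n-j}{n} - \binom{2n-j}{n+1}$.

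This gives
\[
D'_{3,2n,n} = \sum_{j=0}^{n-1} \frac{n-j}{n}\binom{n-1+j}{j}\left(\binom{2n-j}{n} - \binom{2n-j}{n+1}\right).
\]
Before going further I will check it against Table~\ref{table:3dtriangle2}: it should give $1, 4, 21, 121, 728$ for $n = 1, \dots, 5$.

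The main obstacle is evaluating this sum in closed form. My first attempt will use generating functions. The prefix factor is the coefficient extraction $[t^{j}]$ from powers of the Catalan series $c(t)$. The suffix factor is the number of paths from height $j$ to $0$, whose generating function is $c(t)^{j+1}$ in the appropriate variable. Combining these should turn the sum into a coefficient extraction such as $[t^n]\,(1-t)^2(1+t)^{3n}$, or an equivalent Lagrange-inversion expression. That form yields $\binom{3n}{n} - 2\binom{3n}{n-1} + \binom{3n}{n-2}$ directly.

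If the generating-function route becomes messy, I will use a reflection argument instead. The second-difference shape of the target suggests the following: write the count of words with $n$ ones, $n$ twos and $n$ threes, with all $1$'s before all $3$'s, as $\binom{3n}{n}$-type totals. The ballot conditions $x_1 \ge x_2$ and $x_2 \ge x_3$ are then imposed by one reflection per wall, and these account for the $-2\binom{3n}{n-1}$ and $+\binom{3n}{n-2}$ terms. As a safeguard, I will verify the identity for small $n$ and then confirm the general case with Zeilberger's algorithm, or by checking that both sides satisfy the same first-order recurrence in $n$.
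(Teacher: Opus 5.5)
Everything up to your displayed sum is correct, and your characterization is cleaner than the paper's. Since $g_3=2(x_1-x_3)$, $x_1\le n$ and $x_3\ge 0$, height $2n$ is attained exactly when some intermediate point has $x_1=n$ and $x_3=0$, i.e.\ when every $\vec{e}_1$ precedes every $\vec{e}_3$. The paper reaches the same criterion through a longer argument about the index $i_{max}$. Your decomposition at the last $\vec{e}_1$, the ballot count $\frac{n-j}{n}\binom{n-1+j}{j}$ and the reflection count $\binom{2n-j}{n}-\binom{2n-j}{n+1}$ are all right, and the sum gives $1,4,21,121,728$, matching the table. The gap is the final step, which carries all the content of the formula: you never evaluate the sum, you only list routes that ``should'' work. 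The generating-function sketch does not close as written. The prefix factor is $[t^j]c(t)^{n-j}$ and the suffix factor is $[t^{n-j}]c(t)^{j+1}$; both exponents move with $j$, so the sum is not a coefficient of a product of two fixed series. The ``one reflection per wall'' remark is likewise not attached to any walk in which $x_1\ge x_2$ and $x_2\ge x_3$ are actually two walls. As it stands, you have proved only that $D'_{3,2n,n}$ equals your sum.

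Either route can be finished in a few lines.

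\emph{Algebraic route.} Write the prefix factor as a reflection difference too, $\frac{n-j}{n}\binom{n-1+j}{j}=\binom{n-1+j}{j}-\binom{n-1+j}{j-1}$, and the suffix factor as $\binom{2n-j}{n-j}-\binom{2n-j}{n-j-1}$. Extend the sum to $j=n$, where the prefix factor vanishes, and expand into four sums. Apply $\sum_j\binom{a+j}{j}\binom{b+m-j}{m-j}=\binom{a+b+m+1}{m}$ (the coefficient of $x^m$ in $(1-x)^{-a-1}(1-x)^{-b-1}$) to each, after an index shift where needed. This yields $\binom{3n}{n}-\binom{3n}{n-1}-\binom{3n}{n-1}+\binom{3n}{n-2}$.

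\emph{Combinatorial route}, which is the precise form of your reflection heuristic. Since the $1$'s and $3$'s occur in a forced order, merge them into one letter $X$. The word becomes a walk of $2n$ up-steps ($X$) and $n$ down-steps ($2$) from $0$ to $n$. With $h=\#X-\#2$, the two ballot conditions become exactly $0\le h\le n$. There are $\binom{3n}{n}$ unrestricted such walks. The strip reflection principle (walls at $h=-1$ and $h=n+1$) sends the endpoint $n$ to the images $-n-2$, $n+2$, $-n-4$. These contribute $-\binom{3n}{n-1}$, $-\binom{3n}{n-1}$, $+\binom{3n}{n-2}$, and every other image lies outside $[-3n,3n]$.

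The paper's own proof leaves this same step unargued. It relabels the path as a quarter-plane walk with steps $A=(1,0)$, $B=(-1,1)$, $C=(0,-1)$ whose $A$'s precede its $C$'s, and simply asserts that this set has the stated size. Either completion above would therefore make your argument more self-contained than the paper's.
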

\begin{proof}
     Observe that the expression $\binom{3n}{n} - 2\binom{3n}{n-1} + \binom{3n}{n-2}$ is the number of paths from $(0, 0)$ to $(0, 0)$ within $\mathbb{Z}_{\geq 0}^2$ consisting of $3n$ steps from $\{A=(1, 0), B= (-1, 1), C=(0, -1)\}$. Call the set of such paths $Q_{n}$. Thus, it suffices to show that the set of $3$-dimensional balanced ballot paths counted by $D'_{3, 2n, n}$, i.e., the $3n$ steps with exact semisymmetric height  $2n$, is equinumerous to $Q_n$. We prove this fact via a bijective proof.

    Consider the following map $f$ from the set  of $3$-dimensional balanced ballot paths counted by $D'_{3, 2n, n}$ to the set $Q_n$: for balanced ballot path $P$ in the domain, we construct path $q \in Q_n$ by letting the $i$th step of $q$ be $A$ if the $i$th step of $P$ is $\vec{e}_1$, $B$ if the $i$th step of $P$ is $\vec{e}_2$, and $C$ if the $i$th step of $P$ is $\vec{e}_3$. 
    
    We confirm that, for any 3-dimensional balanced ballot path $P$ of length $3n$ with semisymmetric height $2n$, the path  $f(P)$ is in $Q_n$. First, the intermediate points of $Q_n$ are all in $\mathbb{Z}_{\geq 0}^2$. Indeed, ballot property of $P$ guarantees that, in any prefix of the sequence $f(P)$, the number of steps equal to $A = (1, 0)$ is at least that of steps equal to $B = (-1, 1)$, and that the number of steps equal to $B = (-1, 1)$ is at least that of steps equal to $C = (0, 1)$.

    Thus, to show $f(P)$ is in $Q_n$, it suffices to show that the last step equal to $A$ in path $f(P)$ is before the first step equal to $C$. We prove an equivalent statement: the first $\vec{e}_3$ step (i.e., the first semisymmetric down-step) in $P$ must be after the last $\vec{e}_1$ step (i.e., last semisymmetric down-step). Suppose, for the sake of contradiction, that this statement is not true. Let $i_{max} \in \{0, 1, \dots, kn\}$ be the smallest possible integer so that the semisymmetric height of $P$, i.e., the maximum possible semisymmetric height attainable by an intermediate point of $P$, is that of the $i_{max}$th intermediate point of $P$. Let $\vec{v}_{i_{max}}=(x_1, x_2, x_3)$ be the $i_{max}$th intermediate point of $P$. We know that $i_{max}$th step must not be a semisymmetric down-step; otherwise, the $(i_{max}-1)$th intermediate point of $P$ must have a higher semisymmetric height than the $i_{max}$th. We also know the $i_{max}$th step of $P$ must be an up-step; otherwise the $(i_{max}-1)$th intermediate point would have a semisymmetric height equal to that of $P$, contradicting the definition of $i_{max}$. We then obtain that $g_3(\vec{v}_{i_{max}})=2x_1-2x_3 < 2n$. If the $i_{max}$th step is not the last semisymmetric up-step, then we find that $g_3(\vec{v}_{i_{max}})=2x_1-2x_3 \leq 2x_1 < 2n$. Otherwise, if the $i_{max}$th step is the last semisymmetric up-step, we see $g_3(\vec{v}_{i_{max}})=2x_1-2x_3 < 2x_1 = 2n$, because of the semisymmetric down-step that came before the last semisymmetric up-step. Therefore, $g_k(\vec{v}_{i_{max}}) < 2n$ is the semisymmetric height of $P$, and a contradiction is reached. 

    In sum, we have that $f(P)$ is in $\mathbb{Z}_{\geq 0}^2$, and the last step equal to $A$ in $f(P)$ is before the first equal to $C$.  Hence, the path $f(P)$ is in $Q_n$.
    
    We now show that the function $f$ is a bijection because it has an inverse function $g$. Consider the function $g$, defined as follows: for $q \in Q_n$, if the $i$th step of $q$ is $A$ then the $i$th step of $g(q)$ is $\vec{e}_1$, if the $i$th step of $q$ is $B$ then the $i$th step of $g(q)$ is $\vec{e}_2$, and if the $i$th step of $q$ is $C$ then the $i$th step of $g(q)$ is $\vec{e}_3$. 
    
    We observe that the range of $g$ is a subset of $Q_n$. Firstly, there are $n$ steps in $g(q)$ equal to $\vec{e}_1, \vec{e}_2$, and $\vec{e}_3$ since there are $n$ steps equal to $A$, $B$, and $C$ in $q$. Furthermore, path $g(q)$ exhibits the ballot property. Otherwise, for some prefix of $q$, there are 1) more instances of $B$  than $A$ or 2) more of $C$ than $A$, and that would imply that $q$ is not in $\mathbb{Z}_{\geq 0}^2$, a contradiction of how $q \in Q_n$.
    
    Furthermore, the balanced ballot path $g(q)$ has a semisymmetric height of exactly $2n$. Indeed, the last step equal to $A$ in path $q$ is before the first step equal to $C$, which means the last instance of $\vec{e}_1$ in $g(q)$ is before the first of $\vec{e}_3$. Consequently, the semisymmetric height of the intermediate point resulting from the last $\vec{e}_1$ step is $2n$.

    Finally, function $g$ is the inverse of $f$. Indeed, for any $q \in Q_n$ and  3-dimensional balanced ballot path $P$ of $3n$ steps with semisymmetric height $2n$, we obtain $f(g(q)) = q$ and $g(f(P))= P$ because $f$ and $g$ relabel steps.

    Because $f$ is a bijection from the set of $3$-dimensional balanced ballot paths of length $3n$ with semisymmetric height $2n$ to $Q_n$, we find that $D_{3, 2n, n}' = |Q_n| = \binom{3n}{n} - 2\binom{3n}{n-1}+ \binom{3n}{n-2}$.
\end{proof}

Now, observe that Proposition~\ref{prop:NotTightExample} implies the inequality $$D_{3, 2n, n}' = \binom{3n}{n} - 2\binom{3n}{n-1}  + \binom{3n}{n-2} > C_{ \left\lceil \frac{3}{2} \right\rceil, n} \cdot C_{ \left\lfloor \frac{3}{2} \right\rfloor, n} = C_{2, n}C_{1, n} = C_n.$$  Thus, we conclude the bound from Proposition~\ref{prop:Bound} for $D'_{k, \left\lceil \frac{k}{2} \right\rceil \left\lfloor \frac{k}{2} \right\rfloor n, n}$ is not tight for $k=3$.

\section{The \texorpdfstring{$k$}{k}-dimensional Semisymmetric Narayana Number}\label{sec:Narayana}
Since balanced ballot paths admit semisymmetric up-steps and down-steps (Definition~\ref{defn:up-step-down-step}), it is natural to refine their enumeration accordingly. The classical Narayana number $N(n,\alpha)$ (A001263 in the OEIS \cite{oeis}) counts Dyck paths of length $2n$ with exactly $\alpha$ peaks. Sulanke’s analog $N(k,\alpha,n)$ \cite{multidimnarayana} counts $k$-dimensional balanced ballot paths of length $kn$ with exactly $\alpha$ occurrences of a step $\vec{e}_i$ immediately followed by a step $\vec{e} _ {j} $ for some $i>j$. In contrast, \cite{MR5034574} we defined $N'{k,\alpha,n}$ to count those with exactly $\alpha$ steps of the form $\vec{e}_1$ immediately followed by a step other than $\vec{e}_1$. This motivates the introduction of the $k$-dimensional semisymmetric Narayana numbers, defined in terms of peaks arising from semisymmetric up- and down-steps.

After introducing the $k$-dimensional semisymmetric Narayana numbers, we compute numerical examples in the $3$- and $4$-dimensional cases. These examples show that the semisymmetric Narayana numbers differ from the earlier analogs in \cite{multidimnarayana, MR5034574}. We also prove that the $k$-dimensional semisymmetric Narayana numbers and the $k$-dimensional semisymmetric height triangle from Section~\ref{sec:heighttriangles} share a subsequence of terms.

To construct our new $k$-dimensional analog of the Narayana numbers, we begin by providing an alternative definition of peaks.
\begin{definition}
Let $P$ be a $k$-dimensional balanced ballot path. In this paper, a \textbf{semisymmetric peak} in $P$ is any instance of a semisymmetric up-step, i.e. any step in $\{\vec{e}_1, \vec{e}_2, \dots, \vec{e}_{\left\lfloor \frac{k}{2} \right\rfloor}\}$, followed immediately by a semisymmetric down-step, i.e. any step in $\{\vec{e}_{\left\lceil \frac{k
}{2} \right\rceil+1}, \vec{e}_{\left\lceil \frac{k
}{2} \right\rceil+2}, \dots, \vec{e}_{k}\}$.
\end{definition}

We use the term semisymmetric peak because our notions of up-steps and down-steps originate from our notion of semisymmetric height (see Section~\ref{sec:bbp}). We also choose this term to avoid confusion with the $k$-dimensional peaks from our previous paper \cite{MR5034574}. There, a peak is a step equal to $\vec{e}_1$ immediately followed by one that is not $\vec{e}_1$.

\begin{example}
Consider the $4$-dimensional balanced ballot path $(\vec{e}_1, \vec{e}_1, \vec{e}_2, \vec{e}_3, \vec{e}_4, \vec{e}_2, \vec{e}_3, \vec{e}_4)$. This path has exactly two semisymmetric peaks: the two instances of $\vec{e}_2$ that are immediately followed by a step equal to $\vec{e}_3$.
\end{example}

Building on our new notion of peaks, we now introduce the $k$-dimensional semisymmetric Narayana number, an analog of the Narayana numbers.

\begin{definition}
Denote by $N_{k,\alpha,n}''$ the number of $k$-dimensional balanced ballot paths with $kn$ steps and $\alpha$ semisymmetric peaks. We call $N_{k, \alpha, n}''$ the \textbf{$k$-dimensional semisymmetric Narayana number}. We call the table of nonzero values of $N_{k, \alpha, n}''$ the \textbf{$k$-dimensional semisymmetric Narayana triangle}.
\end{definition}
\begin{remark}
We use the notation $N_{k, \alpha, n}''$ to avoid confusion with the $k$-dimensional Narayana numbers from \cite{MR5034574}, which were denoted by $N_{k, \alpha ,n}'$.\end{remark}

In particular, we obtain $N_{2,\alpha,n}'' = N(n, \alpha)$. This result follows from the bijection between $2$-dimensional balanced ballot paths and Dyck paths (see Section~\ref{sec:bbp}). The $i$th step of a $2$-dimensional balanced ballot path $P$ is a semisymmetric peak if and only if it equals $\vec{e}_1$ and is immediately followed by $\vec{e}_2$. This is equivalent to the $i$th step of the corresponding Dyck path being a peak, which means a step $(1, 1)$ immediately followed by a step $(1, -1)$. Therefore, the number of Dyck paths of length $2n$ with $\alpha$ peaks equals the number of $2$-dimensional balanced ballot paths of length $2n$ with $\alpha$ semisymmetric peaks.

Consider the 3-dimensional semisymmetric Narayana triangle, which is the new sequence A393747 in the OEIS \cite{oeis}. Using the Python script in the OEIS entry, we compute the first 5 nonzero rows of the 3-dimensional semisymmetric Narayana triangle, shown in Table~\ref{tab:3dnarayana2}. The table shows that the 3-dimensional semisymmetric Narayana numbers $N_{k, \alpha,n}''$ differ from both the values of $N_{k, \alpha, n}'$ presented in \cite{MR5034574}, and $N(3, \alpha, n)$ reported by Sulanke \cite{multidimnarayana} (A087647 from the OEIS \cite{oeis}).

\begin{table}[H]
\[
\begin{array}{c|cccccc}
n \backslash \alpha & 0 & 1 & 2 & 3 & 4  & 5\\
\hline
0 & 1 \\
1 & 1 \\
2& 4 & 1 & \\
3& 25& 16 & 1\\
4& 196 &  221 & 49 & 1\\
5& 1764 & 2976 & 1161 & 104 & 1\\
6& 17424 & 40105 & 24972 & 4786 & 228 & 1\\
\end{array}
\]
\captionof{table}{Values of $N_{3, \alpha ,n}''$, the $3$-dimensional semisymmetric Narayana triangle. Parameter $\alpha$ stands for the number of semisymmetric peaks.}
\label{tab:3dnarayana2}
\end{table}

\medskip
Similarly, consider the 4-dimensional semisymmetric Narayana triangle, which is the new sequence A393571 in the OEIS \cite{oeis}. Using the Python script in the OEIS entry, we compute the first 5 nonzero rows of the 4-dimensional semisymmetric Narayana triangle. These appear in Table~\ref{tab:4dnarayana2}. The table shows that the $4$-dimensional semisymmetric Narayana triangle differs from the 4-dimensional Narayana triangle in \cite{MR5034574} (A387936 in the OEIS  \cite{oeis}).
\begin{table}[H]
\[
\begin{array}{c|cccccccccc}
n \backslash \alpha & 0 & 1 & 2 & 3 & 4 & 5 & 6 & 7 & 8 & 9 \\
\hline
0 & 1\\
1 & 0 & 1 & \\
2& 0 & 4 & 9 & 1\\
3 & 0 & 25 & 175 & 256 & 45 & 1\\
4& 0 & 196 & 2828 & 9285 & 9038 & 2514 & 162 & 1\\
5 & 0 & 1764 & 43508 & 274138 & 613545 & 533694 & 176091 & 19541 & 522 & 1\\
\end{array}
\]
\captionof{table}{Values of $N_{4, \alpha ,n}''$, the $4$-dimensional semisymmetric Narayana triangle. Parameter $\alpha$ stands for the number of semisymmetric peaks.}
\label{tab:4dnarayana2}
\end{table}

For even values of $k$, we have $N_{k, n, 0}'' = 0$ for all $n \geq 1$. This identity holds because all nonempty $k$-dimensional balanced ballot paths must contain only semisymmetric up-steps and down-steps (see Definition~\ref{defn:up-step-down-step}).

Additionally, we show below that the subsequence $N_{k, 1, n}''$ of the $k$-dimensional semisymmetric Narayana triangle corresponds to the sequence $D_{k, \frac{k^2n}{4}, n}'$. This sequence represents the rightmost entries in the $k$-dimensional semisymmetric height triangle.

\begin{theorem}\label{thm:RelatingTheTriangles}
For even $k$, we have $$N_{k,1,n}'' = ({C}_{\left(\frac{k}{2}\right),n})^2 = D_{k, \left(\frac{k}{2}\right)^2 n, n}'.$$
\end{theorem}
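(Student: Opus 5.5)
The second equality is exactly Proposition~\ref{prop:DprimeCk2}, since $\left(\frac{k}{2}\right)^2 n = \frac{k^2}{4}n$. So it remains to show $N''_{k,1,n} = (C_{k/2,n})^2$. My plan is to show that, for even $k$ and $n \geq 1$, a $k$-dimensional balanced ballot path has exactly one semisymmetric peak if and only if its first $\frac{k}{2}n$ steps are all semisymmetric up-steps and its last $\frac{k}{2}n$ steps are all semisymmetric down-steps. The counting step from the proof of Proposition~\ref{prop:DprimeCk2} then gives $(C_{k/2,n})^2$ such paths.

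First, I use that for even $k$ there are no neutral steps, so every step lies in $\{\vec{e}_1,\dots,\vec{e}_{k/2}\}$ (up) or $\{\vec{e}_{k/2+1},\dots,\vec{e}_k\}$ (down). Record $P$ as a word in $U$ and $D$. A semisymmetric peak is precisely an occurrence of the factor $UD$. The number of $UD$ factors equals the number of maximal runs of $U$'s that are followed by some $D$.

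Next I show $P$ starts with $U$ and ends with $D$. By the ballot property the first step is $\vec{e}_1$, which is an up-step since $k\ge 2$. The last step ends at $(n,\dots,n)$ from a point satisfying the ballot property, so it must be $\vec{e}_k$, which is a down-step. Hence every maximal run of $U$'s is followed by a $D$, and the number of peaks equals the number of maximal $U$-runs, which is at least $1$.

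The number of $U$-runs is exactly $1$ if and only if the word is $U^{kn/2}D^{kn/2}$. Conversely, any such path has exactly one peak. Therefore $N''_{k,1,n}$ counts the paths in which all $\frac{k}{2}n$ up-steps precede all down-steps.

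For the count, the ballot property decouples: the coordinates $x_1,\dots,x_{k/2}$ change only during the prefix, and $x_{k/2+1},\dots,x_k$ change only during the suffix. During the prefix $x_{k/2}\ge 0 = x_{k/2+1}$. During the suffix $x_{k/2}=n\ge x_{k/2+1}$. So the cross condition $x_{k/2}\ge x_{k/2+1}$ is automatic. The prefix is then a $\frac{k}{2}$-dimensional balanced ballot path. The suffix, read in coordinates $x_{k/2+1},\dots,x_k$, is also a $\frac{k}{2}$-dimensional balanced ballot path. This gives $(C_{k/2,n})^2$.

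The main point needing care is the endpoint argument showing the word begins with $U$ and ends with $D$. Without it, a trailing $U$-run would not produce a peak, and the correspondence between peaks and $U$-runs would fail. Both facts follow directly from the ballot property at the second and penultimate intermediate points, so I expect no real obstacle there.
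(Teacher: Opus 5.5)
Your proof is correct and follows the same route as the paper. Both arguments characterize the one-peak paths as those whose first $\frac{k}{2}n$ steps are semisymmetric up-steps and whose last $\frac{k}{2}n$ steps are down-steps, count them as $(C_{k/2,n})^2$, and cite Proposition~\ref{prop:DprimeCk2} for the second equality. Two points in your write-up supply justification the paper only asserts: the observation that every path begins with $\vec{e}_1$ and ends with $\vec{e}_k$ (so peaks correspond exactly to maximal up-runs), and the explicit check that the condition $x_{k/2}\ge x_{k/2+1}$ holds automatically. Your restriction to $n\ge 1$ is also needed, since the identity fails at $n=0$.
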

\begin{proof}
We first prove that $N_{k,1,n}'' = ({C}_{\left(\frac{k}{2}\right),n})^2$. If the path $P$ has one semisymmetric peak, then there is only one pair of consecutive steps $\{\vec{e}_i,\vec{e}_j\}$ where $i \in \{1, 2, \dots, \frac{k}{2}\}$ and $j \in \{\frac{k}{2}+1,\frac{k}{2}+2, \dots, k\}$. This occurs if and only if both the first $\frac{k}{2} \cdot n$ steps are semisymmetric up-steps of $\vec{e}i$ (with $\vec{e}_i \in \{ \vec{e}_1, \vec{e}_2, \ldots, \vec{e}_{k/2} \}$) and the next $\frac{k}{2}n$ are semisymmetric down-steps of $\vec{e}j$. There are no semisymmetric neutral steps since $k$ is even. In every such balanced ballot path $P$, the semisymmetric up-steps admit $C_{k/2,n}$ distinct orderings, and likewise the semisymmetric down-steps admit $C_{k/2,n}$ distinct orderings. Therefore, there are $N_{k,1,n}'' = (C_{k/2,n})^2$ $k$-dimensional balanced ballot paths of length $kn$ that have only one semisymmetric peak.

The equality $({C}_{\left(\frac{k}{2}\right),n})^2 = D_{k, \left(\frac{k}{2}\right)^2 n, n}'$ follows from Proposition~\ref{prop:DprimeCk2}, completing our proof.
\end{proof}
\section{A Future Direction via Standard Young Tableaux}\label{sec:FurtherDir}
In this paper, using a new notion of height of balanced ballot paths, we defined and studied $k$-dimensional SSWCNs, a new weighted analog of the multidimensional Catalan numbers, as well as the closely related $k$-dimensional $u$-bounded SSWCNs. However, this study of multidimensional analogs of weighted Catalan numbers was motivated and defined in terms of balanced ballot paths rather than by standard Young tableaux. The latter are another important class of objects counted by the multidimensional Catalan numbers and are an extensively studied topic in their own right (see Chapter 7 of Stanley \cite{MR4621625} for a thorough exposition). Thus, we propose the following open problem.

\begin{openproblem}\label{op:probl1}
Provide a natural $k$-dimensional generalization of the weighted Catalan numbers that is in terms of standard Young tableaux and is equivalent to neither the $k$-dimensional weighted Catalan numbers from \cite{MR5034574} nor the  $k$-dimensional SSWCNs and their $u$-bounded variants (Definitions~\ref{defn:kdimweighted} and~\ref{defn:k-dims-boundCat}).
\end{openproblem}

\subsection{Standard Young Tableaux (SYTs)}\label{sec:introsyt}
\subsubsection{Definitions}
Before we discuss an approach to the open problem, we first formally introduce the standard Young tableau. A \textbf{partition} $\lambda$ of $n$, often denoted by $\lambda \vdash n $, is a nonincreasing sequence of nonnegative integers whose sum is $n$. A \textbf{Young diagram} of $\lambda$ is a graphical representation of the partition where the $i$th row contains $\lambda_i$ boxes. A \textbf{standard Young tableau (SYT)} $T$ of shape $\lambda \vdash n$ is a Young diagram of $\lambda$ where each box contains an integer from $1,2, \dots, n$, each appearing once, such that both rows and columns are strictly increasing. An \textbf{SYT of shape $k \times n$} is an SYT of shape $(n, n, \dots,n)$, the sequence with $n$ repeated $k$ times.
\begin{example}\label{ex:SYTeg}
Figure~\ref{fig:SYTeg} illustrates a standard Young tableau $T_{ex}$ of shape $3 \times 4$ (i.e., shape $(4, 4, 4)$). The boxes containing the bolded entries form a subtableau $T_{ex}'$ with $4$ boxes of $T_{ex}$. We will reference this tableau and its subtableau as our running example for the section. \begin{figure}[H]
\centering
\begin{ytableau}
\textbf{1} & \textbf{2} & \textbf{4} & 7 \\
\textbf{3} & 5  & 6 & 8\\
9 & 10  & 11 & 12\\
\end{ytableau}
\caption{A Standard Young Tableau of shape $3 \times 4$.}
\label{fig:SYTeg}
\end{figure}
\end{example}

Given an SYT $T$ of shape $k \times n$, a \textbf{subtableau $T'$ with $n'$ boxes} of $T$ is the standard Young tableau consisting of the boxes of $T$ containing numbers $1,2, \dots n'$.

SYTs are a natural combinatorial setting for defining multidimensional analogs of weighted Catalan numbers. We contend this by observing that the balanced ballot paths and their intermediate points have analogs in terms of SYTs.

Specifically, consider the following known bijection $\beta$ from Stanley’s textbook \cite{MR4621625} from the $k$-dimensional balanced ballot paths of length $kn$ and standard Young tableaux of shape $k \times n$: for each $i$ in $\{1, 2,\dots, kn\}$, if the $i$th step of balanced ballot path $P$ is $\vec{e}_j$, then fill in the right most box in row $j$ with the number $i$.
\begin{example}
Consider the balanced ballot path $P = (\vec{e}_1, \vec{e}_1, \vec{e}_2, \vec{e}_1, \vec{e}_2, \vec{e}_2, \vec{e}_1, \vec{e}_2, \vec{e}_3, \vec{e}_3, \vec{e}_3, \vec{e}_3)$. The corresponding standard Young tableau $\beta(P)$ of shape $k\times n$, is $T$ from Example~\ref{ex:SYTeg}.
\end{example}
Furthermore, there is a bijection $\beta_P'$ between the intermediate points of a balanced ballot path $P$ and the subtableaux of $\beta(P)$: the $i$th intermediate point of $P$ maps to the subtableau with $i$ boxes of $\beta(P)$.
\begin{example} The $4$th intermediate point of $P= (\vec{e}_1, \vec{e}_1, \vec{e}_2, \vec{e}_1, \vec{e}_2, \vec{e}_2, \vec{e}_1, \vec{e}_2, \vec{e}_3, \vec{e}_3, \vec{e}_3, \vec{e}_3)$, which is $\vec{v}_4 = 3\vec{e}_1+3\vec{e}2$, corresponds to the subtableau $T_{ex}'$ from Example~\ref{ex:SYTeg} (the bolded subregion of Figure~\ref{fig:SYTeg}). \end{example}
\subsection{An Approach using Ascents and Descents}\label{sec:PotentialDirection}
We now propose a potential approach for solving Open Problem~\ref{op:probl1}: use ascents and descents of SYTs to define a new statistic and employ the statistic to define weights of SYTs of shape $k \times n$ and generalizations of weighted Catalan numbers.

Ascents, descents, and their variants have been widely studied in the context of SYTs (see, for instance, \cite{MR4190402, MR4867903, MR4621625}). For SYT $T$ of shape $\lambda \vdash n$ and integer  $i \leq n-1$, integer $i$ is a \textbf{descent} of the standard Young tableau $T$ if $i+1$ appears in a lower row than $i$ does. We say that $i$ is an \textbf{ascent} if $i+1$ appears in the same or higher row.

\begin{example}
In the SYT $T_{ex}$ from Example~\ref{ex:SYTeg}, the integer $2$ is a descent because $3$ is in the row below $2$. The integer $3$ is an ascent because $4$ is in the row above.
\end{example}

Using the notion of ascents and descents, we propose a new statistic, which we call the tally, on SYTs. For SYT $T$, we define the \textbf{tally $\tau(T)$ of $T$} to be the number of ascents in $T$ minus the number of descents. \begin{example}\label{ex:SYTtally}
In the SYT $T_{ex}$ from Example~\ref{ex:SYTeg}, the tally of $T_{ex}$ is $\tau(T_{ex}) = 8-4=4$. In $T_{ex}$, the ascents are $1, 3, 5, 6, 9, 10, 11,$ and 12, and the descents are integers $2,4, 7 $ and $8$.
\end{example}

Even though the subtableaux of $\beta(P)$ correspond to intermediate points of $P$ under the bijection $\beta$, the tally of the full tableau does not necessarily vanish (Example~\ref{ex:SYTtally}), unlike the semisymmetric height of the endpoint $(n, n, \dots, n)$ of $P$. This observation suggests any weight function built from the tally will differ from the semisymmetric weight (Definition~\ref{defn:ssweight}).

A notion of weight of tableaux based on the tally statistic may provide an approach to the following concrete version of Open Problem~\ref{op:probl1}.
\begin{openproblem}Using the tally statistic on SYTs of shape $k\times n$, define a $ k$-dimensional generalization
of the weighted Catalan numbers that is equivalent to neither the $k$-dimensional SSWCNs nor the $k$-dimensional u-bounded SSWCNs.\end{openproblem}

\section*{Acknowledgments}

We thank Kenta Suzuki for suggesting the semisymmetric height function for the $k$-dimensional balanced ballot paths. We also thank Alexander Postnikov, Miroslav Marinov, and Tanya Khovanova for insightful conversations. The MIT Department of Mathematics financially supports the first author. The High School Student Institute of Mathematics and Informatics in Bulgaria supports the second author.
\bibliographystyle{plain}

\bibliography{References.bib}

\smallskip

\noindent
Ryota Inagaki \\
\textsc{
Department of Mathematics, Massachusetts Institute of Technology\\
77 Massachusetts Avenue, Building 2, Cambridge, Massachusetts, U.S.A. 02139}\\
\textit{E-mail address: }\texttt{inaga270@mit.edu}
\medskip

\noindent
Dimana Pramatarova \\
\textsc{
``Akademik Kiril Popov” High School of Mathematics }\\
\textit{E-mail address: }\texttt{dimanapramatarova@gmail.com}
\medskip
\end{document}